\newtheorem{definition}{Definition} %
\newtheorem{lemma}{Lemma} % 
\newtheorem{theorem}{Theorem} % 
\newtheorem{corollary}{Corollary} % 
\newtheorem{remark}{Remark} % added.
\newenvironment{proof}{\paragraph{Proof:}}{\hfill$\square$} % newly defined
\title{Zeroth-order Optimization for Composite Problems with Functional Constraints}
\author{Zichong Li$^1$, Pin-Yu Chen$^{2}$, Sijia Liu$^{3}$, Songtao Lu$^{2}$, Yangyang Xu$^{1}$ 
}
\newcommand{\vb}{{\mathbf{b}}}
\newcommand{\vc}{{\mathbf{c}}}
\newcommand{\ve}{{\mathbf{e}}}
\newcommand{\vh}{{\mathbf{h}}}
\newcommand{\vu}{{\mathbf{u}}}
\newcommand{\vv}{{\mathbf{v}}}
\newcommand{\vw}{{\mathbf{w}}}
\newcommand{\vx}{{\mathbf{x}}}
\newcommand{\vy}{{\mathbf{y}}}
\newcommand{\vz}{{\mathbf{z}}}
\newcommand{\vA}{{\mathbf{A}}}
\newcommand{\vH}{{\mathbf{H}}}
\newcommand{\vI}{{\mathbf{I}}}
\newcommand{\vQ}{{\mathbf{Q}}}
\newcommand{\vR}{{\mathbf{R}}}
\newcommand{\cL}{{\mathcal{L}}}
\newcommand{\cN}{{\mathcal{N}}}
\newcommand{\cS}{{\mathcal{S}}}
\newcommand{\cU}{{\mathcal{U}}}
\newcommand{\cX}{{\mathcal{X}}}
\newcommand{\vareps}{\varepsilon}
\newcommand{\EE}{\mathbb{E}} % expectation
\newcommand{\RR}{\mathbb{R}} % real
\newcommand{\ZZ}{\mathbb{Z}} % integer
\newcommand{\vzero}{\mathbf{0}} % 0 vector
\newcommand{\vone}{{\mathbf{1}}} % 1 vector
\newcommand{\dist}{\mathrm{dist}}    % distance
\newcommand{\dom}{{\mathrm{dom}}} % domain
\newcommand{\tr}{{\mathrm{tr}}} % trace
\newcommand{\st}{\mbox{ s.t. }}
\DeclareMathOperator*{\argmin}{arg\,min} % argmin
\DeclareMathOperator*{\Min}{minimize}
\newcommand{\bc}{\begin{center}}
\newcommand{\ec}{\end{center}}
\newcommand{\bdm}{\begin{displaymath}}
\newcommand{\edm}{\end{displaymath}}
\newcommand{\beq}{\begin{equation}}
\newcommand{\eeq}{\end{equation}}
\newcommand{\bfl}{\begin{flushleft}}
\newcommand{\efl}{\end{flushleft}}
\newcommand{\bt}{\begin{tabbing}}
\newcommand{\et}{\end{tabbing}}
\newcommand{\beqn}{\begin{eqnarray}}
\newcommand{\eeqn}{\end{eqnarray}}
\newcommand{\beqs}{\begin{align*}} % no equation numbers
\newcommand{\eeqs}{\end{align*}}  % no equation numbers
\newtheorem{assumption}{Assumption}
\begin{document}
	
	\maketitle
	
	\begin{abstract}
		In many real-world problems, first-order (FO) derivative evaluations are too expensive or even inaccessible. %difficult to obtain analytically. 
		For solving these problems, zeroth-order (ZO) methods that only need function evaluations are often more efficient than FO methods or sometimes the only options. In this paper, we propose a novel zeroth-order inexact augmented Lagrangian method (ZO-iALM) to solve black-box optimization problems, which involve a composite (i.e., smooth+nonsmooth) objective and functional constraints. Under a certain regularity condition (also assumed by several existing works on FO methods), the query complexity of our ZO-iALM is $\tilde{O}(d\vareps^{-3})$ to find an $\vareps$-KKT point for problems with a nonconvex objective and nonconvex constraints, and $\tilde{O}(d\vareps^{-2.5})$ for nonconvex problems with convex constraints, where $d$ is the variable dimension. %To the best of our knowledge, 
		This appears to be the first work that develops an iALM-based ZO method for functional constrained optimization and meanwhile achieves %our ZO 
		query complexity results matching the best-known FO complexity results up to a factor of $d$. With an extensive experimental study, we show the effectiveness of our method. %in achieving faster convergence and higher precision than state-of-the-art methods under a fixed query budget. 
		The applications of our method span from classical optimization problems to practical machine learning examples such as resource allocation in sensor networks and adversarial example generation.
	\end{abstract}
	
	\section{Introduction}
	In many practical optimization problems such as black-box attack \cite{chen2017zoo}, we only have access to zeroth-order (ZO) function values but %don't have 
	no access to first-order (FO) or higher order derivatives \cite{liu2020primer}. %Furthermore, most ZO applications considered in previous works only need a solution that is "good" but not "near-optimal", such as finding an adversarial example to an image classifier \cite{papernot2017practical}, where any image close enough to the original image that changes the prediction label qualifies as an adversarial example, but such a solution may be far from the "optimal" solution which usually means obtaining the highest possible confidence score gap of the predicted label minus the original label. 
	In this paper, %focuses on finding near-KKT solutions to
	we consider \textit{nonconvex} problems with \textit{possibly nonconvex} constraints:
	\begin{equation}\label{eq:ncp} % nonconvex constraint problem.
	f_0^*:=\min_{\vx\in\RR^d} \big\{f_0(\vx) := g(\vx)+h(\vx), \st \vc(\vx) = \vzero \big\},
	\end{equation}
	where $g$ is smooth but possibly nonconvex, $\vc=(c_1, \ldots, c_l):\RR^d\to\RR^l$ is a vector function with continuously differentiable components, and $h$ is closed convex but possibly nonsmooth and has a coordinate structure, i.e., $h(\vx)=\sum_{i=1}^d h(x_i)$. This formulation follows from \cite{li2021rate}, except that in this paper, only the function evaluations of $g$ and $\vc$, but not their gradients, are accessible. Such a formulation includes a large class of nonlinear constrained problems. % including 1111
	We remark that an inequality constraint $t(\vx)\le 0$ can be equivalently formulated as an equality constraint $t(\vx)+s=0$ by enforcing the nonnegativity of $s$, with equivalent stationarity conditions (c.f. \cite{li2021rate}). Note in \eqref{eq:ncp}, the inclusion of a coordinate-separable constraint set $\cX$ is equivalent to the addition of $I_{\cX}$ to the nonsmooth term $h$ in the objective $f_0$, where $I_{\cX}(\vx)=0$ if $\vx \in \cX$ and $+\infty$ otherwise.
	%\sout{In our framework \eqref{eq:ncp}, we assume the objective function is black-box but the functional constraints are known.}
	%$I_{\cX}(\vx) := 
	%\begin{cases}
	%0, \text{ if } \vx \in \cX, \\
	%+\infty, \text{ if } \vx \notin \cX,
	%\end{cases}
	%$. %Such formulation includes a large class of nonlinear constraint problems. 111
	%%%%%%%%%%%%%%%%%
	
	\subsection{Contributions}
	Our contributions are three-fold. 
	%\textcolor{blue}{changed ZO-APCG to ZO-APCU to avoid confusion that we use gradient}.
	First, we design a zeroth-order accelerated proximal coordinate update (ZO-APCU) method for solving coordinate-structured strongly-convex composite (i.e., smooth+nonsmooth) problems. ZO-APCU appears to be the first PCU method \emph{with acceleration} by just using function values. It can be viewed as a ZO variant of the APCG in \cite{lin2014accelerated}. %The use of ZO-APCG leads to more stable numerical performance and larger theoretical step sizes than methods requiring full gradient estimation.
	To solve black-box optimization in the form of \eqref{eq:ncp}, we propose
	a novel zeroth-order inexact augmented Lagrangian method (ZO-iALM), by using ZO-APCU to design a zeroth-order inexact proximal point method (ZO-iPPM) to approximately solve each ALM subproblem. %that has a nonconvex  objective and also possibly nonconvex functional constraints. %To the best of our knowledge, ZOMs on functional constrained problems were not studied in previous works. 
	%In ZO-iALM, we propose  ZO-iPPM is the ZO variant of the iPPM in \cite{li2021rate} that iteratively perturb the nonconvex objective to be strongly convex and inexactly solve the strongly-convex subproblem. 
	Though any ZO method can be used as a subroutine in the iALM, the use of ZO-iPPM with the developed ZO-APCU is crucial to yield best-known query complexity results and also good numerical performance, as we demonstrate in the experiments. %In ZO-iPPM, we use  
	
	Second, query complexity analysis is conducted on the proposed methods. We show that ZO-APCU needs ${O}(d\sqrt{\kappa}\log\frac{1}{\vareps})$ queries to produce an $\vareps$-stationary point of a $d$-dimensional strongly-convex composite problem with a condition number $\kappa$. %to achieve an $\vareps$-stationary solution to an $L$ smooth and $\mu$ strongly-convex problem, we established , 
	The ZO-iPPM has an %$\tilde{O}(\frac{d}{\vareps^2})$ 
	$\tilde{O}(d \vareps^{-2})$ complexity to give an $\vareps$-stationary point of a nonconvex composite problem. On solving \eqref{eq:ncp} that satisfies a regularity condition, the ZO-iALM has an $\tilde{O}(d\vareps^{-3})$ overall complexity to produce an $\vareps$-KKT point, and the complexity can be reduced to $\tilde{O}(d\vareps^{-\frac{5}{2}})$ if the constraints are affine. All our complexity results are (near) optimal or the best known. %compared to existing works. %Specifically, the second result above has a better dependence on smoothness and weak convexity than most existing works. 
	To the best of our knowledge, complexity of ZO methods on nonconvex functional constrained problem \eqref{eq:ncp} has not been studied in the literature, thus our $\tilde{O}(d\vareps^{-3})$ result is completely new.
	%Second, we conduct complexity analysis on ZO-iALM, which requires the analysis of the inner ZO-APCG method. Let $d$ be the variable dimension, to achieve an $\vareps$-stationary solution to an $L$ smooth and $\mu$ strongly-convex problem, we establish an $O \left( d\sqrt{\frac{L}{\mu}} | \log \vareps | \right)$ query complexity for ZO-APCG, which is $O(d)$ worse than the optimal first-order complexity of $O\left( \sqrt{\frac{L}{\mu}} | \log \vareps | \right)$. Based on such result, we establish an $\tilde{O}(d\vareps^{-3})$ query complexity for ZO-iALM if the constraints are nonconvex and an $\tilde{O}(d\vareps^{-\frac{5}{2}})$ query complexity if the constraints are convex %(i.e. convex inequality and affine equality constraints).
	%To the best of our knowledge, this is the first work in studying ALM on ZO problems with functional constraints. 
	%, and our ZO query complexity results matches all the FO best known iteration complexity results multiplied by a factor of $d$. 
	
	Third, we use a coordinate gradient estimator while implementing the core solver ZO-APCU. To be able to yield high-accuracy solutions, we give a multi-point coordinate-wise gradient estimator and analyze its error bound. %propose a zeroth-order multi-point gradient estimator (ZO-MGE). %Such estimator fits under both coordinate gradient and full gradient estimates. 
	Under the $j$-th order smoothness assumption for some $j \in \ZZ^+$, we show that the error of a $\max\{2, 2(j-1)\}$-point coordinate-wise gradient estimator is upper bounded by $O(a^j)$, where $a$ is the sampling radius. %Thus, with merely $O(\log | \eta |)$ queries per coordinate, the gradient estimation error of an infinitely smooth function can be reduced below any $\eta > 0$. 
	This result is meaningful and important to yield high accuracy, because in practice $a$ cannot be too small, otherwise round-off errors will dominate. 
	
	Overall, we conduct a comprehensive study on ZO methods on solving nonconvex functional constrained black-box optimization, from multiple perspectives including complexity analysis, gradient estimator, and significantly improved performance on practical machine learning tasks and classical optimization problems.
	\subsection{Related Works} % Need to rewrite many; make concise & reword!
	
	% split eqns 2-6 into 'background'
	
	In this subsection, we review previous works on the inexact augmented Lagrangian methods (iALMs) in the usual FO settings and the zeroth-order methods (ZOMs). 
	
	The iALM is one of the most common methods for solving constrained problems. It alternatingly updates the primal variable by inexactly minimizing the augmented Lagrangian (AL) function and the Lagrangian multiplier by dual gradient ascent. If the multiplier is fixed to zero, then iALM reduces to a standard penalty method, which usually has a worse practical performance than iALM. Previous works on iALM assume that FO derivatives of the objective function can be evaluated, therefore use FOMs to inexactly minimize the AL function in each primal update. 
	
	For convex nonlinear constrained problems, the iALM in \cite{doi:10.1287/ijoo.2021.0052}
	%\cite{li2020augmented} 
	and the proximal-iALM in \cite{li2021inexact} can produce an $\vareps$-KKT point with $O(\vareps^{-1}|\log\vareps|)$ gradient evaluations, and the AL-based FOMs in \cite{xu2021iteration, xu2021first, ouyang2015accelerated, li2021inexact, nedelcu2014computational} can produce an $\vareps$-optimal solution with $O(\vareps^{-1})$ complexity. For strongly-convex problems, the complexity results can be respectively reduced to $O(\vareps^{-\frac{1}{2}}|\log\vareps|)$ for an $\vareps$-KKT point and $O(\vareps^{-\frac{1}{2}})$ for an $\vareps$-optimal solution, e.g., \cite{doi:10.1287/ijoo.2021.0052, li2021inexact, xu2021iteration, nedelcu2014computational, necoara2014rate}. 
	
	%For nonconvex problems with affine equality constraints, \cite{jiang2019structured} can find an $\vareps$-KKT solution to a similar variant of our problem \eqref{eq:ncp} with $\tilde{O}( \vareps^{-2})$ complexity. Also, \cite{zhang2020global} achieved $O(\vareps^{-2})$ complexity for nonconvex smooth problems with polyhedral constraints. 
	For nonconvex problems with nonlinear convex constraints, when Slater's condition holds, $\tilde O(\vareps^{-\frac{5}{2}})$ complexity results have been obtained by the AL or quadratic-penalty based FOMs in \cite{doi:10.1287/ijoo.2021.0052, lin2019inexact-pp} and the proximal ALM in \cite{meloiteration2020}. When a regularity condition (see Assumption \ref{assumption:regularity} below) holds, %which ensures a near-stationary point to the AL function is near feasible, %\PY{(better state that assumption in plain words rather than cite it)} 
	the ALM in \cite{li2021rate} achieves $\tilde O(\vareps^{-\frac{5}{2}})$ complexity for nonconvex problems with nonlinear convex constraints and $\tilde O(\vareps^{-3})$ complexity for problems with nonconvex constraints.
	
	When gradients of the objective function are unavailable, ZOMs are the only tools available. Previous ZO works mainly focus on problems \textit{without} nonlinear functional constraints. Many existing ZOMs are modified from some gradient descent type FOMs, replacing the exact gradient $ \nabla f(\vx) $ by some gradient estimator $\tilde{\nabla} f(\vx)$. In section~\ref{sec:multi_pt}, we briefly review some existing gradient estimation frameworks including random search and finite difference. A more detailed overview can be found in \cite{liu2020primer} for ZOMs.
	
	%%%%%%%%%%%%%%%%%%%%%%%%%%%%%%%%%%%%%%%%%%%%
	
	%%%%%%%%%%%%%%%%%%%%%%%%%%%%%%%%%%%%%%%%%%
	\subsection{Notations, Definitions, and Assumptions}
	We use $\Vert \cdot \Vert$ for the Euclidean norm of a vector and the spectral norm of a matrix. %For a positive integer, 
	$[n]$ denotes the set $\{1,\ldots,n\}$. We use $\tilde{O}$ to suppress all log terms of $\vareps$ from the big-$O$ notation. We denote $J_c(\vx)$ as the Jacobian of $\vc$ at $\vx$. %For any two vectors $\va$ and $\vb$ in $\RR^n$, $\va\ge \vb$ means $a_i\ge b_i$ for any $i\in[n]$, $\max\{\va,\vb\}$ denotes the vector by component-wise maximization. $[\va]_+$ is short for $\max\{\va,\vzero\}$.
	The distance between a vector $\vx$ and a set $\cX$ is denoted as $\dist(\vx, \cX) = \min_{\vy \in \cX} \Vert \vx-\vy \Vert$. For a function $f$, we use $\partial f$ to denote the subdifferetial of $f$. For a differentiable function $f$, we use $\tilde\nabla f$ as an estimator of the gradient $\nabla f$. 
	The AL function of \eqref{eq:ncp} is
	\begin{equation}\label{eq:AL}
	\textstyle \cL_{\beta}(\vx,\vy) = f_0(\vx) + \vy^\top \vc(\vx) + \frac{\beta}{2} \Vert \vc(\vx) \Vert^2,
	\end{equation} 
	where $\beta > 0$ is the penalty parameter, and $\vy \in \RR^l$ is the multiplier or the dual variable. % 1111
	
	\begin{definition}[$\vareps$-KKT point]\label{def:eps-kkt}
		A point $\vx \in \RR^d$ is an $\vareps$-KKT point of \eqref{eq:ncp} if there is $\vy\in\RR^l$ such that
		\begin{equation}\label{eq:kkt}
		\Vert \vc(\vx) \Vert \leq \vareps,\quad
		\dist\left(\vzero, \partial f_0(\vx)+J_c^\top(\vx) \  \vy \right) \leq \vareps.
		\end{equation}
	\end{definition}
	
	\iffalse
	\begin{definition} \label{def:grad_err}
		Let $\tilde{\nabla} G$ be a zeroth-order estimate of $\nabla G,\st $\\
		$| \tilde{\nabla}_i G(\vx) - \nabla_i G(\vx) | \leq E_i$ and $\Vert \tilde{\nabla} G(\vx) - \nabla G(\vx) \Vert \le E, \forall \vx \in \RR^d $.
	\end{definition}
	\fi
	
	%\begin{definition}
	%Let $F_i(\vx_i) := f(\vx_{<i}, \vx_i, \vx_{>i})$, denote the partial function of $f$ that only depends on coordinate $i$. 
	%\end{definition}
	%\begin{definition}[coordinate $k$-differentiable]
	%We say $f$ is coordinate $k$-differentiable, if $F_i$ is $k$-th differentiable, $\forall i \in [d]$.
	%\end{definition}
	\begin{definition}[$k$-smoothness]
		%Assume $f$ is $k$-th differentiable. 
		For some $k \ge 1$, we say $f$ is $M_k$ $k$-smooth, if the $k$-th derivative of $f$ is $M_k$ Lipschitz continuous. %$ | \nabla^k_i f(\vx) | \leq  M_k, \forall \vx \in \RR^d$.
	\end{definition}
	\begin{remark}
		Letting $k = 1$ above corresponds to the standard smoothness assumption.
	\end{remark}
	
	\begin{definition}[coordinate $k$-smooth]
		%Assume $f$ is coordinate $k$-differentiable. 
		For some $k \ge 1$, we say $f$ is $M_k$ coordinate $k$-smooth, if the partial function $F_i(\vx_i) := f(\vx_{<i}, \vx_i, \vx_{>i})$ is $M_k$ $k$-smooth, $\forall\, i \in [d]$, where $\vx_{<i} := (x_1, \dots, x_{i-1})$ and $\vx_{>i} := (x_{i+1}, \dots, x_{d})$ are fixed.
	\end{definition}
	\begin{remark}
		If $f$ is $M_k$ $k$-smooth, then it must be $M^c_k$ coordinate $k$-smooth with some $M^c_k \leq M_k$.
	\end{remark}
	
	\begin{definition}[$\rho$-weakly convex]
		A function $f$ is $\rho$-weakly convex if $f+\frac{\rho}{2}\Vert \cdot \Vert^2$ is convex.
	\end{definition}
	\begin{remark}
		A function that is $L$-smooth is also $L$-weakly convex. However, its weak convexity constant can be much smaller than its smoothness constant.
	\end{remark}
	
	Throughout this paper, we make the following assumptions. % the following assumptions. %about \eqref{eq:ncp}.
	\begin{assumption}[smoothness and weak convexity]\label{assump:smooth-wc}
		In \eqref{eq:ncp}, $g$ is $L_0$-smooth and $\rho_0$-weakly convex. For each $j \in [l]$, $c_j$ is $L_j$-smooth and $\rho_j$-weakly convex.
	\end{assumption}
	
	%\iffalse %%% original composite assumption
	\begin{assumption}[bounded domain]\label{assump:composite}
		In \eqref{eq:ncp}, $h$ is closed convex with a compact domain, i.e.,
		\begin{subequations}
			\begin{equation}\label{eq:bounded}
			D := \max_{\vx,\vx' \in \dom(h)} \Vert \vx-\vx' \Vert < \infty,
			\end{equation}
			\begin{equation}\label{eq:bounded_coor}
			D_i := \max_{\substack{\vx,\vx' \in \dom(h) \\ \vx_{[d] \backslash i} = \vx'_{[d] \backslash i} }} \Vert \vx-\vx' \Vert \le D, \forall i \in [d],
			\end{equation}
		\end{subequations}
		where $\vx_{[d] \backslash i} = (\vx_1, \dots, \vx_{i-1}, \vx_{i+1}, \dots, \vx_d)$. 
	\end{assumption}
	%\fi %%%
	Due to the page limit, all proofs in this paper are given in the appendix.
	\section{Multi-point Gradient Estimator}\label{sec:multi_pt}
	
	%\PY{(We need to say all proofs are given in Appendix)}
	
	In this section, we provide backgrounds on gradient estimators and propose the zeroth-order multi-point coordinate gradient estimator.
	
	\subsection{Backgrounds on Gradient Estimators}
	\label{subsec_background}
	Let $a$ denote the \emph{sampling radius} (also called the \emph{smoothing parameter}) of a random gradient estimator, and $\vu \sim p$ denote some random direction sampled from a distribution $p$. Denote $f_a$ as the \emph{smoothed} version of $f$ defined as
	%\begin{equation}\label{eq:f_a}
	$f_a(\vx) := \EE_{\vu \sim p'} [f(\vx+a \vu)],$
	%\end{equation}
	where $p'$ is a certain distribution determined by $p$.
	%\PY{(need to define p')}
	All random gradients in this subsection are biased with respect to $\nabla f$ but unbiased with respect to $\nabla f_a$, satisfying
	%\begin{equation}\label{eq:unbiased_fa}
	$\EE_{\vu \sim p} [\tilde{\nabla} f(\vx)] = \nabla f_a (\vx)$,
	%\end{equation}
	
	The \emph{$1$-point random gradient estimator} of $f$ has the form 
	\begin{equation}\label{eq:1pt_grad}
	\textstyle  \tilde{\nabla} f(\vx) := \frac{\phi(d)}{a} f(\vx+a \vu) \vu,
	\end{equation}
	where $\phi(d)$ is a dimension-dependent factor given by the distribution of $\vu$. If $p = \cN(\vzero, \vI)$, then $\phi(d) = 1$; if $p = \cU(\cS(\vzero, \vI))$ is the uniform distribution on the unit sphere, then $\phi(d) = d$. In practice, the $1$-point estimator in \eqref{eq:1pt_grad} is not commonly used due to high variance \cite{flaxman2004online}. % 2 point random search:
	This motivates the \emph{$2$-point random gradient estimator} \cite{nesterov2017random, duchi2015optimal}
	\begin{equation}\label{eq:2pt_grad}
	\textstyle \tilde{\nabla} f(\vx) := \frac{\phi(d)}{a} (f(\vx+a \vu)-f(\vx)) \vu,
	\end{equation}
	where $\EE_{\vu \sim p}[\vu] = \vzero$ is required for unbiasedness to hold. The $2$-point estimator has the following upper bound of expected estimation error %with respect to $\nabla f$ 
	\cite{berahas2021theoretical, liu2018zeroth}
	\begin{equation}\label{eq:2pt_grad_err}
	\textstyle \EE [\Vert \tilde{\nabla} f(\vx)-\nabla f(\vx) \Vert] = O(\sqrt{d}) \Vert \nabla f(\vx) \Vert + O \left( \frac{ad^{1.5}}{\phi(d)} \right).
	\end{equation}
	%Emphasize 1) a can't be too small due to round-off; 2) even if a->0, variance can't be eliminated (from O(\sqrt{d}) \Vert \nabla f(\vx) \Vert). =>  % multi point (mini-batch) random search:
	Note that the $O(\sqrt{d}) \Vert \nabla f(\vx) \Vert$ term in \eqref{eq:2pt_grad_err} does not vanish even if $a \rightarrow 0$. Mini-batch sampling %which generalizes the $2$-point estimator in \eqref{eq:2pt_grad}
	can be used to reduce the estimation error, leading to the \emph{multi-point random gradient estimator} \cite{duchi2015optimal,liu2018zeroth}
	\begin{equation}\label{eq:multi_grad}
	\tilde{\nabla} f(\vx) := \frac{\phi(d)}{a} \sum_{i=1}^{b} (f(\vx+a \vu_i)-f(\vx)) \vu_i,
	\end{equation}
	where $b$ is the mini-batch size, and $\{\vu_i\}_{i=1}^{b}$ are random directions drawn from some zero-mean distribution $p$. The multi-point estimator has the improved error bound \cite{berahas2021theoretical}
	\begin{align*}%\label{eq:multi_grad_err}
	\textstyle 
	&\EE [\Vert \tilde{\nabla} f(\vx)-\nabla f(\vx) \Vert] \nonumber \\
	= & \ O \left( \sqrt{\frac{d}{b}} \right) \Vert \nabla f(\vx) \Vert + O \left( \frac{ad^{1.5}}{\phi(d)b} \right) + O \left( \frac{ad^{0.5}}{\phi(d)} \right).
	\end{align*} % forward & center difference:
	To further reduce the estimation error, one can use \emph{coordinate-wise gradient} which requires $O(d)$ queries per gradient estimate. Existing works use \emph{forward difference} $\tilde{\nabla} f(\vx) := \frac{1}{a} \sum_{i=1}^{d} (f(\vx+a\ve_i)-f(\vx))\ve_i$ or \emph{central difference} $\tilde{\nabla} f(\vx) := \frac{1}{2a} \sum_{i=1}^{d} (f(\vx+a\ve_i)-f(\vx-a\ve_i))\ve_i$ as the coordinate gradient, where $\ve_i$ is the $i$th basis vector. Under the standard smoothness assumption, both forward difference and central difference have the error bounds \cite{kiefer1952stochastic, berahas2021theoretical, lian2016comprehensive}
	%\begin{subequations}\label{eq:coor_err_1smooth}
	\begin{align*}%\label{eq:coor_err_1smooth}
	&\EE [| \tilde{\nabla}_i f(\vx)-\nabla_i f(\vx) |] = O ( a ),\\ 
	&\EE [\Vert \tilde{\nabla} f(\vx)-\nabla f(\vx) \Vert] = O \left( a\sqrt{d} \right).
	\end{align*}
	%\end{subequations}
	
	\subsection{Zeroth-order Multi-point Coordinate Gradient Estimator}
	In this subsection, assuming $f$ to be coordinate $p$-smooth, we construct the zeroth-order multi-point coordinate gradient estimator (ZO-MCGE) $\tilde\nabla_i f(\vx)$ with $p = \max\{2(j-1), 2\}$ function value queries at $\vx+\frac{p}{2}a\ve_i, \dots, \vx+a\ve_i, \vx-a\ve_i, \dots, \vx-\frac{p}{2}a\ve_i$, and analyze its error bound. The main difference between our proposed ZO-MCGE and the estimators in Section \ref{subsec_background} is that the use of multi-point function evaluation allows for a better control for the gradient estimation error. We  observe numerically that using more points in the gradient estimator enables us to reach a higher accuracy; %{\color{blue}how high is high? maybe we can say "a higher accuracy, e.g., $10^{-XXX}$" and please see more details in ...} %than using the $2$-point gradient estimator 
	see the logistic regression experiment in the Appendix.
	%\textcolor{blue}{PY: The main difference between our proposed ZO-MGE and the estimators in Section \ref{subsec_background} is that the use of multi-point function evaluation allows for a better control for the gradient estimation error.}
	
	The following lemma directly follows from the coordinate $j$-smoothness of $f$. 
	
	\begin{lemma}\label{lemma:smooth_ineq} % added 05/21.
		Assume $f$ is $M_j$ coordinate $j$-smooth. Let $\nabla^l_i f(\vx) := \frac{\partial^l f(\vx)}{(\partial x_i)^l}$ be the $l$-th order derivative of $f$ at $\vx$ with respect to $x_i$. Then 
		\begin{align}\label{eq:smooth_ineq}
		&\left| f(\vx+b\ve_i) - f(x) - b \nabla_i f(\vx) - \dots - \frac{b^j}{j!}\nabla_i^j f(\vx) \right| \nonumber \\ 
		\le & \ \frac{M_j}{(j+1)!} |b|^{j+1}, \forall\, \vx \in \RR^d,\text{ and } b \in \RR.
		\end{align}
	\end{lemma}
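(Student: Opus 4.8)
The plan is to reduce \eqref{eq:smooth_ineq} to a one-dimensional Taylor estimate along the $i$-th coordinate and then exploit the Lipschitz continuity of the $j$-th derivative granted by coordinate $j$-smoothness.

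First I would fix $\vx \in \RR^d$ and $i \in [d]$ and define the scalar function $\phi(t) := f(\vx + t\ve_i)$, which equals $F_i(x_i + t)$ with all other coordinates frozen. By the coordinate $j$-smoothness assumption, $\phi$ is $j$-times differentiable with $\phi^{(l)}(t) = \nabla_i^l f(\vx + t\ve_i)$ for $0 \le l \le j$, and $\phi^{(j)}$ is $M_j$-Lipschitz. In particular $\phi(0) = f(\vx)$ and $\phi^{(l)}(0) = \nabla_i^l f(\vx)$, so the left-hand side of \eqref{eq:smooth_ineq} is exactly the absolute value of the order-$j$ Taylor remainder
\[
R_j(b) := \phi(b) - \sum_{l=0}^{j} \frac{b^l}{l!}\,\phi^{(l)}(0).
\]

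Next I would express this remainder in a form that uses only $\phi^{(j)}$, since we cannot assume $\phi^{(j+1)}$ exists everywhere. Because $\phi^{(j)}$ is Lipschitz, hence absolutely continuous, Taylor's theorem with integral remainder gives $\phi(b) - \sum_{l=0}^{j-1}\frac{b^l}{l!}\phi^{(l)}(0) = \frac{1}{(j-1)!}\int_0^b (b-s)^{j-1}\phi^{(j)}(s)\,ds$. Subtracting the last term $\frac{b^j}{j!}\phi^{(j)}(0)$ and using the identity $\frac{b^j}{j!} = \frac{1}{(j-1)!}\int_0^b (b-s)^{j-1}\,ds$, I would rewrite the remainder in the \emph{centered} form
\[
R_j(b) = \frac{1}{(j-1)!}\int_0^b (b-s)^{j-1}\big[\phi^{(j)}(s) - \phi^{(j)}(0)\big]\,ds.
\]

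Finally, applying the Lipschitz bound $|\phi^{(j)}(s) - \phi^{(j)}(0)| \le M_j|s|$ and evaluating the elementary Beta-type integral $\int_0^{|b|}(|b|-s)^{j-1}s\,ds = \frac{|b|^{j+1}}{j(j+1)}$ yields $|R_j(b)| \le \frac{M_j}{(j-1)!}\cdot\frac{|b|^{j+1}}{j(j+1)} = \frac{M_j}{(j+1)!}|b|^{j+1}$, which is the claim. The main obstacle is regularity: the usual Lagrange remainder would invoke a pointwise $(j+1)$-th derivative that need not exist under mere Lipschitz continuity of $\phi^{(j)}$. The centered integral rewriting above circumvents this, using only $\phi^{(j)}$ together with the Lipschitz estimate. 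One must also handle the orientation of the integral when $b < 0$, which is resolved by taking absolute values throughout; the integrand bound is symmetric under $(s,b) \mapsto (-s,-b)$, so the same $|b|^{j+1}$ bound holds for either sign of $b$.
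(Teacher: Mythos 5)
Your proof is correct. The paper itself gives no argument for this lemma --- it is stated as something that ``directly follows from the coordinate $j$-smoothness of $f$'' and no proof appears in the appendix --- so there is no alternative derivation to compare against; your write-up simply supplies the standard Taylor-remainder argument that the authors are implicitly invoking. The one genuinely worthwhile detail you add is the regularity point: since coordinate $j$-smoothness only gives Lipschitz continuity of $\phi^{(j)}$ (not existence of $\phi^{(j+1)}$), the Lagrange form of the remainder is unavailable, and your centered integral form
\[
R_j(b) = \frac{1}{(j-1)!}\int_0^b (b-s)^{j-1}\bigl[\phi^{(j)}(s) - \phi^{(j)}(0)\bigr]\,ds
\]
together with $\bigl|\phi^{(j)}(s)-\phi^{(j)}(0)\bigr|\le M_j|s|$ and $\int_0^{|b|}(|b|-s)^{j-1}s\,ds = \frac{|b|^{j+1}}{j(j+1)}$ is exactly the right way to get the constant $\frac{M_j}{(j+1)!}$ without overreaching on differentiability. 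The sign handling for $b<0$ is also fine.
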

	
	Let $a$ be the sampling radius. The following theorem states how to estimate the coordinate gradient $\nabla_i f(\cdot)$ of a $M_j$ coordinate $j$-smooth function $f$ by $p = \max\{2(j-1), 2\}$ queries at $\vx+\frac{p}{2}a\ve_i, \dots, \vx+a\ve_i, \vx-a\ve_i, \dots, \vx-\frac{p}{2}a\ve_i$, and provides the error bound.
	
	\begin{theorem}[multi-point coordinate gradient estimator]\label{thm:coor_grad_coef}
		Assume $f$ is $M_j$ coordinate $j$-smooth for some $j \in \ZZ^+$. Let $p = \max\{2(j-1),2\}$ and $m = \frac{p}{2}$. Define the $p$-point coordinate gradient estimator of $f$ with respect to some $i \in [d]$ as 
		\begin{align}\label{eq:coor_grad_coef}
		\textstyle \tilde{\nabla}_i f(\vx) := \ & C_{\frac{p}{2}} f(\vx+\frac{p}{2}ae_i) + \dots + C_1 f(\vx+ae_i) \nonumber \\ 
		&\hspace{-0.8cm}- C_1 f(\vx-ae_i) - \dots - C_{\frac{p}{2}} f(\vx-\frac{p}{2}ae_i),
		\vspace{-5pt}
		\end{align}
		\vspace{-5pt}
		where  
		\[
		\begin{bmatrix}
		C_1\\
		C_2\\
		\vdots\\
		C_{\frac{p}{2}}\\
		\end{bmatrix}
		=
		\begin{bmatrix}
		1&2&\cdots&\frac{p}{2}\\
		1&2^3&\cdots&(\frac{p}{2})^3\\
		&&\vdots&\\
		1&2^{p-1}&\cdots&(\frac{p}{2})^{p-1}\\
		\end{bmatrix}
		^{-1}
		\begin{bmatrix}
		\frac{1}{2a}\\
		0\\
		\vdots\\
		0\\
		\end{bmatrix}.
		\] \\
		Then we have the following error bound %\textcolor{red}{Typo below? The bound does not depend on $i$?}
		\begin{equation}\label{eq:grad_err_bd}
		| \tilde{\nabla}_i f(\vx) - \nabla_i f(\vx) | \le \sum_{q=1}^{m} |C_q| \frac{M_j q^{j+1}}{(j+1)!}a^{j+1}.
		\end{equation}
		
		\iffalse
		Then
		\begin{equation} %\label{eq:coef}
		C_{\frac{k}{2}} f(\vx+\frac{k}{2}ae_i) + \dots + C_1 f(\vx+ae_i) - C_1 f(\vx-ae_i) - \dots - C_{\frac{k}{2}} f(\vx-\frac{k}{2}ae_i) = \nabla_i f(\vx) + O(a^k \nabla_i^{k+1}f(\vx)) + O(a^{k+2}). 
		\end{equation}
		\fi
	\end{theorem}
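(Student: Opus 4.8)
The plan is to Taylor-expand $f$ along the $i$-th coordinate at the $2m$ sample points $\vx \pm qa\ve_i$ using Lemma~\ref{lemma:smooth_ineq}, exploit the antisymmetric structure of the estimator \eqref{eq:coor_grad_coef} to cancel every even-order coordinate derivative automatically, and then use the defining linear system for $\{C_q\}$ to annihilate the surviving odd-order derivatives of orders $3,5,\dots$ while normalizing the first-order term so that it reproduces $\nabla_i f(\vx)$ exactly. What is left is a weighted combination of Taylor remainders, which Lemma~\ref{lemma:smooth_ineq} bounds at order $a^{j+1}$.

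Concretely, I would first fix $\vx$ and $i$ and, for each $q\in\{1,\dots,m\}$, apply \eqref{eq:smooth_ineq} with $b=qa$ and $b=-qa$ to write $f(\vx\pm qa\ve_i)=f(\vx)+\sum_{l=1}^{j}\frac{(\pm qa)^l}{l!}\nabla_i^l f(\vx)+R_q^{\pm}$, where $|R_q^{\pm}|\le \frac{M_j}{(j+1)!}(qa)^{j+1}$. Subtracting the two expansions, the even-order terms (for which $(+qa)^l=(-qa)^l$) cancel, leaving $f(\vx+qa\ve_i)-f(\vx-qa\ve_i)=2\sum_{\mathrm{odd}\ l\le j}\frac{(qa)^l}{l!}\nabla_i^l f(\vx)+(R_q^{+}-R_q^{-})$. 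Plugging this into \eqref{eq:coor_grad_coef} and interchanging the two finite sums gives $\tilde\nabla_i f(\vx)=2\sum_{\mathrm{odd}\ l\le j}\frac{a^l}{l!}\nabla_i^l f(\vx)\big(\sum_{q=1}^m C_q q^l\big)+\sum_{q=1}^m C_q(R_q^{+}-R_q^{-})$.

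The heart of the argument is that the linear system defining $\{C_q\}$ is exactly the set of conditions $\sum_{q} C_q q=\frac{1}{2a}$ and $\sum_q C_q q^l=0$ for every odd $l\in\{3,5,\dots,2m-1\}$. I would verify that this index set covers all odd orders $l$ with $1\le l\le j$ occurring in the truncated expansion: the largest such odd order is $j$ (if $j$ is odd) or $j-1$ (if $j$ is even), and the choice $m=\frac{p}{2}=\max\{j-1,1\}$ makes $2m-1$ at least this value, which is precisely why $p=\max\{2(j-1),2\}$ points are used. Consequently the $l=1$ term contributes $2a\cdot\frac{1}{2a}\,\nabla_i f(\vx)=\nabla_i f(\vx)$ and every higher odd-order term vanishes, so $\tilde\nabla_i f(\vx)-\nabla_i f(\vx)=\sum_{q=1}^m C_q(R_q^{+}-R_q^{-})$. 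I would also record that $\{C_q\}$ is well defined: the coefficient matrix has entries $q^{2r-1}$, which factors as a column-scaled Vandermonde matrix in the distinct nodes $1^2,2^2,\dots,m^2$ and is therefore invertible.

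The main obstacle is the final remainder estimate. Applying the triangle inequality and the bound $|R_q^{\pm}|\le \frac{M_j}{(j+1)!}(qa)^{j+1}$ to $\big|\sum_{q} C_q(R_q^{+}-R_q^{-})\big|$ yields $\sum_{q=1}^m |C_q|\frac{M_j q^{j+1}}{(j+1)!}a^{j+1}$ up to a small constant, which is the claimed bound \eqref{eq:grad_err_bd}. The delicate point is to bound the antisymmetric remainder $R_q^{+}-R_q^{-}$ sharply: a crude split into $|R_q^{+}|+|R_q^{-}|$ costs a factor of two, so pinning down the exact constant in \eqref{eq:grad_err_bd} requires estimating $R_q^{+}-R_q^{-}$ directly through the integral form of the Taylor remainder and the $M_j$-Lipschitz continuity of $\nabla_i^j f$. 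Since $|C_q|=O(1/a)$, the resulting bound is $O(a^{j})$, consistent with the stated order of accuracy.
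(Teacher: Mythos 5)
Your argument is essentially the paper's own proof: Taylor-expand at the $2m$ symmetric points via Lemma~\ref{lemma:smooth_ineq}, cancel the even orders by antisymmetry, use the linear system defining $\{C_q\}$ to annihilate the odd orders $3,\dots,2m-1$ and normalize the first-order term to $\nabla_i f(\vx)$, and bound the leftover remainders by the triangle inequality (your Vandermonde-in-$q^2$ check of invertibility is a nice addition the paper omits). The factor-of-two issue you flag is genuine but is not resolved in the paper either --- summing the $2m$ one-sided remainder bounds yields $2\sum_{q=1}^{m}|C_q|\frac{M_j q^{j+1}}{(j+1)!}a^{j+1}$, the integral form of the remainder does not remove the $2$ (for $j=1$ the example $f(x)=\frac{M_1}{2}x|x|$ with central differencing at $0$ has error exactly $\frac{M_1}{2}a$, twice the stated bound, matching the constant quoted in the paper's own remark), so \eqref{eq:grad_err_bd} should carry an extra factor of $2$, which does not affect the claimed $O(a^{j})$ order.
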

	
	\iffalse
	\begin{proof} % This was proof for old thm. Deleted.
		\begin{align*}
		& C_{\frac{k}{2}} f(\vx+\frac{k}{2}ae_i) + \dots + C_1 f(\vx+ae_i) - C_1 f(\vx-ae_i) - \dots - C_{\frac{k}{2}} f(\vx-\frac{k}{2}ae_i)\\ 
		= \ & 2C_1 [a\nabla_i f(\vx)+\frac{a^3}{3!}\nabla^3_i f(\vx)+\dots+\frac{a^{k-1}}{(k-1)!}\nabla^{k-1}_i f(\vx)]\\
		&+  2C_2 [2a\nabla_i f(\vx)+\frac{2^3 a^3}{3!}\nabla^3_i f(\vx)+\dots+\frac{2^{k-1}a^{k-1}}{(k-1)!}\nabla_i^{k-1}f(\vx)] + \dots\\
		&+  2C_{\frac{k}{2}} [\frac{k}{2}a \nabla_i f(\vx) + (\frac{k}{2})^3 \frac{a^3}{3!} \nabla^3_i f(\vx) + \dots + (\frac{k}{2})^{k-1} \frac{a^{k-1}}{(k-1)!}\nabla^{k-1}_i f(\vx)]\\
		&+  O(a^k \nabla^{k+1}_i f(\vx)) + O(a^{k+2})\\
		= \ & (C_1+2C_2+\dots+\frac{k}{2}C_{\frac{k}{2}})2a\nabla_i f(\vx) + (C_!+2^3C_2+\dots+(\frac{k}{2})^{3}C_{\frac{k}{2}})\frac{2a^3}{3!}\nabla^3_i f(\vx)\\ 
		&+ \dots + (C_!+2^{k-1}C_2+\dots+(\frac{k}{2})^{k-1}C_{\frac{k}{2}})\frac{2a^{k-1}}{(k-1)!}\nabla^{k-1}_i f(\vx) + O(a^k \nabla_i^{k+1}f(\vx)) + O(a^{k+2}).\\
		= \ & \nabla_i f(\vx) + O(a^k \nabla_i^{k+1}f(\vx)) + O(a^{k+2})
		\end{align*}
		where the last equality above holds due to our choice of coefficients.
	\end{proof}
	\fi
	
	\begin{remark}
		Theorem \ref{thm:coor_grad_coef} implies that if $f$ is $M_1$ coordinate $1$-smooth (which holds if $f$ is $M_1$-smooth in the standard notion) or $M_2$ coordinate $2$-smooth, then the coordinate gradient estimator given in \eqref{eq:coor_grad_coef} corresponds to the central difference $\tilde{\nabla}_i f(\vx) = \frac{1}{2a} (f(\vx+ae_i)-f(\vx-ae_i))$, with error bounds of $\frac{M_1}{2}a$ and $\frac{M_2}{6}a^2$ respectively, because $C_1=\frac{1}{2a}$.
		
		In general, we establish that under the $j$-th order smoothness assumption for some $j \in \ZZ^+$, %that is not too large, 
		the error of the $\max\{2, 2(j-1)\}$-point coordinate gradient estimator is upper bounded by $O(a^j)$, where $a$ is the sampling radius. %\textcolor{red}{Explain the benefits and demonstrate via experiments.}
	\end{remark}
	%%%%%%%%%%%%%%%%%%%%%%%%%%%%%%%%%%%%%%%%%%%%%
	%\newpage % can't use in AAAI
	\section{A Novel AL-based ZOM}\label{sec:alg}
	% "ZO-iALM: zeroth order inexact augmented Lagrangian method"
	In this section, we present a novel ZOM for solving \eqref{eq:ncp} under the ALM framework, with each ALM subproblem approximately solved by an inexact proximal point method (iPPM).  %(see Algorithm~\ref{alg:ialm} below) with an ALM framework similar to AL-based FOMs \cite{li2021rate, sahin2019inexact, xu2021iteration}. Different from most existing works, we use an zeroth-order inexact proximal point method (ZO-iPPM) to approximately solve each ALM subproblem. Our whole algorithm has three layers. We analyze the outer Algorithm~\ref{alg:ialm} in Section~\ref{sec:ialm}, the middle Algorithm~\ref{alg:ippm} in Section~\ref{sec:ippm}, and the inner Algorithm~\ref{alg:apcg} in Section~\ref{sec:apcg}.
	
	%To describe , %and the later analysis, 
	The pseudocode of our AL-based ZOM for \eqref{eq:ncp} is shown in Algorithm~\ref{alg:ialm} that uses the following notations 
	\begin{subequations}\label{P3-eqn-33}
		\begin{align}
		&B_0=\max_{\vx\in\dom(h)}\max \big\{|f_0(\vx)|,\left\Vert \nabla g(\vx) \right\Vert\big\}, \nonumber \\ 
		&B_c = \max_{\vx\in\dom(h)}\Vert J_c(\vx) \Vert, \\
		&B_i=\max_{\vx\in\dom(h)}\max \big\{|c_i(\vx)|,\left\Vert \nabla c_i(\vx) \right\Vert\big\}, \forall\, i \in [l], \\
		& \textstyle \bar{B}_c = \sqrt{\sum_{i=1}^l B_i^2},\quad \bar{L} = \sqrt{\sum_{i=1}^lL_i^2}, \nonumber \\ 
		& \rho_c = \sum_{i=1}^{l} B_i \rho_i, \quad L_c = \sum_{i=1}^{l}B_i L_i + B_i^2, \label{eq:rho_c}
		\end{align}
	\end{subequations}
	where $\{\rho_i\}$ and $\{L_i\}$ are given in Assumption~\ref{assump:smooth-wc}. %Note that the above constants are all finite under Assumptions~\ref{assump:smooth-wc} and \ref{assump:composite}, and we do not need to evaluate them exactly but only need upper bounds.
	
	\begin{algorithm}[h] 
		\caption{Zeroth-order inexact augmented Lagrangian method (ZO-iALM) for \eqref{eq:ncp}}\label{alg:ialm}
		\DontPrintSemicolon
		%\LinesNotNumbered % added.
		%\LinesNumberedHidden % added.
		\textbf{Initialization:} choose $\vx^0\in\dom(f_0), \vy^0 = \vzero$, $\beta_0>0$ and $\sigma>1$\;
		\For{$k=0,1,\ldots,$}{
			Let $\beta_k=\beta_0\sigma^k$, $\phi(\cdot)=\cL_{\beta_k}(\cdot,y^k)-h(\cdot)$, and
			\begin{equation}\label{eq:hat_rho_k}
			\begin{aligned}
			&\hat{\rho}_k = \rho_0 + \bar L\|\vy^k\| + \beta_k \rho_c, \\ 
			&\hat{L}_k = L_0 + \bar L\|\vy^k\| + \beta_k L_c.
			\end{aligned} \vspace{-0.3cm} % can't generally use in AAAI (with exceptions in algorithm?)
			\end{equation}\;
			%Call Algorithm~\ref{alg:ippm} to obtain 
			$\vx^{k+1} \leftarrow \text{ZO-iPPM}(\phi,h,\vx^k,\hat{\rho}_k, \hat{L}_k, \vareps)$\;
			Update $\vy$ by
			\begin{align}
			\vy^{k+1} = ~\vy^k  + w_k \vc(\vx^{k+1}).\label{eq:alm-y}
			\end{align}
			%where 
			%\begin{equation}\label{eq:ialm_step}
			%\textstyle w_k = w_0 \min\left\{1, \frac{\gamma_k }{\Vert \vc(\vx^{k+1}) \Vert}\right\}.
			%\end{equation}
		}
		\setcounter{AlgoLine}{0}
		\SetKwProg{myproc}{subroutine}{}{}
		\myproc{\emph{ZO-iPPM{$(\phi,\psi,\vx^0,\rho, L_\phi,\vareps)$}}}{
			\For{$t=0,1,\ldots,$}{
				Let $G(\cdot)=\phi(\cdot)+\rho\Vert \cdot - \vx^t \Vert^2$\; 
				%Call Algorithm~\ref{alg:apcg} to obtain $\vx^{k+1} \leftarrow \text{ZO-APCU}(G, \psi, \rho, L_\phi+2\rho, \frac{\vareps}{4})$
				Obtain $\vx^{t+1}$ by a ZOM such that $\dist(\vzero, \partial(G+\psi)(\vx^{t+1}))\le \frac{\vareps}{4}$\;
				\textbf{if} $2\rho \Vert \vx^{t+1}-\vx^t \Vert \le \frac{\vareps}{2}$, \textbf{then} return $\vx^{t+1}$.
			}
		}
	\end{algorithm}
	
	Notice that Algorithm~\ref{alg:ialm} follows the standard framework of the ALM and uses ZO-iPPM to solve each ALM subproblem. In principle, one can use any ZOM as a subroutine to solve ALM subproblems, such as ZO-AdaMM \cite{chen2019zo} and ZO-proxSGD \cite{ghadimi2016mini}. However, the use of ZO-iPPM (together with our developed zeroth-order accelerated proximal coordinate update) not only leads to best known complexity results, but also gives better numerical performance, as we show in Section~\ref{sec:num}. %on solving black-box nonconvex composite problems than other ZOMs including  (see Section \ref{sec:num}). %(see the numerical results section). 
	
	%\begin{remark}
	%{\color{blue} can we compress this paragraph. Unless the reviewers asked this point, otherwise I think we need to emphasise this as a ramark. We may mention it in the algorithm description.}
	The proposed ZO-iALM is triple-looped. An algorithm with fewer loops would be preferable. However, we are not aware of any existing simpler ZOMs with the same theoretical guarantees as our method for solving functional constrained black-box optimization. An important future direction is to reduce the number of loops and achieve the same theoretical guarantees.
	Nevertheless, as we demonstrate in Section~\ref{sec:num}, our algorithm can be efficiently implemented without much difficulty. %is not really difficult to implement in practice. 
	Specifically, to have a good practical performance, all parameters except the smoothness constant do not require much tuning at all, and most can be constant across different problems. Even with triple loops, the proposed ZO-iALM performs well numerically. Furthermore, some existing FOMs are also triple-looped and can perform better than double-looped FOMs; see %this issue of triple-loop also remains in some first-order works. The FO-iALM in 
	\cite{li2021rate} for example. %is also triple-looped, but their numerical results are better than other compared double-looped methods. 
	%\end{remark}
	
	The kernel problems that we solve within the iPPM are strongly-convex composite problems. Below, we design a zeroth-order accelerated proximal coordinate update (ZO-APCU) method.
	
	\subsection{Core subsolver: ZO-APCU}\label{sec:apcg}
	In this subsection, we give our core ZO subsolver, called ZO-APCU, to obtain $\vx^{t+1}$ in the ZO-iPPM subroutine. Though ZO-APCU will be used for solving subproblems of our proposed ZOM for \eqref{eq:ncp}, it has its own merit and appears to be the first proximal coordinate update method \emph{with acceleration} by only using function values of the smooth part. It solves strongly-convex composite problems in the form of
	\begin{equation}\label{eq:comp-prob}
	\min_{\vx\in\RR^d}~ F(\vx):=G(\vx)+H(\vx),
	\end{equation}
	where $G$ is a \emph{black-box} $\mu$-strongly convex and $L$-smooth function, and $H$ is a \emph{white-box} closed convex function. %separable function with $H(\vx) = \sum_{i=1}^{d} H_i(\vx_i)$. 
	
	We make the following assumptions on $G$ and $H$.
	\begin{assumption}[coordinate smooth]
		\label{assump:coor_smooth}
		$G$ is $M_j$ coordinate $j$-smooth, for some $j \in \ZZ^+$.
	\end{assumption}
	
	Note that if $G$ is $L$-smooth, Assumption \ref{assump:coor_smooth} must hold for $j = 1$ and $M_1 = L$.
	
	\begin{assumption}\label{assump:sep_apcg}
		The function $H$ is coordinate-separable, i.e., $H(\vx) = \sum_{i=1}^{d} H_i(x_i)$, where each $H_i(\cdot)$ is convex.
	\end{assumption}
	
	%To solve \eqref{eq:comp-prob}, we propose the zeroth-order accelerated proximal coordinate gradient (ZO-APCG) method, which is a ZO variant of  
	The pseudocode of ZO-APCU is shown in Algorithm~\ref{alg:apcg}, with its equivalent and efficient implementation (which avoids full-dimensional vector operations) given in the Appendix. The design is inspired from the APCG method in \cite{lin2014accelerated}. A zeroth-order accelerated random search (ZO-ARS) method has been designed in \cite{nesterov2017random} to solve \eqref{eq:comp-prob}. Although our ZO-APCU has the same-order query complexity as ZO-ARS, it significantly outperforms ZO-ARS in practice (see Section \ref{sec:num}), because ZO-APCU exploits the coordinate-structure and uses more accurate coordinate gradient estimator. %, \textcolor{blue}{possibly because our method exploits the coordinate structure}.%(see the numerical results section). %(see Section \ref{sec:num}). 
	
	%Assumption \ref{assump:sep_apcg} below is made due to the choice of ZO-APCG as the inner solver. Without it, the analysis in this paper still follows if we replace ZO-APCG by another ZOM (e.g. ZO variant of the APG in \cite{li2021rate}) with the same order of complexity for black-box strongly-convex composite problems.
	
	%\color{blue}
	%Assumption \ref{assump:coor_smooth} below specifies the coordinate smoothness of $G$. 
	%\begin{assumption}[coordinate smooth $G$]\label{assump:coor_smooth}
	%We assume $G(\cdot)$ is $M_j$ coordinate $j$-smooth, for some $j \in \ZZ^+$.
	%\end{assumption}

	%\color{black}
	
	\begin{algorithm}[h] % APCG, alg 2%
		\caption{Zeroth-order accelerated proximal coordinate update for \eqref{eq:comp-prob}: $\text{ZO-APCU}(G,H,\mu, L,\vareps)$}\label{alg:apcg}
		\DontPrintSemicolon
		\textbf{Input:} $\vx^{0} \in \dom(H)$, tolerance $\vareps$, smoothness $L$, strong convexity $\mu$, and epoch length $l$. \;
		%\textbf{Preprocessing:} \textcolor{red}{Why need this step?} Find $\vx^0 \in \dom(H) \st F(\vx^0) - F^* \le C_0$.\;
		\textbf{Initialization:} $\vz^0 = \vx^0, \alpha = \frac{1}{d}\sqrt{\frac{\mu}{L}}$ \;
		\For{$k=0,1,\ldots $}{
			Let $y^k = \frac{\vx^k+\alpha \vz^k}{1+\alpha}$\; 
			Sample $i_k \in [d]$ uniformly; compute
			$\tilde{\nabla}_{i_k} G(\vy^k)$ such that $\Vert \tilde{\nabla}_{i_k} G(\vy^k) - \nabla_{i_k} G(\vy^k) \Vert \le E_{i_k}.$\;
			Compute %111111.
			$\vz^{k+1} =  \argmin_{\vx \in \RR^d} \{ \frac{dL\alpha}{2} \Vert \vx-(1-\alpha)\vz^k-\alpha \vy^k \Vert^2 + \langle \tilde{\nabla}_{i_k}G(\vy^k), \vx_{i_k}-\vy_{i_k}^k \rangle+H_{i_k}(\vx_{i_k}) \}$.\; 
			$\vx^{k+1} = \vy^k + d\alpha(\vz^{k+1}-\vz^k)+d\alpha^2 (\vz^k-\vy^k)$.\; 
			\If{$k+1 \equiv 0 \pmod l$}{ %then go to step 5a. Else go to step 1 of the next iteration.
				Compute $\tilde{\nabla} G(\vx^{k+1})$ such that $\Vert \tilde{\nabla} G(\vx^{k+1}) - \nabla G(\vx^{k+1}) \Vert \le E$\; 
				$\hat{\vx}^{k+1} = \argmin_{\vx \in \RR^d} \{ \langle \tilde{\nabla} G(\vx^{k+1}), \vx-\vx^{k+1} \rangle + \frac{L}{2} \Vert \vx-\vx^{k+1} \Vert^2 + H(\vx) \}$\;
				\textbf{Return} $\hat{\vx}^{k+1}$ and \textbf{stop} if $\dist \big(\vzero, \tilde{\nabla} G(\hat{\vx}^{k+1}) + \partial H(\hat{\vx}^{k+1})\big) \le \frac{3\vareps}{4}$. 
			}
			%5c. Return $\hat{\vx}^{k+1}$ and stop if $\dist (\vzero, \tilde{\partial} F(\hat{\vx}^{k+1})) \le \vareps - E$. } % implied by above.
			%5c. Return $\hat{\vx}^{k+1}$ and stop if $\dist (\vzero, \tilde{\partial} F(\hat{\vx}^{k+1})) \le \vareps + 2L\sqrt{\frac{2ED}{\mu}}+2E$. } % Need to update this stop crit!
		}
	\end{algorithm}

	In Algorithm \ref{alg:apcg}, to obtain the required (coordinate) gradient estimates, we use the $p$-point coordinate gradient estimator defined in \eqref{eq:coor_grad_coef}, where $p = \max\{2(j-1),2\}$. Let %\textcolor{red}{(Typo below? The rhs does not depend on $i$?)}
	%\begin{subequations}
	\begin{equation}\label{eq:E}
	E_i = \sum_{q=1}^{m}|C_q|\frac{M_j q^{j+1}}{(j+1)!}a^{j+1}, \forall i \in [d];\  
	%\end{equation}
	%\begin{equation}
	E = \sqrt{\sum_{i=1}^{d}E_i^2},
	\end{equation}
	%\end{subequations}
	where $m = \frac{p}{2}$ and $a$ is the sampling radius. 
	By Theorem~\ref{thm:coor_grad_coef}, $E$ and $E_i$ are upper bounds of the gradient estimation errors for $\nabla G(\cdot)$ and $\nabla_i G(\cdot)$. Let $\bar{\vareps} = \frac{\mu}{512L} \vareps^2$. %In the rest of this paper, 
	We choose $a>0$ and $p$ such that the error bounds $E$ and $\{E_i\}_{i=1}^{d}$ in \eqref{eq:E} satisfy %\textcolor{red}{(What are $D_i$'s?)}
	%\begin{subequations}
	\begin{equation}\label{eq:small_grad_err}
	2L\sqrt{\frac{2ED}{\mu}} + E \le \frac{\vareps}{4},\quad
	%\end{equation}
	%\begin{equation}
	ED+\sum_{i=1}^{d}E_i D_i \le \frac{\bar{\vareps}}{2}.
	\end{equation}
	%\end{subequations}
	
	%\textcolor{red}{Give an explicit relation between $a$ and the number $p$ of points to the required error.}
	%\color{black}
	
	\subsection{Complexity Results}\label{sec:complexity}
	
	In this subsection, we establish the total query complexity result of Algorithm \ref{alg:ialm}. We first show that the core subsolver ZO-APCU can produce $\vx^{t+1}$ desired in the ZO-iPPM subroutine.
	%With the above conditions, we 
	The theorem below gives the complexity result of ZO-APCU to produce an expected $\vareps$-stationary point of \eqref{eq:comp-prob}.
	%In Theorem \ref{thm:apcg_cplx_expected} below, we give  until expected convergence. 
	The proof is highly nontrivial and given in the appendix. 
	
	\begin{theorem}\label{thm:apcg_cplx_expected}
		Let $\{\vx^k\}, \{\hat{\vx}^k\}$ be generated from Algorithm \ref{alg:apcg}. 
		Suppose the gradient error bounds $E$ and $\{E_i\}_{i=1}^{d}$ satisfy \eqref{eq:small_grad_err}. Let $\bar{\vareps} = \frac{\mu}{512L^2}\vareps^2$.
		Then $T = \left\lceil d\sqrt{\frac{L}{\mu}} \log\frac{2(F(\vx^0)-F^*)+\mu \Vert \vx^0-\vx^* \Vert^2}{\bar{\vareps}} \right\rceil$ iterations of ZO-APCG suffice to generate $\hat{\vx}^T$ satisfying %\sout{\textcolor{red}{$\EE [\dist(\vzero, \tilde{\partial} F(\hat{\vx}^T))] \le \frac{3\vareps}{4}$, which implies}} 
		$\EE [\dist(\vzero, \partial F(\hat{\vx}^T))] \le \vareps$.
	\end{theorem}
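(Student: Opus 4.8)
The plan is to split the argument into two phases. In the first phase I show that the accelerated scheme drives the expected objective gap $\EE[F(\vx^T)-F^*]$ below $\bar\vareps$ within the stated number of iterations; in the second phase I convert this small gap into the desired expected stationarity bound through the periodic proximal step that produces $\hat\vx^T$. For the first phase I would follow the estimate-sequence / Lyapunov analysis underlying the APCG method \cite{lin2014accelerated}, adapted to the fact that each coordinate update uses the inexact estimate $\tilde\nabla_{i_k}G(\vy^k)$ rather than $\nabla_{i_k}G(\vy^k)$. Concretely, with $\alpha=\tfrac1d\sqrt{\mu/L}$ I would work with the potential $V^k := (F(\vx^k)-F^*)+\tfrac{\mu}{2}\|\vz^k-\vx^*\|^2$ and establish a one-step recursion $\EE_{i_k}[V^{k+1}]\le(1-\alpha)V^k+r_k$. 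Writing $\delta_k:=\tilde\nabla_{i_k}G(\vy^k)-\nabla_{i_k}G(\vy^k)$, every place in the exact analysis where the true coordinate gradient appears now contributes an extra inner product $\langle\delta_k,\cdot\rangle$, which I bound by $|\delta_k|\,D_{i_k}\le E_{i_k}D_{i_k}$ using the coordinate-wise bounded domain \eqref{eq:bounded_coor}; taking the expectation over the uniformly sampled $i_k$ gives $r_k\le \tfrac{c}{d}\sum_{i=1}^d E_iD_i$ for an absolute constant $c$, where the key is that the error enters with the natural step-size scaling so that no spurious factor $1/\alpha$ is introduced.

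Unrolling this recursion yields $\EE[V^T]\le(1-\alpha)^TV^0+c'\,\tfrac1d\sum_i E_iD_i$. Since $\vz^0=\vx^0$, the initial potential is $V^0=(F(\vx^0)-F^*)+\tfrac{\mu}{2}\|\vx^0-\vx^*\|^2$, so that $2V^0/\bar\vareps$ is exactly the argument of the logarithm in the statement; choosing $T$ as stated makes $(1-\alpha)^TV^0\le\tfrac12\bar\vareps$ (using $-\log(1-\alpha)\ge\alpha$ and $1/\alpha=d\sqrt{L/\mu}$). The accumulated inexactness term is held below $\tfrac12\bar\vareps$ by the second inequality in \eqref{eq:small_grad_err}, whence $\EE[F(\vx^T)-F^*]\le\EE[V^T]\le\bar\vareps$.

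For the second phase I would exploit the optimality of the proximal step defining $\hat\vx^T$ in Algorithm~\ref{alg:apcg}. Its first-order condition gives $\xi\in\partial H(\hat\vx^T)$ with $\xi=-\tilde\nabla G(\vx^T)-L(\hat\vx^T-\vx^T)$, so that $\nabla G(\hat\vx^T)+\xi\in\partial F(\hat\vx^T)$; combining the $L$-smoothness of $G$ with the full-gradient error bound $\|\tilde\nabla G-\nabla G\|\le E$ yields $\dist(\vzero,\partial F(\hat\vx^T))\le 2L\|\hat\vx^T-\vx^T\|+E$. I then bound the step length: the $L$-strong convexity of the proximal model together with $F(\hat\vx^T)\ge F^*$ gives $\tfrac{L}{2}\|\hat\vx^T-\vx^T\|^2\le(F(\vx^T)-F^*)+ED$, where the cross term $\langle\nabla G-\tilde\nabla G,\hat\vx^T-\vx^T\rangle$ is controlled by $E\,D$ via \eqref{eq:bounded}; using $\mu\le L$ this implies $\|\hat\vx^T-\vx^T\|\le\sqrt{\tfrac{2}{\mu}(F(\vx^T)-F^*+ED)}$. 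Splitting the square root, taking expectations, applying Jensen's inequality to the concave root, and substituting $\EE[F(\vx^T)-F^*]\le\bar\vareps$ with $\bar\vareps=\tfrac{\mu}{512L^2}\vareps^2$ reduces the gap contribution to $\tfrac{\vareps}{8}$, while the first inequality of \eqref{eq:small_grad_err} absorbs the remaining $2L\sqrt{2ED/\mu}+E$ into $\tfrac{\vareps}{4}$, so that $\EE[\dist(\vzero,\partial F(\hat\vx^T))]\le\vareps$.

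The main obstacle is the first phase: carrying the inexact coordinate oracle through the accelerated estimate-sequence argument while ensuring that each per-iteration error enters with the correct step-size weighting, so that after unrolling the total contribution of $\{E_i\}$ remains of order $\tfrac1d\sum_iE_iD_i$ rather than being amplified by $1/\alpha=d\sqrt{L/\mu}$. Establishing this scaling is precisely what makes the mild calibration \eqref{eq:small_grad_err} of the sampling radius $a$ (through the bounds $E$ and $E_i$ of \eqref{eq:E}) sufficient, and it is the step I expect to require the most care.
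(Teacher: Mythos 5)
Your two-phase argument mirrors the paper's own proof: Phase~1 is the inexact-APCG potential recursion (the paper's Theorem~\ref{thm:apcg_conv}, whose unrolled error term is $ED+\sum_{i}E_iD_i$ rather than your $\tfrac{c'}{d}\sum_i E_iD_i$, but either way it is exactly the quantity that the second inequality of \eqref{eq:small_grad_err} keeps below $\tfrac{1}{2}\bar\vareps$), and Phase~2 is the prox-step stationarity bound (the paper's Theorem~\ref{thm:subdiff_obj}), where your use of the $L$-strong convexity of the proximal model even yields a slightly sharper constant than the paper's triangle-inequality route through $\vx^*$. The proposal is correct and takes essentially the same approach as the paper.
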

	
	%%% # iter for Expected Err => Expected # iter for Err:
	%We will use Algorithm~\ref{alg:apcg} as the core subroutine in our ZOM for \eqref{eq:ncp}. 
	Theorem~\ref{thm:apcg_cplx_expected} only guarantees that the output $\hat{\vx}^T$ nearly satisfies the stationarity condition \emph{in expectation}. In order to show the complexity results of Algorithm~\ref{alg:ialm}, we need, in Line 4 of ZO-iALM, the iterate $\vx^{k+1}$ obtained from ZO-iPPM \emph{deterministically} satisfies the near-stationarity condition of $\cL_{\beta_k}(\cdot,y^k)$ so that we can uniformly bound the AL function at the generated iterates. For this technical reason, we will require the output from Algorithm~\ref{alg:apcg} to satisfy the near-stationarity condition \emph{deterministically} instead of in an expectation sense.
	Theorem~\ref{thm:exp-cplx} below serves as a bridge to convert deterministic iteration bound until expected convergence to expected iteration bound until deterministic convergence, by only sacrificing a $\log$ factor in the iteration bound. %is a novel result that can be applied to any stochastic subsolver. 
	The result is not difficult to prove but is essential in our complexity analysis of ZO-iALM. %since in each call of ZO-APCG, the stopping condition is required to hold deterministically rather than in expectation. 
	
	\begin{theorem}[expected complexity]\label{thm:exp-cplx}
		For a sequence of nonnegative random numbers $\{q_k\}_{k=1}^{\infty}$, suppose $\EE [q_k]\leq C \eta^k, \forall k \geq 1$ for some $\eta \in (0,1)$ and $C > 0$.
		Given $\vareps > 0$, define $K(\vareps) = \min_{k \in \ZZ^+} \{ k:  q_k \leq \vareps\}$.
		Then $\EE [K(\vareps)] \leq \frac{2-\eta}{1-\eta} \log\frac{C}{\vareps (1-\eta)^2}+3-\eta$.
	\end{theorem}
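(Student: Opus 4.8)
The plan is to treat $K(\vareps)$ as a first-passage index and bound its expectation through the tail-sum identity $\EE[K(\vareps)] = \sum_{k\ge 0}\Prob(K(\vareps) > k)$, which is valid because $K(\vareps)$ takes values in $\ZZ^+$. The starting observation is the inclusion $\{K(\vareps) > k\} \subseteq \{q_k > \vareps\}$: if the first index at which $q$ drops to $\vareps$ exceeds $k$, then in particular $q_k > \vareps$. Combining this with Markov's inequality and the hypothesis $\EE[q_k]\le C\eta^k$ yields the two tail estimates $\Prob(K(\vareps) > k) \le \min\{1,\,C\eta^k/\vareps\}$ for every $k\ge 1$, together with the trivial $\Prob(K(\vareps) > 0) = 1$. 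As a by-product $\Prob(K(\vareps)=\infty) \le \inf_k C\eta^k/\vareps = 0$, so $K(\vareps)$ is almost surely finite and the identity above is meaningful.

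Next I would split the tail sum at a cutoff $T$, using the bound $1$ for the first $T+1$ terms and the geometric bound for the remainder:
\[
\EE[K(\vareps)] \le (T+1) + \frac{C}{\vareps}\sum_{k=T+1}^{\infty}\eta^k = (T+1) + \frac{C\,\eta^{T+1}}{\vareps(1-\eta)}.
\]
The cutoff is chosen to make the geometric remainder harmless: taking $T+1 = \left\lceil \frac{\log\frac{C}{\vareps(1-\eta)^2}}{\log(1/\eta)}\right\rceil$ forces $\eta^{T+1}\le \frac{\vareps(1-\eta)^2}{C}$, so the remainder is at most $1-\eta$, and the ceiling gives $T+1 \le \frac{1}{\log(1/\eta)}\log\frac{C}{\vareps(1-\eta)^2} + 1$.

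Finally I would convert the factor $\frac{1}{\log(1/\eta)}$ into the advertised $\frac{2-\eta}{1-\eta}$ via the elementary inequality $\log\frac1\eta \ge 1-\eta$ for $\eta\in(0,1)$ (equivalently $\log\eta \le \eta-1$), which gives $\frac{1}{\log(1/\eta)} \le \frac{1}{1-\eta} \le \frac{2-\eta}{1-\eta}$. Assembling the pieces yields $\EE[K(\vareps)] \le \frac{2-\eta}{1-\eta}\log\frac{C}{\vareps(1-\eta)^2} + 2 - \eta$, which is in fact slightly sharper than the claimed $+\,3-\eta$ and so implies it.

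The one genuine subtlety — the step I expect to need the most care — concerns the sign of $\log\frac{C}{\vareps(1-\eta)^2}$. The argument above tacitly uses $\frac{C}{\vareps(1-\eta)^2}\ge 1$, both so that the cutoff $T$ is a nonnegative integer and so that replacing $\frac{1}{\log(1/\eta)}$ by the larger $\frac{1}{1-\eta}$ only weakens an inequality whose coefficient is nonnegative. This is precisely the regime in which the bound is meaningful and in which the theorem is invoked (a small tolerance $\vareps$ against fixed $C$ and $\eta$). When $\frac{C}{\vareps(1-\eta)^2} < 1$ the stated right-hand side can dip below the trivial value $\EE[K(\vareps)] \ge 1$, so only the former case carries content; there I would simply take $T=0$, whence the geometric remainder is at most $\eta(1-\eta) < 1$ and $\EE[K(\vareps)] < 2$. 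Everything else reduces to summing a geometric series together with the two monotonicity facts noted above.
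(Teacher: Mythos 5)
Your argument is correct and rests on the same three ingredients as the paper's proof: Markov's inequality applied to $q_k$, a logarithmic cutoff $T\approx\log_{1/\eta}\frac{C}{\vareps(1-\eta)^2}$ chosen so that $\eta^{T+1}\le\frac{\vareps(1-\eta)^2}{C}$, and the elementary bound $\log\frac{1}{\eta}\ge 1-\eta$. The only real difference is the decomposition of the expectation: the paper writes $\EE[K(\vareps)]=\sum_{k\ge1}k\,P(K(\vareps)=k)$, bounds $P(K(\vareps)=k)\le P(q_{k-1}>\vareps)$, and must then evaluate $\sum_{k\ge t}k\eta^k$ in closed form before optimizing over $t$; you instead use the tail-sum identity $\EE[K(\vareps)]=\sum_{k\ge0}P(K(\vareps)>k)$ with $\{K(\vareps)>k\}\subseteq\{q_k>\vareps\}$, which reduces the tail to a plain geometric series and yields the marginally sharper additive constant $2-\eta$ in place of $3-\eta$. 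Your observation about the degenerate regime $\frac{C}{\vareps(1-\eta)^2}<1$ is well taken and applies equally to the paper's argument (there $t=\lceil s\rceil$ would not lie in $\ZZ^+$); both proofs, and the theorem itself, are only meaningful when that ratio is at least one, which is the regime in which the result is invoked. Your handling of the finiteness of $K(\vareps)$ is also a point the paper leaves implicit.
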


	By Theorem \ref{thm:apcg_cplx_expected} (and its proof for linear convergence) and Theorem \ref{thm:exp-cplx}, we have the following %corollary that gives the 
	expected iteration complexity result of Algorithm \ref{alg:apcg} until deterministic convergence.
	\begin{corollary}\label{cor:apcg_cplx}
		Under the same assumptions as Theorem \ref{thm:apcg_cplx_expected}, $T$ iterations of ZO-APCU are enough to generate $\hat{\vx}^T$ satisfying %$\dist(\vzero, \tilde{\partial} F(\hat{\vx}^T)) \le \frac{3\vareps}{4}$, 
		$\dist(\vzero, \partial F(\hat{\vx}^T)) \le \vareps$, where $\EE[T] = \tilde{O} \left( d\sqrt{\frac{L}{\mu}} \right)$.
	\end{corollary}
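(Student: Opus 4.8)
The plan is to turn the \emph{deterministic} iteration count for \emph{expected} convergence supplied by Theorem~\ref{thm:apcg_cplx_expected} into an \emph{expected} iteration count for \emph{deterministic} convergence, using Theorem~\ref{thm:exp-cplx} as the conversion device. First I would extract from the proof of Theorem~\ref{thm:apcg_cplx_expected} the nonnegative Lyapunov potential $q_k$ that it contracts in expectation --- its initial value $q_0=C:=2(F(\vx^0)-F^*)+\mu\Vert\vx^0-\vx^*\Vert^2$ is exactly the numerator of the logarithm in the iteration count $T$ of that theorem --- together with the accelerated expected rate
\begin{equation*}
\EE[q_k]\le C(1-\alpha)^k,\qquad \alpha=\tfrac1d\sqrt{\tfrac{\mu}{L}}.
\end{equation*}
This is precisely the hypothesis of Theorem~\ref{thm:exp-cplx} with $\eta=1-\alpha\in(0,1)$.

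Next I would verify the \emph{deterministic} implication that makes the conversion meaningful: along any realized trajectory, once $q_k$ drops below the threshold $\bar\vareps$ of Theorem~\ref{thm:apcg_cplx_expected}, the stopping test of Algorithm~\ref{alg:apcg} must fire. At an epoch boundary the method forms $\hat\vx^{k+1}$ by one inexact proximal-gradient step from $\vx^{k+1}$ and checks $\dist\big(\vzero,\tilde\nabla G(\hat\vx^{k+1})+\partial H(\hat\vx^{k+1})\big)\le\tfrac{3\vareps}{4}$. Using optimality of the prox step, $L$-smoothness, strong convexity (so that $F(\vx^{k+1})-F^*\le\tfrac12 q_{k+1}\le\tfrac12 q_k$, the quadratic part of $q$ being nonnegative), and the gradient-error conditions \eqref{eq:small_grad_err}, the realized value of this distance is bounded by a deterministic increasing function of $q_k$ that is $\le\tfrac{3\vareps}{4}$ whenever $q_k\le\bar\vareps$. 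Hence, writing $K(\bar\vareps)=\min\{k:q_k\le\bar\vareps\}$, the method halts no later than the first epoch boundary at or after $K(\bar\vareps)$; and since $E\le\tfrac{\vareps}{4}$ by \eqref{eq:small_grad_err}, the returned $\hat\vx^T$ satisfies $\dist(\vzero,\partial F(\hat\vx^T))\le\tfrac{3\vareps}{4}+E\le\vareps$ deterministically, which is the asserted guarantee.

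Finally I would apply Theorem~\ref{thm:exp-cplx} with $\eta=1-\alpha$, the constant $C$ above, and target accuracy $\bar\vareps$, giving
\begin{equation*}
\EE[K(\bar\vareps)]\le\frac{2-\eta}{1-\eta}\log\frac{C}{\bar\vareps(1-\eta)^2}+3-\eta=\frac{1+\alpha}{\alpha}\log\frac{C}{\bar\vareps\,\alpha^2}+2+\alpha.
\end{equation*}
Since $\tfrac{1+\alpha}{\alpha}=d\sqrt{L/\mu}+1$ while $\log\frac{C}{\bar\vareps\,\alpha^2}$ carries only a $\log\tfrac1\vareps$ dependence on $\vareps$ (its remaining dependence on $d,L,\mu$ being lower-order logs), the product is $\tilde O(d\sqrt{L/\mu})$; accounting for the epoch length $l$ between successive checks inflates $T$ by at most a factor $l$, and I conclude $\EE[T]=\tilde O(d\sqrt{L/\mu})$.

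The main obstacle is the deterministic implication in the second step: converting the in-expectation contraction of $q_k$ into a per-realization guarantee that the stopping test actually fires. This is exactly where the delicate choice of $\bar\vareps$ and the conditions \eqref{eq:small_grad_err} on $E$ and $\{E_i\}$ enter, since the test is evaluated with the inexact gradient $\tilde\nabla G$ at the realized $\hat\vx^{k+1}$ rather than with $\nabla G$ or in expectation. Everything else --- identifying the rate $\eta=1-\alpha$, plugging into Theorem~\ref{thm:exp-cplx}, and the $O(\log)$ bookkeeping --- is routine.
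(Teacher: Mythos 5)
Your proposal follows essentially the same route as the paper, which obtains the corollary by combining the linear-convergence potential from the proof of Theorem~\ref{thm:apcg_cplx_expected} (i.e., Theorem~\ref{thm:apcg_conv}) with the conversion device of Theorem~\ref{thm:exp-cplx}; your second step is precisely the paper's Theorem~\ref{thm:subdiff_obj} argument showing the stopping test fires deterministically once the potential falls below $\bar\vareps$. The only wrinkle --- shared with the paper's own one-line justification --- is that the potential's expectation decays to the additive floor $ED+\sum_i E_iD_i\le\bar\vareps/2$ rather than exactly as $C\eta^k$, so Theorem~\ref{thm:exp-cplx} must be invoked after absorbing this floor into the threshold, a bookkeeping adjustment that does not affect the $\tilde O\big(d\sqrt{L/\mu}\big)$ bound.
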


	Relying on Corollary \ref{cor:apcg_cplx}, the next theorem gives the complexity result of the subroutine ZO-iPPM applied on the nonconvex composite problem 
	\begin{equation}\label{eq:nc_prob}
	\Phi^*= \min_{\vx\in\RR^d} ~\big\{\Phi(\vx) := \phi(\vx)+\psi(\vx)\big\},
	\end{equation}
	where $\phi$ is a \emph{black-box} $L_\phi$-smooth and $\rho$-weakly convex function, and $\psi$ is a \emph{white-box} closed convex function.
	%which uses ZO-APCG as its inner subsolver. % same as \cite{li2021rate}, except the last (total cplx) part.
	\begin{theorem}\label{thm:ippm-compl}
		%Consider applying Algorithm~\ref{alg:ippm} on the problem \eqref{eq:nc_prob}. 
		Suppose $\Phi^*$ in \eqref{eq:nc_prob} is finite. Then the subroutine ZO-iPPM in Algorithm~\ref{alg:ialm} must stop within $T$ iterations, where
		%\begin{equation} \label{eq9}
		$\textstyle T = \left\lceil \frac{32 \rho}{\vareps^2} (\Phi(\vx^0)-\Phi^*) \right\rceil.$
		%\end{equation}
		The output $\vx^{T}$ must %be an $\vareps$-stationary point of \eqref{eq:nc_prob}, i.e., 
		satisfy $\dist(\vzero, \partial \Phi(\vx^T)) \le \vareps$. In addition, if $\dom(\psi)$ has diameter $D_\psi < \infty$ and ZO-APCU is applied to find each $\vx^{t+1}$ in ZO-iPPM, then the expected total query complexity is $\tilde{O}\left( \frac{d\sqrt{\rho L_\phi}}{\vareps^2} [\Phi(\vx^0)-\Phi^*] \log \frac{D_\psi}{\vareps} \right)$.
	\end{theorem}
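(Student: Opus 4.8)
The plan is to treat ZO-iPPM as an inexact proximal point method applied to the $\rho$-weakly convex function $\Phi$, exploiting that each subproblem it solves is strongly convex. Write $\Phi_t(\vx) := \Phi(\vx) + \rho\|\vx - \vx^t\|^2$, which is exactly $G+\psi$ in the subroutine; since $\phi$ is $\rho$-weakly convex and $\psi$ is convex, $\Phi$ is $\rho$-weakly convex, so $\Phi_t$ is $\rho$-strongly convex. First I would record the inexactness produced by the inner solve: Line 4 of ZO-iPPM guarantees $\dist(\vzero, \partial\Phi_t(\vx^{t+1})) \le \frac{\vareps}{4}$, i.e. there is $\delta^{t+1} \in \partial\Phi_t(\vx^{t+1})$ with $\|\delta^{t+1}\| \le \frac{\vareps}{4}$.

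The key step is a one-step descent inequality. Using the standard strong-convexity estimate $\Phi_t(\vx^{t+1}) - \min\Phi_t \le \frac{\|\delta^{t+1}\|^2}{2\rho} \le \frac{\vareps^2}{32\rho}$, together with $\min\Phi_t \le \Phi_t(\vx^t) = \Phi(\vx^t)$, I obtain $\Phi(\vx^{t+1}) + \rho\|\vx^{t+1} - \vx^t\|^2 \le \Phi(\vx^t) + \frac{\vareps^2}{32\rho}$, hence $\Phi(\vx^{t+1}) \le \Phi(\vx^t) - \rho\|\vx^{t+1} - \vx^t\|^2 + \frac{\vareps^2}{32\rho}$. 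This single inequality drives both the iteration bound and the stationarity guarantee. Whenever the stopping test $2\rho\|\vx^{t+1} - \vx^t\| \le \frac{\vareps}{2}$ fails, we have $\rho\|\vx^{t+1} - \vx^t\|^2 > \frac{\vareps^2}{16\rho}$, so the descent inequality yields $\Phi(\vx^{t+1}) < \Phi(\vx^t) - \frac{\vareps^2}{32\rho}$. Telescoping over all non-stopping iterations and using $\Phi \ge \Phi^*$ forces termination within $\lceil\frac{32\rho}{\vareps^2}(\Phi(\vx^0) - \Phi^*)\rceil$ steps. At termination, the decomposition $\partial\Phi_t(\vx^{t+1}) = \partial\Phi(\vx^{t+1}) + 2\rho(\vx^{t+1} - \vx^t)$ gives $\dist(\vzero, \partial\Phi(\vx^{t+1})) \le \|\delta^{t+1}\| + 2\rho\|\vx^{t+1} - \vx^t\| \le \frac{\vareps}{4} + \frac{\vareps}{2} \le \vareps$, the claimed stationarity.

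For the total complexity, I would note that each subproblem $\min G+\psi$ is solved by ZO-APCU with strong-convexity modulus $\mu = \rho$ and smoothness $L = L_\phi + 2\rho = O(L_\phi)$, so Corollary~\ref{cor:apcg_cplx} gives an expected $\tilde{O}(d\sqrt{(L_\phi+2\rho)/\rho})$ query cost per outer iteration. The hidden log factor is the $\log\frac{2(F(\vx^0)-F^*)+\mu\|\vx^0-\vx^*\|^2}{\bar{\vareps}}$ appearing in Theorem~\ref{thm:apcg_cplx_expected}; I would bound its numerator by $O((L+\rho)D_\psi^2)$ via $\|\vx^t - \vx^*\| \le D_\psi$ and $L$-smoothness, and use $\bar{\vareps} = \Theta(\rho\vareps^2/L^2)$, which collapses that factor to $O(\log\frac{D_\psi}{\vareps})$ up to constants in the problem data. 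Multiplying the outer bound $O(\rho(\Phi(\vx^0)-\Phi^*)/\vareps^2)$ by the per-iteration cost and simplifying $\rho\sqrt{(L_\phi+2\rho)/\rho} = \sqrt{\rho(L_\phi+2\rho)} = O(\sqrt{\rho L_\phi})$ (using $L_\phi \ge \rho$) yields $\tilde{O}\!\left(\frac{d\sqrt{\rho L_\phi}}{\vareps^2}[\Phi(\vx^0)-\Phi^*]\log\frac{D_\psi}{\vareps}\right)$.

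The main obstacle is the bookkeeping between determinism and expectation. The descent argument, and therefore the outer iteration bound and the stationarity of the returned point, must hold \emph{deterministically}; this is why I must invoke the deterministic stopping guarantee of Corollary~\ref{cor:apcg_cplx} (the inner solve returns $\vx^{t+1}$ with $\dist(\vzero,\partial\Phi_t(\vx^{t+1}))\le\frac{\vareps}{4}$ for sure) rather than the merely in-expectation stationarity of Theorem~\ref{thm:apcg_cplx_expected}. The outer count is then at most $T$ deterministically, while each inner solve carries only an \emph{expected} query cost, so the total expected complexity follows by summing the per-iteration expected costs over the at most $T$ outer iterations.
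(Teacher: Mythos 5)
Your proposal is correct and takes essentially the same approach as the paper: both rest on the $\rho$-strong convexity of $\Phi_t(\cdot)=\Phi(\cdot)+\rho\|\cdot-\vx^t\|^2$ to get the one-step inequality $\Phi(\vx^{t+1})+\rho\|\vx^{t+1}-\vx^t\|^2\le\Phi(\vx^t)+\frac{\vareps^2}{32\rho}$, derive stationarity from the subdifferential decomposition at the stopping iterate, and compose with Corollary~\ref{cor:apcg_cplx} (with $\mu=\rho$, $L=L_\phi+2\rho$) for the expected query count. The only difference is cosmetic: you count via a per-iteration sufficient decrease whenever the stopping test fails, whereas the paper telescopes and bounds $\min_{t}\|\vx^{t+1}-\vx^t\|$ on average; both yield the same $T$.
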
 % Only the last (total cplx) part needs updated proof.
	
	\iffalse
	
	\begin{remark}
		For our case, $\Phi(\vx^0)-\Phi^*$ can be uniformly bounded when Algorithm~\ref{alg:ippm} is applied to solve subproblems of ZO-iALM even if the penalty parameter (that is proportional to the smoothness constant) in the AL function geometrically increases. As a result, when the smoothness and weak convexity parameters are both $O(\vareps^{-1})$, we can obtain a total query complexity of $\tilde{O}\left(\frac{d\sqrt{\rho L_{\Phi}}}{\vareps^2}\right) = \tilde{O}(d\vareps^{-3})$, which matches the best known FO complexity result of $\tilde{O}(\vareps^{-3})$ for the iPPM in \cite{li2021rate} up to a factor of $d$. 
		
		\textcolor{blue}{We also remark that, compared to the $O(\frac{L}{\vareps^2})$ result in \cite{flokas2019efficiently}, Our $O(\frac{\sqrt{\rho L}}{\vareps^2})$ result not only considers an extra nonsmooth part in the objective, but also improves the $L$ dependence to $\sqrt{\rho L}$. One benefit of this result is an improved $\tilde{O}(d \vareps^{-\frac{5}{2}})$ total complexity (Theorem \ref{thm:ialm-cplx-2}) in the convex constraints case, which is better than $\tilde{O}(d \vareps^{-3})$ if we use the method in \cite{flokas2019efficiently} as the subsolver.}
	\end{remark}
	
	\fi
	%%%%%%%%%%%%%%%%%%%%%%%%%%%%%%%%%%%%%%%%%%%%%%
	
	Now we are ready to establish the query complexity of the proposed ZO-iALM. Due to the difficulty of the possibly nonconvex constraints, a certain regularity condition must be made in order to guarantee (near) feasibility in a polynomial time. Following \cite{li2021rate, lin2019inexact-pp, sahin2019inexact} that study FOMs, we assume the following regularity condition on \eqref{eq:ncp}. %\textcolor{red}{(mention a few other papers making different regularity assumptions).}
	
	\begin{assumption}[regularity] \label{assumption:regularity}
		There is some $v > 0$ such that for any $k\ge 1$,
		\begin{equation}\label{eq:regularity}
		\textstyle v\Vert \vc(\vx^k) \Vert \le \dist \left(-J_c(\vx^k)^{\top} \vc(\vx^k), \frac{\partial h(\vx^k)}{\beta_{k-1}} \right).
		\end{equation}
	\end{assumption}
	
	\begin{remark}
		Notice that we only need the existence of $v$ in Assumption \ref{assumption:regularity} but do not need to know its value in our algorithm. The assumption ensures that a near-stationary point of the AL function is near feasible. In \cite{li2021rate}, the regularity condition is proven to hold for all affine-equality constrained problems possibly with either an additional polyhedral or ball constraint set. Moreover, several nonconvex examples satisfying Assumption \ref{assumption:regularity} are given in \cite{lin2019inexact-pp, sahin2019inexact}.
		
		With Assumption \ref{assumption:regularity}, we can simply solve a quadratic-penalty problem of \eqref{eq:ncp} with a large enough penalty parameter, in order to find a near-KKT point of \eqref{eq:ncp}. %AL (very similar to the penalty method) with Algorithm \ref{alg:ippm} and claim this point to be near KKT. But 
		However, this approach is numerically much slower than the iALM framework in Algorithm \ref{alg:ialm}; see the tests in \cite{li2021rate} for example. %which updates the dual variable and penalty gradually. Although iALM and penalty-type methods would have the same order of complexity, the practical advantage of iALM over penalty-type methods is quite significant and well observed in lots of existing works.
	\end{remark}
	
	%{\color{blue}if you cannot show these assumptions are stronger than ours, reviewers would ask.}
	\begin{remark}
		To solve the nonconvex constrained problem \eqref{eq:ncp}, a few existing works about FOMs have made key assumptions different from Assumption \ref{assumption:regularity}. For example, the uniform Slater's condition was assumed in \cite{ma2020quadratically}, and a strong MFCQ condition was assumed in \cite{boob2019stochastic}. These assumptions are neither strictly stronger nor strictly weaker than Assumption \ref{assumption:regularity}.
	\end{remark}
	%\color{black}
	
	The theorem below gives the total query complexity of ZO-iALM with general dual step sizes. 
	
	\begin{theorem}[total complexity of ZO-iALM]\label{thm:ialm-cplx-2}
		Suppose that Assumptions~\ref{assump:smooth-wc}, \ref{assump:composite}, and \ref{assumption:regularity} hold. In Algorithm~\ref{alg:ialm}, for some fixed $q \in \ZZ^+ \cup \{0\}$ and $M > 0$, let 
		%\begin{equation}\label{wk-general}
		$w_k = \frac{M(k+1)^q}{\Vert \vc(\vx^{k+1}) \Vert}, \forall k \ge 0.$
		%\end{equation}
		Then given $\vareps>0$, Algorithm~\ref{alg:ialm} can produce an $\vareps$-KKT solution of \eqref{eq:ncp} %with $w_k$ given in \eqref{wk-general} 
		with $\tilde{O}(d\vareps^{-3})$ queries to $g$ and $\vc$ in expectation, by using Algorithm~\ref{alg:apcg} to find each $\vx^{t+1}$ in ZO-iPPM. In addition, if $\vc(\vx)=\vA\vx-\vb$, then $\tilde{O}(d\vareps^{-\frac{5}{2}})$ queries in expectation are enough for Algorithm~\ref{alg:ialm} to produce an $\vareps$-KKT solution of \eqref{eq:ncp}. 
	\end{theorem}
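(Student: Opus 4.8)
The plan is to verify the two defining inequalities of an $\vareps$-KKT point at the iterate $\vx^{k+1}$ returned by the $k$-th call to ZO-iPPM, using the multiplier $\hat{\vy}^{k+1} := \vy^k + \beta_k \vc(\vx^{k+1})$. The stationarity inequality comes almost for free: since $\partial \cL_{\beta_k}(\cdot,\vy^k) = \partial f_0 + J_c^\top(\vy^k + \beta_k\vc)$ and Theorem~\ref{thm:ippm-compl} guarantees $\dist(\vzero, \partial \cL_{\beta_k}(\vx^{k+1},\vy^k)) \le \vareps$ deterministically, we immediately obtain $\dist(\vzero, \partial f_0(\vx^{k+1}) + J_c^\top(\vx^{k+1})\hat{\vy}^{k+1}) \le \vareps$. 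The whole difficulty lies in driving the feasibility residual $\Vert \vc(\vx^{k+1}) \Vert$ below $\vareps$ within a controlled number of outer iterations while keeping the per-subproblem work bounded.

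For feasibility I would first divide the deterministic near-stationarity inequality by $\beta_k$ to get $\dist\big(-J_c^\top(\vx^{k+1})\vc(\vx^{k+1}), \frac{\partial h(\vx^{k+1})}{\beta_k}\big) \le \frac{1}{\beta_k}(\vareps + B_0 + B_c\Vert \vy^k \Vert)$, using the uniform bounds $\Vert \nabla g \Vert \le B_0$, $\Vert J_c \Vert \le B_c$ from \eqref{P3-eqn-33}. Feeding this into Assumption~\ref{assumption:regularity} yields the master feasibility estimate $\Vert \vc(\vx^{k+1}) \Vert \le \frac{\vareps + B_0 + B_c\Vert \vy^k \Vert}{v\,\beta_k}$. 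Two further bounds close the argument. First, the chosen dual step gives $\Vert \vy^{k+1} - \vy^k \Vert = w_k\Vert \vc(\vx^{k+1}) \Vert = M(k+1)^q$, so $\Vert \vy^k \Vert = O(k^{q+1})$ grows only polynomially. Second, I would bound the initial suboptimality $\Phi_k(\vx^k) - \Phi_k^*$ of each subproblem $\Phi_k := \cL_{\beta_k}(\cdot,\vy^k)$ uniformly in $k$: completing the square gives the lower bound $\Phi_k^* \ge -B_0 - \frac{\Vert \vy^k \Vert^2}{2\beta_k} = -O(1)$, while a telescoping argument controls $\Phi_k(\vx^k)$ from above. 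Concretely, combining the within-subproblem descent $\cL_{\beta_k}(\vx^{k+1},\vy^k) \le \cL_{\beta_k}(\vx^k,\vy^k)$ with the transition identity $\cL_{\beta_{k+1}}(\vx^{k+1},\vy^{k+1}) - \cL_{\beta_k}(\vx^{k+1},\vy^k) = \big(w_k + \tfrac{\beta_{k+1}-\beta_k}{2}\big)\Vert \vc(\vx^{k+1}) \Vert^2$ and plugging in the master estimate shows this increment is $O(\mathrm{poly}(k)/\sigma^k)$, hence summable; therefore $\Phi_k(\vx^k) = O(1)$ and the gap $\Phi_k(\vx^k) - \Phi_k^* = O(1)$ is uniformly bounded.

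With these pieces the count is mechanical. Since $\beta_k = \beta_0\sigma^k$ grows geometrically while $\Vert \vy^k \Vert = O(k^{q+1})$ grows polynomially, the master estimate forces $\Vert \vc(\vx^{k+1}) \Vert \le \vareps$ once $\beta_k \gtrsim \mathrm{poly}(k)/(v\vareps)$, i.e.\ after $K = O(\log\frac{1}{\vareps})$ outer iterations, at which point $\beta_{K-1} = \tilde O(\vareps^{-1})$. By Theorem~\ref{thm:ippm-compl} the expected work of the $k$-th subproblem is $\tilde O\big(\frac{d\sqrt{\hat\rho_k\hat L_k}}{\vareps^2}[\Phi_k(\vx^k)-\Phi_k^*]\big) = \tilde O\big(\frac{d\sqrt{\hat\rho_k\hat L_k}}{\vareps^2}\big)$, using \eqref{eq:hat_rho_k} and the boundedness of the gap. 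In the general nonconvex case $\hat\rho_k,\hat L_k = O(\beta_k)$, so $\sqrt{\hat\rho_k\hat L_k} = O(\beta_k)$ and the geometric sum over $k<K$ is dominated by its last term, giving $\tilde O\big(\frac{d\beta_{K-1}}{\vareps^2}\big) = \tilde O(d\vareps^{-3})$. When $\vc(\vx)=\vA\vx-\vb$ is affine, $\Vert \vc \Vert^2$ is convex so the penalty contributes nothing to the weak-convexity constant; thus $\hat\rho_k = \rho_0 = O(1)$ while $\hat L_k = O(\beta_k)$, whence $\sqrt{\hat\rho_k\hat L_k} = O(\sqrt{\beta_k})$ and the dominant term yields $\tilde O\big(\frac{d\sqrt{\beta_{K-1}}}{\vareps^2}\big) = \tilde O(d\vareps^{-5/2})$.

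The main obstacle is the uniform bound $\Phi_k(\vx^k)-\Phi_k^* = O(1)$: the lower bound on $\Phi_k^*$ is delicate because the linear multiplier term $(\vy^k)^\top\vc(\vx)$ could a priori be as negative as $-\Vert \vy^k \Vert\bar B_c = -\Omega(k^{q+1})$, and only the completing-the-square cancellation against the penalty, together with the fast decay $\Vert \vc(\vx^{k+1})\Vert = O(\mathrm{poly}(k)/\sigma^k)$, saves it; symmetrically, the telescoped upper bound on $\Phi_k(\vx^k)$ relies on the transition increments being summable, which in turn feeds on the feasibility estimate. Keeping this circular dependency consistent---feasibility needs $\Vert\vy^k\Vert$ bounded, the gap bound needs feasibility, and both feed the final geometric sum---is the technical heart of the argument; once it is untangled, the regularity condition and the geometric growth of $\beta_k$ do the rest.
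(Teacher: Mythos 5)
Your proposal follows essentially the same route as the paper's proof: bound $\Vert\vy^k\Vert$ polynomially from the choice of $w_k$, divide the near-stationarity of the AL subproblem by $\beta_k$ and feed it into the regularity condition to get $\Vert\vc(\vx^{k+1})\Vert = O(\mathrm{poly}(k)/\beta_k)$ and hence $K=O(\log\vareps^{-1})$ outer iterations, bound the per-subproblem AL gap by $\tilde O(1)$, and multiply the iPPM iteration count from Theorem~\ref{thm:ippm-compl} by the ZO-APCU cost from Corollary~\ref{cor:apcg_cplx}, with the affine case improving because $\rho_c=0$ makes $\hat\rho_k=\tilde O(1)$. The paper delegates the first and third of these steps to the proof of Theorem~2 and equations (41)--(42) of \cite{li2021rate}; you reconstruct them, which is fine.

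One step deserves a correction. The telescoping upper bound on $\Phi_k(\vx^k)$ leans on the exact descent $\cL_{\beta_k}(\vx^{k+1},\vy^k)\le\cL_{\beta_k}(\vx^k,\vy^k)$, which ZO-iPPM does not guarantee: each inner proximal step only yields $\Phi(\vx^{t+1})+\rho\Vert\vx^{t+1}-\vx^t\Vert^2\le\Phi(\vx^t)+\frac{\delta^2}{2\rho}$, and summing the slack over the $T_k=O(\rho\,\mathrm{gap}_k/\vareps^2)$ inner iterations produces an additive term of order $\mathrm{gap}_k$ itself; a naive telescoping recursion then threatens to double the bound at every outer iteration, and over $K=O(\log\vareps^{-1})$ iterations $2^K$ is polynomial in $\vareps^{-1}$, which would ruin the argument. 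The circularity you flag is real, but the clean escape is to drop the telescoping entirely: once feasibility $\Vert\vc(\vx^k)\Vert=O(\mathrm{poly}(k)/\sigma^{k})$ is in hand (and it is, since it needs only the regularity condition, the stationarity guarantee of the previous subproblem, and the unconditional bound $\Vert\vy^{k}\Vert\le y_k$), one bounds $\cL_{\beta_k}(\vx^k,\vy^k)\le B_0+y_k\Vert\vc(\vx^k)\Vert+\frac{\beta_k}{2}\Vert\vc(\vx^k)\Vert^2=\tilde O(1)$ directly from the compactness of $\dom(h)$, with no reference to the previous subproblem's objective value. Paired with your completing-the-square lower bound on $\Phi_k^*$, this gives the uniform gap bound noncircularly, which is exactly what the cited equations of \cite{li2021rate} do; the remainder of your accounting is then correct and matches the paper's.
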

	%\color{blue}
	
	\begin{remark}
		The results in Theorem~\ref{thm:ialm-cplx-2} are novel. To the best of our knowledge, they are the first such results for ZOMs on solving functional constrained black-box optimization. The order-dependence on $\vareps$ matches with the best-known results for FOMs on solving nonconvex composite optimization with convex or nonconvex constraints, e.g., see \cite{lin2019inexact-pp, melo2020iteration, li2021rate}. For the affine-constrained case, we conjecture that the $\tilde{O}(d\vareps^{-\frac{5}{2}})$ query complexity may be reduced to $\tilde{O}(d\vareps^{-2})$ if the nonsmooth term $h$ has some special structure.
		%We are aware of some first-order works on affine equality constrained problems with $\tilde{O}(\vareps^{-2})$ complexity results for some special problem classes. 
		%{\color{blue}(zhang and luo). they only have the results for the deterministic case,right?}
		For example, the FOM in \cite{zhang2020global} achieves an $\tilde{O}(\vareps^{-2})$ complexity result for solving affine-constrained smooth nonconvex optimization. It would be an interesting future work to combine the technique in \cite{zhang2020global} with ours to develop a ZOM that can achieve $\tilde{O}(d\vareps^{-2})$ query complexity. %For such structured problems, we may proven when $h$ is the indicator of a polytope constraint set. However, we are unaware of such $\tilde{O}(\vareps^{-2})$ results for a general $h$. Moreover, the analysis of the above and other works with $\tilde{O}(\vareps^{-2})$ results heavily exploited the affinity of $c(\cdot)$ and did not include the case of nonconvex $c(\cdot)$.
	\end{remark} % add affine EQ + convex InEQ to appendix?
	%\color{black}

	%%% Move this & more the proof of APCG cplx in appendix.
	\iffalse
	The next theorem gives the convergence rate of Algorithm \ref{alg:apcg} in terms of objective error.
	\begin{theorem}\label{thm:apcg_conv}
		Let $\{\vx^t\}_{t=0}^K$ be generated from Algorithm \ref{alg:apcg}, then
		\begin{equation}\label{eq:apcg_conv}
		\EE[]
		\end{equation}
	\end{theorem}
	The next theorem gives the query complexity result of Algorithm \ref{alg:apcg}.
	\begin{theorem}\label{thm:total-iter-apcg}  % Update for ZO-APCG & need to achieve eps-stat!
		Given $\vareps>0$, within at most $T$ iterations, Algorithm~\ref{alg:apcg} will output a solution $\vx^{T}$ that satisfies $\dist\big(\vzero,\partial F(\vx^{T})\big) \le \vareps$, 
		where 
		$$\textstyle T= 11111.$$
	\end{theorem}
	\fi
	%%%
	
	%%%%%%%%%%%%%%%%%%%%%%%%%%%%%%%%%%%%%%%%%%%%%
	
	\section{Numerical Results}\label{sec:num}
	
	In this section, we conduct numerical experiments to demonstrate the performance of our proposed ZO-iALM. We consider the problem of resource allocation in sensor networks and the adversarial example generation problem. All the tests were performed in MATLAB 2019b on a Macbook Pro with 4 cores and 16GB memory. Due to the page limitation, we put additional numerical experiments in the Appendix. They are on nonconvex linearly-constrained quadratic programs (LCQP), on the unconstrained strongly-convex quadratic programs (USCQP) to test the core solver ZO-APCU, and on the logistic regression to test different multi-point coordinate gradient estimators. %additional numerical experiments (including the adversarial example generation problem and nonconvex linearly-constrained quadratic programs (LCQP) 
	We emphasize here that the proposed ZO-APCU requires significantly fewer queries to reach a near-stationary point to the USCQP problem compared to a few existing methods, and that the use of more points in coordinate gradient estimator can lead to higher accuracy.
	
	%\color{blue}
	%\color{black}
	
	\subsection{Resource Allocation in Sensor Networks}\label{subsec:exp2}
	In this subsection, we test our proposed ZO-iALM on the resource allocation problem in sensor networks
	\cite{liu2016sensor}. The problem aims at minimizing the estimation error of a random vector with a Gaussian prior probability density function, subject to a constraint on the total number of sensor activations. It can be formulated as 
	\begin{equation}\label{eq:sensor_sel_orig}
	\begin{aligned}
	\min_{\vw \in \RR^d} & \ \tr (\Sigma^{-1} + \vH^{\top}(\vw \vw^{\top} \circ \vR^{-1})\vH)^{-1}, \\ 
	\st & \ \vone^{\top}\vw \le s, \vw \in \{0,1\}^d,
	\end{aligned}
	\end{equation}
	where each $w_i \in \{0,1\}$ denotes whether the $i$th sensor is selected, $\vH \in \RR^{d \times d}$ is the observation matrix, $\Sigma \in \RR^{d \times d}$ is the MSE source statistics, and $\vR \in \RR^{d \times d}$ is the noise covariance matrix. We assume that $\Sigma$ and $\vR$ are symmetric, and $\vR$ has small off-diagonal entries. Details of the formulation \eqref{eq:sensor_sel_orig} can be found in \cite{liu2016sensor}. 
	
	ZO optimization methods have been applied in the literature to problem \eqref{eq:sensor_sel_orig}, in order to avoid the involved first-order gradient computation \cite{liu2018zeroth}. The use of a ZO solver enables the design of resource management with least prior knowledge, e.g., without having access to the %\textcolor{blue}{sensitive}
	sensing model information encoded in $\mathbf H$.
	%However, the formulation \eqref{eq:sensor_sel_orig} is difficult to solve due to the combinatorial 
	The constraint $\vw \in \{0,1\}^d$ is combinatorial. 
	Below, we rewrite the 0-1 constraint to $\vw^2-\vw = \vzero$ and also incorporate the constraint $\vone^{\top}\vw \le s$ into the objective by introducing a (fixed) multiplier $\lambda>0$. More precisely, we apply our ZO-iALM to the problem: %Thus we reformulate \eqref{eq:sensor_sel_orig} into a similar problem:
	\begin{equation}\label{eq:sensor_sel}
	\begin{aligned}
	\min_{\vw \in \RR^d} & \ \tr (\Sigma^{-1} + \vH^{\top}(\vw \vw^{\top} \circ \vR^{-1})\vH)^{-1} + \lambda \vone^{\top}\vw, \\ 
	\st & \ \vw^2-\vw = \vzero.
	\end{aligned}
	\end{equation}
	%where $\lambda$ is the weight parameter indicating the importance of minimizing $\vone^{\top}\vw$, and $\vw^2-\vw = \vzero$ is equivalent to $\vw \in \{0,1\}^d$. The new formulation \eqref{eq:sensor_sel} with nonconvex objective and nonconvex constraints can be solved by our proposed ZO-iALM. Both the objective and constraints in \eqref{eq:sensor_sel} are nonconvex, and the objective is black-box, therefore we use our proposed ZO-iALM to solve \eqref{eq:sensor_sel}. 
	%In each iteration of ZO-iALM, we update the primal variable by inexactly minimize the nonconvex AL function by ZO-iPPM (which uses ZO-APCG to minimize the strongly-convex perturbed AL function) and update the dual variable by a gradient ascent step. 
	
	Since no existing ZOMs are able to handle nonconvex constrained problems, we compare the proposed ZO-iALM to two other methods that replace our ZO-iPPM subroutine with ZO-AdaMM \cite{chen2019zo} and ZO-ProxSGD \cite{ghadimi2016mini} respectively. 
	%Since no other existing methods can solve \eqref{eq:adv_ex}, the baseline methods we compare to follows the same outer iteration as ZO-iALM, but either replace ZO-iPPM by other existing ZOMs that minimizes the sum of a nonconvex nonsmooth$+$smooth objective, including ZO-AdaMM \cite{chen2019zo} and ZO-proxSGD \cite{ghadimi2016mini}. 
	%Numerical results show that our proposed ZO-iALM outperforms all compared methods mentioned in Section \ref{sec:real_world_ex_1}, see Section \ref{sec:num}. 
	
	We set $d = 80$, $\lambda = 0.5$, and $\vareps = 0.5$. Following \cite{liu2016sensor}, we construct $\vH = \frac{1}{2}(\bar{\vH}+\bar{\vH}^\top)$ with each entry of $\bar{\vH} \in \RR^{d \times d}$ generated from the uniform distribution $\cU(0,1)$, $\vR = (\frac{1}{2}(\bar{\vR}+\bar{\vR}^\top))^{-1}$ with each entry of $\bar{\vR} \in \RR^{d \times d}$ generated from $\cU(0,10^{-3})$, and $\Sigma = \vI$. In each call to the ZO-iPPM subroutine, we set the smoothness parameter to $\hat L_k = 50 + 0.3\beta_k$. We tune the parameters of ZO-AdaMM to $\alpha = 1, \beta_1 = 0.75, \beta_2 = 1$, and fix the step size to $0.01$ in ZO-ProxSGD. For each method, we choose $a = 10^{-6}$ as the sampling radius and $w_k = \frac{1}{\Vert \vc(\vx^k) \Vert}$ as the dual step size.
	%\textcolor{red}{Do the three ZOMs use the same sampling radius?}
	
	In Figure \ref{fig:plot}, we compare the primal residual trajectories of the proposed ZO-iALM, and the iALM with subroutine ZO-AdaMM in \cite{chen2019zo} and ZO-ProxSGD in \cite{ghadimi2016mini}. The dual residuals by all compared methods are below the error tolerance $\vareps$ at the end of each outer loop. In Table~\ref{table:sensor} in the appendix, we also report the primal residual, dual residual, running time (in seconds), and the query count, shortened as \verb|pres|, \verb|dres|, \verb|time|, and \verb|#Obj|, for each method. From the results, we conclude that the proposed ZO-iALM with any of the three subroutines is able to reach an $\vareps$-KKT point to the resource allocation problem \eqref{eq:sensor_sel}. Moreover, the proposed ZO-iPPM subroutine requires fewer queries than other compared ZOMs to find a specified-accurate stationary point to the nonconvex subproblems.
	
	\begin{figure}[t] % combine 6 figures into 1 to save space.
		\begin{center}
			%\resizebox{1 \textwidth}{!}{
			Resource Allocation in Sensor Networks \\
			%size $n = 80$ \\%[-0.1cm]
			\includegraphics[width=0.3\textwidth]{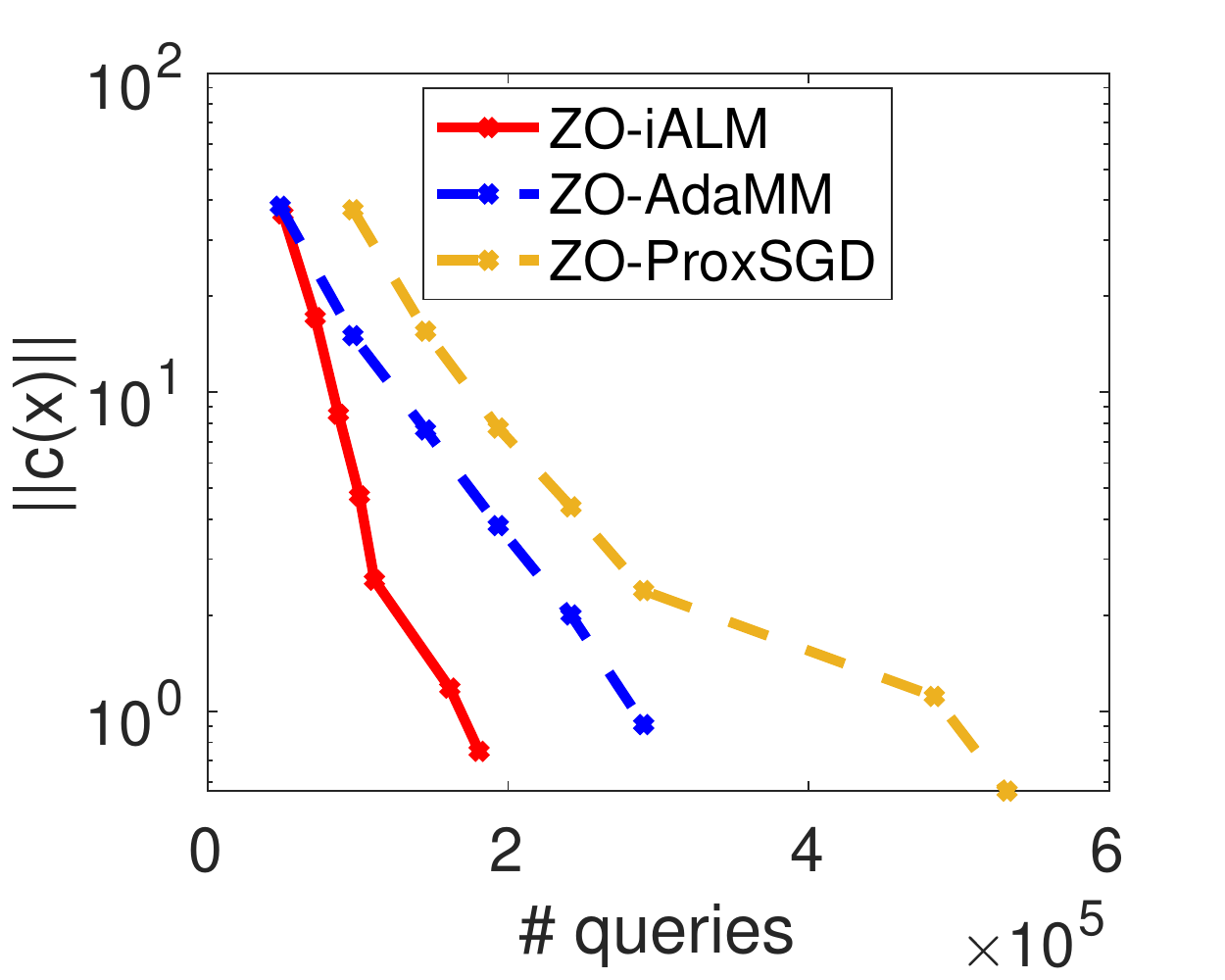} % 0.3 for 2 cols, 0.6 for 1 col. %\\[0.3cm]
			%}
		\end{center}
		\vspace{-4mm}
		\caption{Comparison of iALM on solving \eqref{eq:sensor_sel} with different subroutines: the proposed ZO-iALM, ZO-AdaMM in \cite{chen2019zo}, and ZO-ProxSGD in \cite{ghadimi2016mini}. The plots show primal residuals. The markers denote the outer iterations in iALM.
			Dual residuals for all methods are below the given tolerance $\vareps$.}\label{fig:plot}
	\end{figure}
	
	%\color{blue} %%%
	\subsection{Adversarial Example Generation}\label{subsec:atk}
	The problem of adversarial example generation for a black-box regression model \cite{liu2020min} under both $L_0$ and $L_{\infty}$-norm constraints can be formulated as
	\begin{equation}\label{eq:adv_ex_orig}
	\max_{\Vert \Delta \Vert_{\infty} \le \vareps_{\infty}} f_{\theta}(\vx + \Delta), \st \Vert \Delta \Vert_0 \le \vareps_0,
	\end{equation}
	where $f_\theta(\cdot)$ is a loss function of a black-box regression model parameterized by $\theta$ that is trained over the dataset $\vx = [\vx_1^{\top};\dots;\vx_m^{\top}] \in \RR^{m \times d}$, $\Delta \in \RR^d$ is the data perturbation, and $\vx+\Delta$ denotes adding $\Delta$ to each $\vx_i$.
	
	The constraint $\Vert \Delta \Vert_0 \le \vareps_0$ is combinatorial. To relax it to a continuous one, we introduce a binary vector $\hat{M}$ as a mask and put the constraint onto $\hat{M}$. More precisely, replace $\Delta$ in \eqref{eq:adv_ex_orig} by $\hat{M} \circ \Delta$, where $\circ$ denotes the Hadamard (component-wise) product. Then the constraint $\Vert \Delta \Vert_0 \le \vareps_0$ is relaxed to $\hat{M}_i\in \{0,1\},\forall\, i$ and $\vone^{\top} \hat{M}\le \vareps_0$. By further incorporating the constraint $\vone^{\top} \hat{M}\le \vareps_0$ into the objective by introducing a (fixed) multiplier $-\lambda <0$ and rewrite $\hat{M}_i\in \{0,1\},\forall\, i$ into $\hat{M}^2 - \hat{M} = \vzero$, where $\hat{M}^2$ denotes the component-wise square of $\hat{M}$, we have the following reformulation:
	%However, the formulation \eqref{eq:adv_ex_orig} is difficult to solve because the $L_0$ constraint $\Vert \Delta \Vert_0 \le \vareps_0$ is combinatorial. Thus we relax \eqref{eq:adv_ex_orig} into the following problem: %\textcolor{red}{how to have the reformulation?}
	\begin{equation}\label{eq:adv_ex}
	\max_{\substack{\hat{M}, \Delta \in \RR^d \\ \Vert \Delta \Vert_{\infty} \le \vareps_{\infty} }} f_{\theta}(\vx_0 + \hat{M} \circ \Delta) - \lambda \vone^{\top} \hat{M}, \st \hat{M}^2 - \hat{M} = \vzero.
	\end{equation}
	%where $\lambda$ is the weight parameter indicating the importance of minimizing $\vone^{\top}M$, $M^2-M = \vzero$ is equivalent to $M \in \{0,1\}^d$, $\circ$ denotes the Hadamard (component-wise) product, $M^2$ denotes the component-wise square of $M$, $M \in \RR^d$ and $\Delta \in \RR^d$ are optimization variables, $M \circ \Delta$ is the data perturbation, where $M$ represents the $L_0$ norm mask controlling how many entries of $M \circ \Delta$ are nonzero, and $\Delta$ represents the perturbation without the $L_0$ norm constraint. Note that given $M \in \{0,1\}^d$, minimizing $\vone^\top M$ is equivalent to minimizing the number of $1$'s of $M$ with other entries equal to $0$, which represents the minimization of the $L_0$ norm constraint $\Vert M \circ \delta \Vert_0$. 
	
	We test the proposed ZO-iALM on the adversarial example generation problem \eqref{eq:adv_ex}.
	In the test, we use the ovarian cancer dataset \cite{conrads2004high, petricoin2002use} that are from $m = 216$ patients. Each data point has $d = 4,000$ features and a label indicating whether the corresponding patient has ovarian cancer. We first use MATLAB's built-in lasso function (with $\lambda = 0.01$) to train a LASSO regression model parameterized by $\theta$. With the trained model, we treat the regression loss $f_{\theta}(\cdot)$ as a ZO oracle and perform black-box attack on it. Let $\vx \in \RR^{m \times d}$ denote the data matrix. We then solve the ZO formulation \eqref{eq:adv_ex} to find an adversarial perturbation $M \circ \Delta$ to each row of $\vx$ that near-maximally increases the regression loss $f_{\theta}(\cdot)$. %subject to the $L_0$ and $L_{\infty}$ constraint budgets where 
	In \eqref{eq:adv_ex}, we set $\lambda = 0.01$ and $\vareps_{\infty} = 0.1$. Due to the large variable dimension, we set $\vareps = 1$ in stopping conditions. 
	
	%Since no other existing ZOMs are able to handle nonconvex constrained problems, 
	The same as the previous test, we compare the proposed ZO-iALM to two other methods that replace our ZO-iPPM subroutine with ZO-AdaMM \cite{chen2019zo} and ZO-ProxSGD \cite{ghadimi2016mini} respectively. In each method, we set $a = 10^{-6}$ as the sampling radius and $w_k = \frac{1}{\Vert \vc(\vx^k) \Vert}$ as the dual step size.
	
	Let $(\hat{M},\Delta)$ be one iterate obtained by one method on solving \eqref{eq:adv_ex}. Then $\tilde\Delta \gets \hat{M} \circ \Delta$ is the data perturbation. To recover the solution to \eqref{eq:adv_ex_orig}, we project $\tilde\Delta$ to the set $\{\Delta: \Vert \Delta \Vert_0 \le 20, \Vert \Delta \Vert_{\infty} \le 0.1\}$. In Figure \ref{fig:atk}, we plot the trajectory of the loss objective 
	$f_{\theta}$ by all methods at the processed iterates of perturbed data. %subject to the $L_0$ constraint $\Vert M \circ \Delta \Vert_0 \le 20$ and $L_{\infty}$ constraint $\Vert M \circ \Delta \Vert_{\infty} \le 0.01$. %We also conducted experiments by replacing our ZO-APCU inner solver with ZO-ARS in \cite{nesterov2017random}, but the latter fails to converge in practice. %\textcolor{red}{(For this test?)}
	From the results, we see that the data perturbation created by the proposed ZO-iALM %subject to the $L_0$ constraint $\Vert M \circ \Delta \Vert_0 \le 20$ and $L_{\infty}$ constraint $\Vert M \circ \Delta \Vert_{\infty} \le 0.1$ 
	increases the loss function faster (namely, creates more successful attacks) than other compared methods. 
	
	\begin{figure}[t] % combine 6 figures into 1 to save space.
		\begin{center}
			%\resizebox{1 \textwidth}{!}{
			Adversarial Example Generation \\
			%size $n = 80$ \\%[-0.1cm]
			\includegraphics[width=0.3\textwidth]{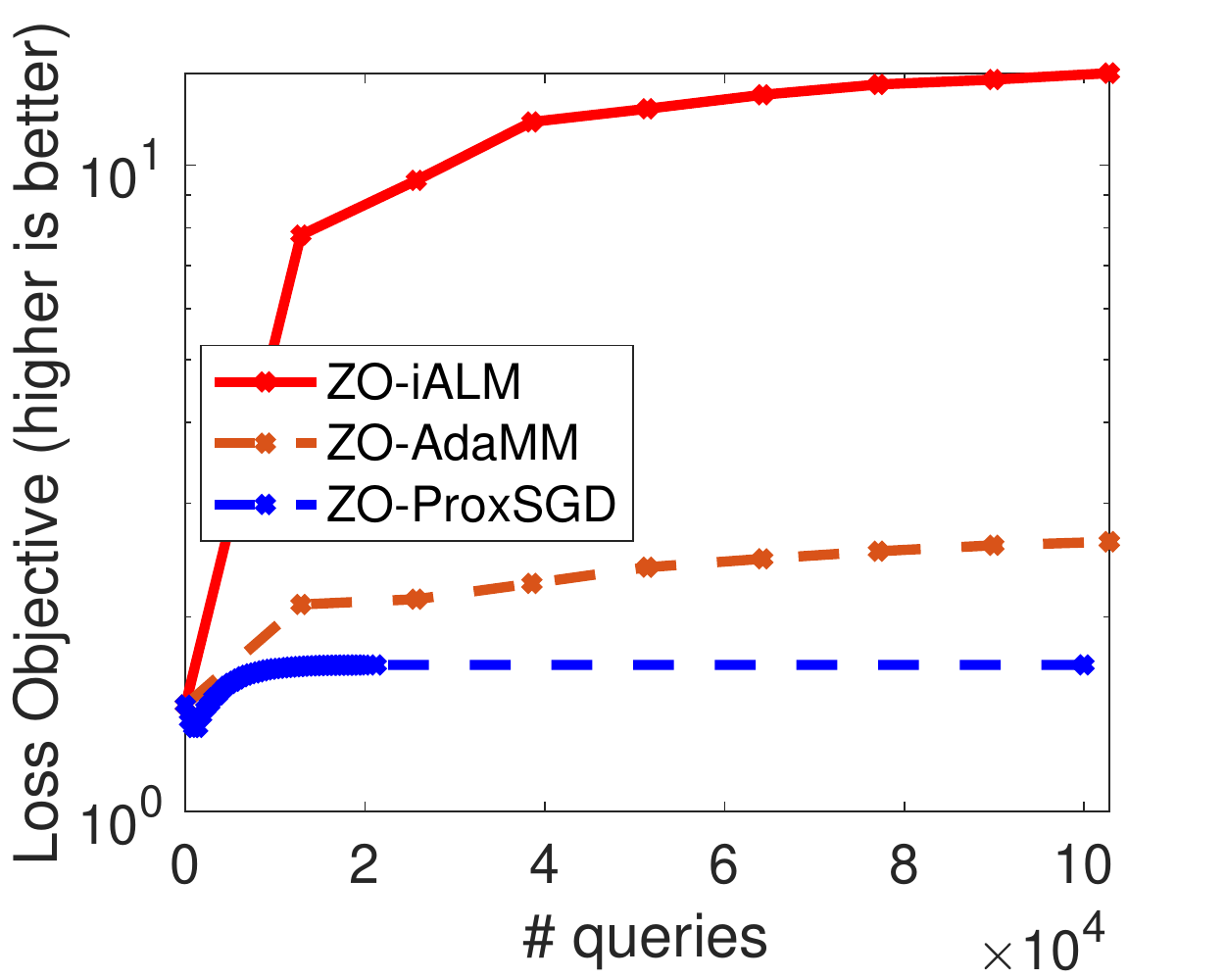} % \\[0.3cm]
			%}
		\end{center}
		%\vspace{-0.2cm}
		\vspace{-4mm}
		\caption{Comparison of iALM on solving \eqref{eq:adv_ex} with different subroutines: the proposed ZO-iALM, ZO-AdaMM in \cite{chen2019zo}, and ZO-ProxSGD in \cite{ghadimi2016mini}. The plots show the loss objective that we attack under the same $L_0$ and $L_{\infty}$ constraints.}\label{fig:atk}
	\end{figure}
	\color{black} %%%
	
	\section{Conclusion}
	
	In this paper, we propose a novel zeroth-order inexact augmented Lagrangian method (ZO-iALM) to solve black-box optimization problems that involve a composite (i.e., smooth+nonsmooth) objective and nonlinear functional constraints. The kernel subproblems that we solve during the ZO-iALM are black-box strongly-convex composite problems with coordinate structure. To most efficiently solve these subproblems, we design a zeroth-order accelerated proximal coordinate update (ZO-APCU) method. In addition, in order to be able to produce high-accurate solutions, we give a new multi-point coordinate gradient estimator and use it in our designed ZO-APCU. All our proposed zeroth-order methods achieve similar-order complexity results as the best-known results obtained by first-order methods, with a difference up to a factor of variable dimension. %and the difference is Best-known convergence results and complexity analysis are provided, and a new multi-point gradient estimator is used to adjust the estimation error for high-precision ZO optimization problems. 
	Besides the novel and best theoretical results, our proposed ZO-iALM can also perform well numerically, which is demonstrated by experiments on practical machine learning tasks and classical optimization problems. %including resource allocation and adversarial example generation demonstrate the significantly improved performance of our proposed method.
	% original conclusion.
	
	%%%%%%%%%%%%%%%%%%%%%%%%%%%%%%%%%%%
	\clearpage 
	\newpage 
	
	\section*{Acknowledgements}
	
	%$^*$The authors Pin-Yu Chen, Sijia Liu,  Songtao Lu, and Yangyang Xu are listed in alphabetical order.\\
	
	%\noindent 
	This work was supported by the Rensselaer-IBM AI Research Collaboration (\url{http://airc.rpi.edu}), part of the IBM AI Horizons Network (\url{http://ibm.biz/AIHorizons}). The work of Y. Xu is partly supported by NSF Award 2053493 and the RPI-IBM AIRC faculty fund.
	
	%{\small 
	\bibliography{optim}
	%}
	%%%%%%%%%%%%%%%%%%%%%%%%%%%%%%%%%%%
	%\clearpage
\appendix

\section{Proofs}
In this section, we provide detailed proofs of our theorems.
\subsection{Proof of Theorem~\ref{thm:coor_grad_coef}}
Denote $m = \frac{p}{2}$. Note that the constants $C_1, \cdots, C_m$ in Theorem~\ref{thm:coor_grad_coef} satisfy the following $m$ equalities:
\begin{equation}\label{eq:const_lin_sys}
\begin{aligned} 
C_1 + 2C_2 + \cdots + mC_m &= \frac{1}{2a},  \\
C_1 + 2^3 C_2 + \cdots + m^3 C_m &= 0,  \\
& \vdots  \\
C_1 + 2^{2m-1} C_2 + \cdots + m^{2m-1} C_m &= 0.
\end{aligned}
\end{equation}
Since $f$ is $M_j$ coordinate $j$-smooth, by plugging $b \in \{ ma, \cdots, a, -a, \cdots, -ma \}$ into Lemma \ref{lemma:smooth_ineq}, we have the following $2m$ inequalities:
\begin{align*}
\textstyle -\frac{|C_m|M_j}{(j+1)!}(ma)^{j+1} \le & \ C_m \Big( f(\vx+mae_i) - f(\vx) \\ 
& - ma\nabla_i f(\vx) - \cdots - \frac{m^j a^j}{j!} \nabla_i^j f(\vx) \Big) \\ 
\le & \ \frac{|C_m|M_j}{(j+1)!}(ma)^{j+1}, \\
& \vdots \\
\textstyle -\frac{|C_1|M_j}{(j+1)!}a^{j+1} \le & \ C_1 \Big( f(\vx+ae_i) - f(\vx) - a\nabla_i f(\vx)\\ 
& - \cdots - \frac{ a^j}{j!} \nabla_i^j f(\vx) \Big) \\ 
\le & \ \frac{|C_1|M_j}{(j+1)!}a^{j+1}, \\
\textstyle -\frac{|C_1|M_j}{(j+1)!}a^{j+1} \le & \ -C_1 \Big( f(\vx-ae_i) - f(\vx) + a\nabla_i f(\vx)\\ 
& - \cdots - \frac{ (-1)^j a^j}{j!} \nabla_i^j f(\vx) \Big)\\ 
\le & \ \frac{|C_1|M_j}{(j+1)!}a^{j+1}, \\
& \vdots \\
\textstyle -\frac{|C_m|M_j}{(j+1)!}(ma)^{j+1} \le & \ -C_m \Big( f(\vx-mae_i) - f(\vx) \\ 
& \hspace{-1cm} + ma\nabla_i f(\vx) - \cdots   - \frac{(-1)^j m^j a^j}{j!} \nabla_i^j f(\vx) \Big)\\ 
\le & \ \frac{|C_m|M_j}{(j+1)!}(ma)^{j+1}.
\end{align*}
Summing up the above $2m$ inequalities, we have
\begin{align*}
& \Big| \tilde{\nabla}_i f(\vx)  -  (C_1+2C_2+\cdots+mC_m)2a \nabla_i f(\vx) \\ 
& - (C_1+2^3 C_2+ \cdots + m^3 C_m)\frac{2a^3}{3!} \nabla_i^{3} f(\vx) - \\
& \cdots - (C_1+2^{2m-1}C_2+\cdots+m^{2m-1}C_m) \\ 
& \frac{2a^{2m-1}}{(2m-1)!}\nabla_i^{2m-1} f(\vx) \Big| \\ 
\le & \ \sum_{q=1}^{m} |C_q| \frac{M_j}{(j+1)!}(qa)^{j+1}.
\end{align*}
The above equality combined with \eqref{eq:const_lin_sys} gives us
\begin{equation*}
|\tilde{\nabla}_i f(\vx) - \nabla_i f(\vx)| \le \sum_{q=1}^{m} |C_q| \frac{M_j q^{j+1}}{(j+1)!}a^{j+1},
\end{equation*}
which is exactly \eqref{eq:grad_err_bd}.

%%%%%%%%%%%%%%%%%%%%%%%%%%%%%%%%%%%%%%%%%
\subsection{Proof of Theorem~\ref{thm:apcg_cplx_expected}}
To prove Theorem~\ref{thm:apcg_cplx_expected}, first we bound the objective error in Theorem \ref{thm:apcg_conv} below, next we bound the stationarity gap by the objective error in Theorem \ref{thm:subdiff_obj} below, then we combine these two theorems with the assumed parameter settings and get the desired results. Below, we present the detailed proof of Theorem~\ref{thm:apcg_cplx_expected}.\\

Denote $$\tilde{\partial} F(\vx) = \tilde{\nabla} G(\vx) + \partial H(\vx).$$ By the updates of Algorithm \ref{alg:apcg}, following the proof of %[ref: apcg lemma 2] and 
Lemmas 2 and 3 in \cite{lin2014accelerated}, we immediately have the following two lemmas.

\begin{lemma}\label{apcg_lemma_2}
Let $\{\vx^k\}$ be generated from Algorithm~\ref{alg:apcg}. Then we have $\vx^k = \sum_{l=0}^{k} \theta_l^k \vz^l$, where $\theta_0^0 = 1, \theta_0^1 = 1-\sqrt{\mu}, \theta_1^1 = \sqrt{\frac{\mu}{L}}$, and $\forall k \ge 1$,\\
$\theta_l^{k+1}=
\begin{cases}
\sqrt{\mu}, \text{ if } l = k+1,\\
(1-\frac{\mu}{dL})\frac{(d+1)\alpha}{\alpha+1}-\frac{(1-\alpha)\mu}{dL\alpha}, \text{ if } l = k,\\
(1-\frac{\mu}{dL})\frac{1}{\alpha+1}\theta_l^k, \text{ if } l = 0,\dots,k-1.
\end{cases}
$
\end{lemma}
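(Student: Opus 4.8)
The plan is to establish the representation $\vx^k = \sum_{l=0}^{k} \theta_l^k \vz^l$ by induction on $k$, reading off the recursion for the coefficients at each step. The base case is immediate: Algorithm~\ref{alg:apcg} initializes $\vz^0 = \vx^0$, so $\vx^0 = \vz^0$ and $\theta_0^0 = 1$. The values $\theta_0^1, \theta_1^1$ are then obtained directly from one application of the update rules.

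For the inductive step, I would first eliminate the auxiliary iterate $\vy^k$ so that the update for $\vx^{k+1}$ is expressed purely in terms of $\vx^k$, $\vz^k$, and $\vz^{k+1}$. Substituting $\vy^k = \frac{\vx^k + \alpha \vz^k}{1+\alpha}$ into $\vx^{k+1} = \vy^k + d\alpha(\vz^{k+1}-\vz^k) + d\alpha^2(\vz^k-\vy^k)$ and grouping terms yields an expression of the form
\begin{equation*}
\vx^{k+1} = \frac{1-d\alpha^2}{1+\alpha}\,\vx^k + \left[\frac{(1-d\alpha^2)\alpha}{1+\alpha} + d\alpha^2 - d\alpha\right]\vz^k + d\alpha\,\vz^{k+1}.
\end{equation*}
At this point I would invoke the inductive hypothesis $\vx^k = \sum_{l=0}^{k}\theta_l^k \vz^l$ and collect the coefficient of each $\vz^l$ on the right-hand side.

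The coefficients then fall out by inspection, using the identity $d\alpha^2 = \frac{\mu}{dL}$ that follows from $\alpha = \frac{1}{d}\sqrt{\mu/L}$. For $l \le k-1$, only the term $\frac{1-d\alpha^2}{1+\alpha}\vx^k$ contributes, giving $\theta_l^{k+1} = (1-\frac{\mu}{dL})\frac{1}{\alpha+1}\theta_l^k$. The coefficient of $\vz^{k+1}$ is simply $d\alpha$. The single index requiring care is $l = k$, where two contributions combine: the term $\frac{1-d\alpha^2}{1+\alpha}\theta_k^k$ coming from the expansion of $\vx^k$ (with top coefficient $\theta_k^k = d\alpha$) together with the explicit $\vz^k$ bracket displayed above. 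Simplifying this sum with the further identity $\frac{(1-\alpha)\mu}{dL\alpha} = d\alpha(1-\alpha)$ produces the claimed formula $\theta_k^{k+1} = (1-\frac{\mu}{dL})\frac{(d+1)\alpha}{\alpha+1} - \frac{(1-\alpha)\mu}{dL\alpha}$.

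The argument is entirely mechanical once $\vy^k$ has been eliminated, so I do not anticipate a genuine obstacle; the only place demanding attention is the bookkeeping for the $l=k$ coefficient, where the $\vz^k$ terms appearing explicitly in the $\vx^{k+1}$ update must be merged with the top term of the inductive expansion of $\vx^k$, and the two algebraic identities above must be used to match the stated closed form. This is precisely the specialization of the derivation of Lemmas~2 and~3 in \cite{lin2014accelerated} to the coordinate-update parameters of ZO-APCU.
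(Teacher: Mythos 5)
Your induction is correct and is exactly the argument the paper relies on: the paper gives no proof of its own but defers to Lemma~2 of \cite{lin2014accelerated}, whose proof is precisely this elimination of $\vy^k$ followed by coefficient bookkeeping, and your algebra (including the identities $d\alpha^2=\frac{\mu}{dL}$ and $\frac{(1-\alpha)\mu}{dL\alpha}=d\alpha(1-\alpha)$) checks out for the $l\le k-1$ and $l=k$ cases. The only point to flag is that your derivation yields the top coefficient $\theta_{k+1}^{k+1}=d\alpha=\sqrt{\mu/L}$, which is what consistency with $\theta_1^1=\sqrt{\mu/L}$ and with the stated closed form for $\theta_k^{k+1}$ requires; the lemma's ``$\sqrt{\mu}$'' (and likewise ``$1-\sqrt{\mu}$'' for $\theta_0^1$) is a typo for $\sqrt{\mu/L}$ (resp.\ $1-\sqrt{\mu/L}$), not an error in your argument.
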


\begin{lemma}\label{apcg_lemma_3} 
Let $\hat{\psi}_k := \sum_{l=0}^{k} \theta_l^k H(\vz^l)$. Then $\forall k \ge 0$, we have $H(\vx^k) \le \hat{\psi}_k$ and 
$$\EE_{i_k}[\hat{\psi}_{k+1}] \le \alpha H(\tilde{\vz}^{k+1})+(1-\alpha) \hat{\psi}_k,$$ 
where 
\begin{align}\label{apcg_eq11}
\tilde{\vz}^{k+1} := \argmin_{\vx \in \RR^d} \Big\{ \frac{d\alpha L}{2} \Vert \vx-(1-\alpha)\vz^k-\alpha \vy^k \Vert^2 \nonumber \\ 
+\langle \tilde{\nabla}G(\vy^k), \vx-\vy^k \rangle + H(\vx) \Big\} .
\end{align}
\end{lemma}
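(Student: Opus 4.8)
The plan is to prove the two claims separately, adapting the estimate-sequence machinery behind Lemmas~2 and~3 of \cite{lin2014accelerated} to the zeroth-order coordinate updates of Algorithm~\ref{alg:apcg}. The first claim is a convexity statement about the representation of $\vx^k$ supplied by Lemma~\ref{apcg_lemma_2}, and the second is an estimate-sequence recursion driven by taking conditional expectation over the uniformly sampled coordinate $i_k$.

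For the inequality $H(\vx^k)\le\hat\psi_k$, I would start from $\vx^k=\sum_{l=0}^k\theta_l^k\vz^l$ (Lemma~\ref{apcg_lemma_2}) and first show by induction on $k$, using the three-case recursion for $\theta_l^{k+1}$, that the coefficients satisfy $\theta_l^k\ge 0$ and $\sum_{l=0}^k\theta_l^k=1$. The base case $\theta_0^0=1$ is immediate, and the inductive step reduces to checking that the new top coefficient together with the rescaled old coefficients again sum to one, which is exactly the normalization the recursion is built to preserve. Once $\{\theta_l^k\}_{l=0}^k$ is known to be a convex-combination vector, the bound $H(\vx^k)=H\big(\sum_l\theta_l^k\vz^l\big)\le\sum_l\theta_l^k H(\vz^l)=\hat\psi_k$ follows from Jensen's inequality and the convexity of $H$ (Assumption~\ref{assump:sep_apcg}).

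For the recursion on $\hat\psi_{k+1}$, the plan is a conditional-expectation argument exploiting the separability $H=\sum_i H_i$. The structural fact I would use is that the $\vz$-update in Algorithm~\ref{alg:apcg} keeps a separable quadratic centered at $\vw^k:=(1-\alpha)\vz^k+\alpha\vy^k$ while only coordinate $i_k$ carries the linear and $H_{i_k}$ terms; hence $\vz^{k+1}$ equals $\vw^k$ off coordinate $i_k$ and equals the full proximal step $\tilde\vz^{k+1}$ of \eqref{apcg_eq11} in coordinate $i_k$. Averaging $H(\vz^{k+1})=\sum_{j\ne i_k}H_j(\vw^k_j)+H_{i_k}(\tilde\vz^{k+1}_{i_k})$ over $i_k\sim\mathcal{U}([d])$ yields the identity $\EE_{i_k}[H(\vz^{k+1})]=\tfrac{d-1}{d}H(\vw^k)+\tfrac1d H(\tilde\vz^{k+1})$. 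I would then bound $H(\vw^k)\le(1-\alpha)H(\vz^k)+\alpha H(\vy^k)$ by convexity, bound $H(\vy^k)\le\tfrac{1}{1+\alpha}\hat\psi_k+\tfrac{\alpha}{1+\alpha}H(\vz^k)$ using $\vy^k=\tfrac{\vx^k+\alpha\vz^k}{1+\alpha}$ together with the first inequality just established, and expand $\hat\psi_{k+1}=\sum_{l=0}^{k+1}\theta_l^{k+1}H(\vz^l)$ via the $\theta$-recursion, splitting off the $l=k+1$ and $l=k$ terms.

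The hard part will be the coefficient bookkeeping in this final step: after substituting these bounds and taking $\EE_{i_k}$, one must verify that the coefficient multiplying $H(\tilde\vz^{k+1})$ is exactly $\alpha$ and that the remaining terms collapse precisely into $(1-\alpha)\hat\psi_k$. This closure is what the three-case form of the $\theta_l^{k+1}$ recursion (with $\alpha=\tfrac1d\sqrt{\mu/L}$) is engineered to produce, so the analysis must match the constants $\theta_{k+1}^{k+1}$, $\theta_k^{k+1}$, and the contraction factor faithfully, and the inequality in the statement leaves room to discard any nonnegative residual left over from the convexity bounds on the off-coordinate terms. Since the scheme is that of \cite{lin2014accelerated}, the only genuinely new point to check is that replacing the exact coordinate gradient by the estimator $\tilde\nabla_{i_k}G$ leaves this purely $H$-based recursion unchanged, which holds because $\tilde\vz^{k+1}$ in \eqref{apcg_eq11} is defined with the same estimator.
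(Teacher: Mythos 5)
Your proposal is correct and takes essentially the same route as the paper, which proves this lemma by directly invoking the estimate-sequence argument of Lemmas 2 and 3 in \cite{lin2014accelerated}: the convex-combination property of the $\theta_l^k$ gives $H(\vx^k)\le\hat\psi_k$ via convexity of $H$, and conditioning on the uniformly sampled $i_k$ (using that $\vz^{k+1}$ agrees with $(1-\alpha)\vz^k+\alpha\vy^k$ off coordinate $i_k$ and with $\tilde\vz^{k+1}$ on coordinate $i_k$, exactly as recorded in \eqref{apcg_eq12}) yields the stated recursion. Your coefficient bookkeeping does close exactly because $\tfrac{\mu}{dL}=d\alpha^2$ makes the combined factor on the old terms collapse to $1-\alpha$, and your final observation is right that the gradient estimator plays no role here since the recursion involves only $H$.
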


Theorem~\ref{thm:apcg_conv} below extends from Theorem 1 in \cite{lin2014accelerated}, and gives the convergence rate of Algorithm~\ref{alg:apcg}.
\begin{theorem}[ZO-APCG convergence rate]\label{thm:apcg_conv}
Let $\{ \vx^t \}_{t=0}^{K}$ be generated from Algorithm \ref{alg:apcg}. Then
\begin{align}\label{eq:apcg_conv}
\EE [F(\vx^K)]-F^* \le \Big( 1-\frac{1}{d}\sqrt{\frac{\mu}{L}} \Big)^K \Big(F(\vx^0)-F^* \nonumber \\ 
+ \frac{\mu}{2}\Vert \vx^0-\vx^* \Vert^2 \Big) + ED + \sum_{i=1}^{d} E_i D_i.
\end{align}
\end{theorem}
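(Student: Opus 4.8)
The plan is to extend the convergence analysis of the exact APCG method (Theorem~1 in \cite{lin2014accelerated}) to the zeroth-order setting, treating the coordinate gradient estimation error as a controlled perturbation inserted wherever the exact gradient is replaced by $\tilde\nabla_{i_k}G$. The backbone is the Lyapunov function
\[
A_k := \big(F(\vx^k)-F^*\big) + \frac{\mu}{2}\Vert \vz^k-\vx^* \Vert^2,
\]
for which, using $\vz^0=\vx^0$, the homogeneous part of \eqref{eq:apcg_conv} is exactly $(1-\alpha)^K A_0$. The goal is a one-step recursion of the form $\EE_{i_k}[A_{k+1}] \le (1-\alpha)A_k + \alpha\,r_k$ with $\alpha=\frac{1}{d}\sqrt{\mu/L}$, where the per-step error $r_k$, after displacements are bounded by the diameters $D$ and $D_i$ of Assumption~\ref{assump:composite}, is at most $ED+\sum_{i=1}^{d}E_iD_i$.

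First I would write the coordinate sufficient-decrease inequality for the smooth part: since $\vx^{k+1}-\vy^k$ is supported only on coordinate $i_k$, the coordinate $j$-smoothness of $G$ (Assumption~\ref{assump:coor_smooth}) yields a quadratic upper bound on $G(\vx^{k+1})$ in terms of $\nabla_{i_k}G(\vy^k)$. Replacing the exact coordinate gradient by $\tilde\nabla_{i_k}G(\vy^k)$ produces a cross term $\langle \nabla_{i_k}G(\vy^k)-\tilde\nabla_{i_k}G(\vy^k),\, x^{k+1}_{i_k}-y^k_{i_k}\rangle$, which Cauchy--Schwarz together with the coordinate diameter \eqref{eq:bounded_coor} controls by $E_{i_k}D_{i_k}$; averaging over the uniform index gives the $\sum_i E_iD_i$ contribution. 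Next I would invoke the optimality condition of the coordinate prox subproblem defining $\vz^{k+1}$, combine it with the $\mu$-strong convexity of $G$ and convexity of $H$ to get a three-point inequality against the reference $\vx^*$, and take the conditional expectation over $i_k$. This step converts the single-coordinate update into the full-prox surrogate $\tilde\vz^{k+1}$ of Lemma~\ref{apcg_lemma_3}, whose defining objective uses the full (analytically defined) gradient estimate $\tilde\nabla G(\vy^k)$; its error, of norm at most $E$, dotted against $\Vert \tilde\vz^{k+1}-\vx^* \Vert\le D$ from \eqref{eq:bounded}, is precisely what produces the $ED$ term. The nonsmooth part is absorbed via Lemmas~\ref{apcg_lemma_2} and \ref{apcg_lemma_3}: the representation $\vx^k=\sum_l\theta_l^k\vz^l$ gives $H(\vx^k)\le\hat\psi_k$, and the recursion for $\EE_{i_k}[\hat\psi_{k+1}]$ lets me combine $G$ and $H$ into $F$ without ever performing a full prox on the true iterate.

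Assembling these pieces yields the one-step contraction for $A_k$ with additive error $r_k$. I would then take total expectation, unroll over $k=0,\dots,K-1$, and use $\alpha\sum_{j\ge0}(1-\alpha)^j\le 1$ to collapse the geometrically weighted per-step errors into the single bound $ED+\sum_i E_iD_i$; since $A_K\ge F(\vx^K)-F^*$, this delivers \eqref{eq:apcg_conv}. The main obstacle is exactly this error bookkeeping in the middle step: I must ensure that each occurrence of the gradient estimate enters the Lyapunov recursion with an $\alpha$-weight, so that the geometric summation leaves a constant noise floor rather than an $\tilde O(\sqrt{L/\mu})$ blow-up, and that the two error sources separate cleanly into the coordinate-wise term $\sum_i E_iD_i$ and the full-gradient term $ED$. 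Keeping the contraction factor exactly $(1-\alpha)$ through these perturbations, rather than a degraded $(1-c\alpha)$, while matching the coefficients of the exact APCG analysis, is the delicate part.
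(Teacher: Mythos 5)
Your proposal follows essentially the same route as the paper's proof: both extend the Lyapunov analysis of Theorem~1 in \cite{lin2014accelerated} with the potential $G(\vx^k)+\hat\psi_k-F^*+\frac{\mu}{2}\Vert\vz^k-\vx^*\Vert^2$, obtain the one-step contraction $\EE_{i_k}[\cdot]\le(1-\alpha)(\cdot)+\alpha\big(ED+\sum_i E_iD_i\big)$ with the coordinate-wise error entering through the sufficient-decrease step and the full-gradient error entering through the comparison of $V$ at $\vx^*$, and collapse the geometric sum to the constant noise floor. The only cosmetic difference is that the paper pairs the full-gradient error $E$ with $\Vert\vx^*-\vy^k\Vert\le D$ rather than $\Vert\tilde\vz^{k+1}-\vx^*\Vert$, which changes nothing.
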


\begin{proof}%[Proof of Theorem~thm:apcg_conv] % similar to apcg thm 1 pf.
By updates of $\vz^{k+1}$ and $\vx^{k+1}$ in Algorithm \ref{alg:apcg}, 
\begin{equation}\label{apcg_eq13}
    x^{k+1}_i =
\begin{cases}
y_i^k+d\alpha(z^{k+1}_i-z^k_i)+\frac{\mu}{dL}(z_i^k-y_i^k), \text{ if } i = i_k\\
y_i^k, \text{ if } i \neq i_k.
\end{cases}
\end{equation}
By the update of $\vy^k$,
\begin{equation}\label{apcg_eq32}
\vz^k-\vy^k = -\frac{1}{\alpha}(\vx^k-\vy^k).
\end{equation} 
Also by the update of $\vx^{k+1}$ and $\alpha = \frac{1}{d}\sqrt{\frac{\mu}{L}}$, we have
\begin{align*}
\vx^{k+1}-\vy^k = & \ \sqrt{\frac{\mu}{L}} \vz^{k+1}-(1-\alpha)\sqrt{\frac{\mu}{L}}\vz^k-\frac{\mu}{dL}\vy^k\\ 
= & \ \sqrt{\frac{\mu}{L}}\vz^{k+1}-(1-\alpha)\sqrt{\frac{\mu}{L}}(\vz^k-\vy^k) \\ 
& - \left( (1-\alpha)\sqrt{\frac{\mu}{L}}+\frac{\mu}{dL} \right) \vy^k,
\end{align*} 
which combined with \eqref{apcg_eq32} gives us $$\vx^{k+1}-\vy^k = d\big(\alpha(\vz^{k+1}-\vy^k)+(1-\alpha)(\vx^k-\vy^k)\big).$$
Combining the above equation with \eqref{apcg_eq13} and $L$ smoothness of $G$, we have
\begin{align*}
    G(\vx^{k+1}) \le & \ G(\vy^k)+ \nabla_{i_k} G(\vy^k) (x^{k+1}_{i_k}-y^k_{i_k}) \\ 
    & + \frac{L}{2} \Vert x^{k+1}_{i_k}-y^k_{i_k} \Vert^2\\
    \le & \ (1-\alpha) (G(\vy^k) + d \nabla_{i_k} G(\vy^k) (x^{k}_{i_k}-y^k_{i_k}) ) \\ 
    & + \alpha (G(\vy^k)+d \tilde{\nabla}_{i_k} G(\vy^k) (z^{k+1}_{i_k}-y^k_{i_k}) ) \\
    & + \frac{d^2 L}{2} [\alpha (\vz^{k+1}-\vy^k)+(1-\alpha)(\vx^k-\vy^k)]_{i_k} ^2 \\ 
    & + \alpha d E_{i_k}D_{i_k}.
\end{align*}
Thus by $\mu$-strong convexity of $G$, the choice of $i_k$, and the definition of $\tilde{\vz}^{k+1}$, it holds
\begin{align}\label{apcg_eq33} %111??
\EE_{i_k}[G(\vx^{k+1})] \le & \ (1-\alpha) G(\vx^k)+\alpha \Big( G(\vy^k) + \langle \tilde{\nabla} G(\vy^k), \nonumber \\ 
& \tilde{\vz}^{k+1}-\vy^k \rangle \Big) + \frac{d L}{2}\Vert \alpha (\tilde{\vz}^{k+1}-\vy^k) \nonumber \\
& + (1-\alpha)(\vx^k-\vy^k) \Vert^2  + \alpha \sum_{i=1}^{n} E_{i}D_{i}.
\end{align}
In addition, by \eqref{apcg_eq32}, 
\begin{align}\label{apcg_eq34} 
\frac{dL}{2} \Vert \alpha (\tilde{\vz}^{k+1}-\vy^k)+(1-\alpha)(\vx^k-\vy^k) \Vert^2 \nonumber \\ 
= \frac{\mu }{2d} \Vert \tilde{\vz}^{k+1}-(1-\alpha)\vz^k-\alpha \vy^k \Vert^2.
\end{align}
Combining the above equality with \eqref{apcg_eq33} and $\alpha = \frac{1}{d}\sqrt{\frac{\mu}{L}}$, we have 
\begin{align*}
&~\EE_{i_k}[G(\vx^{k+1})] \\
\le &~ (1-\alpha) G(\vx^k)+\alpha \Big[G(\vy^k) +\langle \tilde{\nabla} G(\vy^k), \tilde{\vz}^{k+1}-\vy^k \rangle \\
& ~+ \frac{\sqrt{\mu L}}{2}\Vert \tilde{\vz}^{k+1}-(1-\alpha)\vz^k-\alpha \vy^k \Vert^2\Big] + \alpha \sum_{i=1}^{d}E_i D_i,
\end{align*}
which combined with Lemma \ref{apcg_lemma_3} gives
\begin{align}\label{apcg_eq35}
    \EE_{i_k}[G(\vx^{k+1})+\hat{\psi}_{k+1}] \le & \ (1-\alpha)(G(\vx^k)+\hat{\psi}_k) \nonumber \\ 
    & +\alpha V(\tilde{\vz}^{k+1}) + \alpha \sum_{i=1}^{d} E_{i}D_{i}.
\end{align}
In the above
\begin{align*}
V(\vx) := & ~G(\vy^k) + \langle \tilde{\nabla}G(\vy^k),\vx-\vy^k \rangle \\
& ~+ \frac{\sqrt{\mu L}}{2} \Vert \vx-(1-\alpha)\vz^k-\alpha \vy^k \Vert^2+H(\vx).
\end{align*}
 By \eqref{apcg_eq11},
\begin{equation}\label{apcg_eq36}
    \tilde{\vz}^{k+1} = \argmin_{\vx \in \RR^d} V(\vx).
\end{equation}
Note $V$ is $\sqrt{\mu L}$-strongly convex, so by \eqref{apcg_eq36}, $V(\vx^*) \ge V(\tilde{\vz}^{k+1})+\frac{\sqrt{\mu L}}{2}\Vert \vx^*-\tilde{\vz}^{k+1} \Vert^2$. Thus, 
\begin{align*}
    V(\tilde{\vz}^{k+1}) \le & \ V(\vx^*) - \frac{\sqrt{\mu L}}{2}\Vert \vx^*-\tilde{\vz}^{k+1} \Vert^2\\
    = & \ G(\vy^k) + \langle \tilde{\nabla} G(\vy^k),\vx^*-\vy^k \rangle + \frac{\sqrt{\mu L}}{2}\Vert \vx^* \\ 
    & - (1-\alpha)\vz^k - \alpha \vy^k \Vert^2 + H(\vx^*)\\
     &- \frac{\sqrt{\mu L}}{2}\Vert \vx^*-\tilde{\vz}^{k+1} \Vert^2\\
    \le & \ G(\vy^k) + \langle \nabla G(\vy^k),\vx^*-\vy^k \rangle + \frac{\sqrt{\mu L}}{2}\Vert \vx^* \\ 
    & - (1-\alpha)\vz^k - \alpha \vy^k \Vert^2 + H(\vx^*)\\ 
    & - \frac{\sqrt{\mu L}}{2}\Vert \vx^*-\tilde{\vz}^{k+1} \Vert^2 + ED\\ 
    \le & \ G(\vx^*) - \frac{\mu}{2} \Vert \vx^*-\vy^k \Vert^2 + \frac{\sqrt{\mu L}}{2} \Vert \vx^* \\ 
    & -(1-\alpha)\vz^k -\alpha \vy^k \Vert^2 + H(\vx^*), \\
    & - \frac{\sqrt{\mu L}}{2} \Vert \vx^*-\tilde{\vz}^{k+1} \Vert^2 + ED,
\end{align*}
where the last inequality holds by $\mu$-strong convexity of $G$.
Combining the last inequality with \eqref{apcg_eq35}, we have
\begin{equation}\label{apcg_eq37}
\begin{aligned}
   &~ \EE_{i_k} [G(\vx^{k+1})+\hat{\psi}_{k+1}] \\
   \le &~ (1-\alpha)(G(\vx^k)+\hat{\psi}_k)+\alpha F^*  - \frac{\alpha \mu}{2} \Vert \vx^*-\vy^k \Vert^2   \\ 
    & \hspace{-0.5cm}- \frac{\mu}{2d} \Vert \vx^* - \tilde{\vz}^{k+1} \Vert^2 + \frac{\mu}{2d} \Vert \vx^*-(1-\alpha)\vz^k-\alpha \vy^k \Vert^2 \\ 
    & + \alpha E D + \alpha \sum_{i=1}^{d}E_i D_i. %111
\end{aligned}
\end{equation}
Now, by the convexity of $\Vert \cdot \Vert^2$, it holds
\begin{equation}\label{apcg_eq38}
\begin{aligned}
&~    \Vert \vx^*-(1-\alpha)\vz^k-\alpha \vy^k \Vert^2\\
\le & ~ (1-\alpha) \Vert \vx^*-\vz^k \Vert^2 + \alpha \Vert \vx^*-\vy^k \Vert^2.
    \end{aligned}
\end{equation}
Note from the updates of Algorithm \ref{alg:apcg},
\begin{equation}\label{apcg_eq12}
    \vz^{k+1}_i = 
    \begin{cases}
    \tilde{\vz}^{k+1}_i, \text{ if } i = i_k,\\
    (1-\alpha)\vz^k_i + \alpha \vy^k_i, \text{ if }i \neq i_k,
    \end{cases}
\end{equation}
which implies %combined with \eqref{apcg_eq38} gives us
\begin{align*}
    & \EE_{i_k}[\frac{\mu}{2}\Vert \vx^*-\vz^{k+1} \Vert^2]\\ 
    = & \ \frac{\mu}{2} \left[ \frac{d-1}{d}\Vert \vx^*-(1-\alpha)\vz^k-\alpha \vy^k \Vert^2 + \frac{1}{d} \Vert \vx^*-\tilde{\vz}^{k+1} \Vert^2 \right]\\
    = & \ \frac{\mu(d-1)}{2d} \Vert \vx^*-(1-\alpha)\vz^k-\alpha \vy^k \Vert^2 + \frac{\mu}{2d} \Vert \vx^*-\tilde{\vz}^{k+1} \Vert^2\\
    = & \ \frac{\mu}{2} \Vert \vx^*-(1-\alpha)\vz^k-\alpha \vy^k \Vert^2 - \frac{\mu}{2d} \Vert \vx^*-(1-\alpha)\vz^k \\ 
    & - \alpha \vy^k \Vert^2 + \frac{\mu}{2d} \Vert \vx^*-\tilde{\vz}^{k+1} \Vert^2\\
    \le & \ \frac{(1-\alpha)\mu}{2} \Vert \vx^*-\vz^k \Vert^2 + \frac{\alpha \mu}{2} \Vert \vx^*-\vy^k \Vert^2  \\ 
    & - \frac{\mu}{2d} \Vert \vx^*-(1-\alpha)\vz^k -\alpha \vy^k \Vert^2 + \frac{\mu}{2d} \Vert \vx^*-\tilde{\vz}^{k+1} \Vert^2,
\end{align*}
where the last inequality follows from \eqref{apcg_eq38}.
Combining the last inequality with \eqref{apcg_eq37}, we get
\begin{align*}
    & \EE_{i_k} \left[ G(\vx^{k+1})+\hat{\psi}_{k+1}+\frac{\mu}{2}\Vert \vx^*-\vz^{k+1} \Vert^2 \right]\\ 
    \le & \ (1-\alpha)(G(\vx^k)+\hat{\psi}_k+\frac{\mu}{2}\Vert \vx^*-\vz^k \Vert^2) + \alpha F^*\\ 
    &+ \alpha E D + \alpha \sum_{i=1}^{d} E_i D_i,
\end{align*}
which implies
\begin{align*}
    & \EE_{i_k} \left[ G(\vx^{k+1})+\hat{\psi}_{k+1} - F^* +\frac{\mu}{2}\Vert \vx^*-\vz^{k+1} \Vert^2 \right] \\ 
    \le & \ (1-\alpha)(G(\vx^k)+\hat{\psi}_k - F^* +\frac{\mu}{2}\Vert \vx^*-\vz^k \Vert^2) \\
    & + \alpha E D + \alpha \sum_{i=1}^{d} E_i D_i.
\end{align*}
Thus,
\begin{align*}
    &\EE \left[ G(\vx^k) + \hat{\psi}_k - F^* + \frac{\mu}{2} \Vert \vx^*-\vz^k \Vert^2 \right] \\
    \le & \ (1-\alpha)^k \left[ F(\vx^0)-F^*+\frac{\mu}{2}\Vert \vx^*-\vx^0 \Vert^2 \right] \\ 
    & + \left( \alpha E D + \alpha \sum_{i=1}^{d} E_i D_i \right) \sum_{t=0}^{k-1}(1-\alpha)^t.\\
\end{align*}
Hence,
\begin{align*}
    &\EE \left[ F(\vx^k) - F^* + \frac{\mu}{2} \Vert \vx^*-\vz^k \Vert^2 \right] \le (1-\alpha)^k [F(\vx^0)-F^* \\
    & +\frac{\mu}{2}\Vert \vx^*-\vx^0 \Vert^2] + E D + \sum_{i=1}^{d} E_i D_i.
\end{align*}
where the inequality holds by $F(\vx^k) \le G(\vx^k) + \hat{\psi}_k$, $\vx^k = \sum_{l=0}^{k} \theta_l^k \vz^l$, and the definition of $\hat{\psi_k}$.
\end{proof}

Theorem~\ref{thm:subdiff_obj} below bounds the subdifferential by the objective error.
%%%%%%%%%%%%%%%%%%%%%%%%%%%%%%%%%%%%%%%%%%%%%%
\begin{theorem}\label{thm:subdiff_obj}
Let 
\begin{equation}\label{eq:postproc0} %111
\hat{\vx} = \argmin_{\vx' \in \RR^d} \langle \tilde{\nabla} G(\vx), \vx'-\vx \rangle + \frac{L}{2}\Vert \vx'-\vx \Vert^2 + H(\vx')  , 
\end{equation}
as in the postprocessing step of Algorithm \ref{alg:apcg}, where $\Vert \tilde{\nabla} G(\vx) - \nabla G(\vx) \Vert \le E$.
Then
\begin{equation}\label{eq:subdiff_obj}
\dist(0, \partial F(\hat{\vx})) \le 4L\sqrt{\frac{2(F(\vx)-F^*)}{\mu}} + 2L\sqrt{\frac{2ED}{\mu}}+E.
\end{equation}
\end{theorem}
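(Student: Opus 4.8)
The plan is to start from the first-order optimality condition of the proximal subproblem \eqref{eq:postproc0}, bound the stationarity gap at $\hat{\vx}$ by a multiple of $\Vert\hat{\vx}-\vx\Vert$, and then control $\Vert\hat{\vx}-\vx\Vert$ by the objective gap $F(\vx)-F^*$ via a triangle inequality passing through the minimizer $\vx^*$ of $F$.

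First I would write the optimality condition. Since the objective in \eqref{eq:postproc0} is $L$-strongly convex and $\hat{\vx}$ is its minimizer, there is some $\xi\in\partial H(\hat{\vx})$ with $\tilde{\nabla}G(\vx)+L(\hat{\vx}-\vx)+\xi=\vzero$. Because $\nabla G(\hat{\vx})+\xi\in\partial F(\hat{\vx})$, I can bound $\dist(\vzero,\partial F(\hat{\vx}))\le\Vert\nabla G(\hat{\vx})+\xi\Vert$. Substituting $\xi=-\tilde{\nabla}G(\vx)-L(\hat{\vx}-\vx)$ and splitting $\nabla G(\hat{\vx})-\tilde{\nabla}G(\vx)=(\nabla G(\hat{\vx})-\nabla G(\vx))+(\nabla G(\vx)-\tilde{\nabla}G(\vx))$, the triangle inequality together with the $L$-smoothness of $G$ and the error bound $\Vert\nabla G(\vx)-\tilde{\nabla}G(\vx)\Vert\le E$ yields $\dist(\vzero,\partial F(\hat{\vx}))\le 2L\Vert\hat{\vx}-\vx\Vert+E$.

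The main work is bounding $\Vert\hat{\vx}-\vx\Vert$, and the crux is a descent inequality. Using the $L$-strong convexity of the prox objective evaluated at $\vx$ gives $H(\hat{\vx})\le H(\vx)-\langle\tilde{\nabla}G(\vx),\hat{\vx}-\vx\rangle-L\Vert\hat{\vx}-\vx\Vert^2$, while $L$-smoothness of $G$ gives $G(\hat{\vx})\le G(\vx)+\langle\nabla G(\vx),\hat{\vx}-\vx\rangle+\frac{L}{2}\Vert\hat{\vx}-\vx\Vert^2$. Adding these and bounding the cross term $\langle\nabla G(\vx)-\tilde{\nabla}G(\vx),\hat{\vx}-\vx\rangle\le E\Vert\hat{\vx}-\vx\Vert\le ED$ (using $\hat{\vx},\vx\in\dom(H)$ and the diameter bound $\Vert\hat{\vx}-\vx\Vert\le D$) produces $F(\hat{\vx})\le F(\vx)+ED-\frac{L}{2}\Vert\hat{\vx}-\vx\Vert^2$, and in particular $F(\hat{\vx})-F^*\le (F(\vx)-F^*)+ED$.

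Finally, since $G$ is $\mu$-strongly convex and $H$ is convex, $F$ is $\mu$-strongly convex, so $\Vert\vx-\vx^*\Vert\le\sqrt{2(F(\vx)-F^*)/\mu}$ and $\Vert\hat{\vx}-\vx^*\Vert\le\sqrt{2(F(\hat{\vx})-F^*)/\mu}\le\sqrt{2((F(\vx)-F^*)+ED)/\mu}$. Combining the triangle inequality $\Vert\hat{\vx}-\vx\Vert\le\Vert\hat{\vx}-\vx^*\Vert+\Vert\vx-\vx^*\Vert$ with $\sqrt{a+b}\le\sqrt{a}+\sqrt{b}$ gives $\Vert\hat{\vx}-\vx\Vert\le 2\sqrt{2(F(\vx)-F^*)/\mu}+\sqrt{2ED/\mu}$, and substituting into $\dist(\vzero,\partial F(\hat{\vx}))\le 2L\Vert\hat{\vx}-\vx\Vert+E$ yields exactly \eqref{eq:subdiff_obj}. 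I expect the descent inequality, which must simultaneously exploit the strong convexity of the prox subproblem, the smoothness of $G$, and the inexact-gradient error while retaining the $ED$ term, to be the main obstacle; the remaining strong-convexity and triangle-inequality estimates are routine.
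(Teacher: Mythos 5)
Your proposal is correct and follows essentially the same route as the paper's proof: optimality of the prox step gives $\dist(\vzero,\partial F(\hat{\vx}))\le 2L\Vert\hat{\vx}-\vx\Vert+E$, the descent bound $F(\hat{\vx})\le F(\vx)+ED$ is established, and the triangle inequality through $\vx^*$ plus $\mu$-strong convexity yields \eqref{eq:subdiff_obj}. The only cosmetic difference is in the descent step, where you invoke $L$-strong convexity of the prox objective while the paper simply compares its value at $\hat{\vx}$ with its value at $\vx' = \vx$; both give $F(\hat{\vx})\le F(\vx)+ED$, and this step is in fact routine rather than the main obstacle you anticipated.
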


\begin{proof}
First, observe that
\begin{align}\label{eq:38.5}
F(\hat{\vx}) \le & \ G(\vx) + \langle \nabla G(\vx), \hat{\vx}-\vx \rangle + \frac{L}{2} \Vert \hat{\vx}-\vx \Vert^2 + H(\hat{\vx}) \nonumber\\
\le & \ G(\vx) + \langle \tilde{\nabla} G(\vx), \hat{\vx}-\vx \rangle + \frac{L}{2} \Vert \hat{\vx}-\vx \Vert^2 + H(\hat{\vx}) \nonumber\\ 
& + ED \nonumber\\
\le & \ G(\vx) + H(\vx) + ED \nonumber\\
= & \ F(\vx) + ED,
\end{align}
where in above, the first inequality follows from $L$ smoothness of $G$, and the third inequality follows from \eqref{eq:postproc0}.

Then, by the $\mu$-strong convexity of $G$, we have
\begin{equation}\label{eq:iterates_obj}
\frac{\mu}{2}\Vert \vx'-\vx^* \Vert^2 \le F(\vx')-F^*, \forall \vx' \in \RR^d.
\end{equation}
Furthermore, by \eqref{eq:postproc0}, we have
\begin{equation}\label{eq:postproc}
\vzero \in \tilde{\nabla} G(\vx) + L(\hat{\vx}-\vx) + \partial H(\hat{\vx}).
\end{equation}
Thus,
\begin{align*}
&\dist(0, \partial F(\hat{\vx}))\\ 
\le & \ \Vert \nabla G(\hat{\vx})-\nabla G(\vx) + \nabla G(\vx) - \tilde{\nabla}G(\vx)-L(\hat{\vx}-\vx) \Vert\\
\le & \ \Vert \nabla G(\hat{\vx})-\nabla G(\vx) \Vert + \Vert \nabla G(\vx) - \tilde{\nabla}G(\vx) \Vert \\ 
&+ \Vert L(\hat{\vx}-\vx) \Vert \\
\le & \ 2L \Vert \hat{\vx}-\vx \Vert + E\\
\le & \ 2L(\Vert \hat{\vx}-\vx^* \Vert + \Vert \vx-\vx^* \Vert) + E\\
\le & \ 2L\sqrt{\frac{2}{\mu}}(\sqrt{F(\hat{\vx})-F^*}+\sqrt{F(\vx)-F^*}) + E \\
\le & \ 4L\sqrt{\frac{2(F(\vx)-F^*)}{\mu}} + 2L\sqrt{\frac{2ED}{\mu}} + E
\end{align*}
where in above, the first inequality follows from \eqref{eq:postproc}, the third inequality follows from $L$ smoothness of $G$, the fifth inequality follows from \eqref{eq:iterates_obj}, and the last inequality uses \eqref{eq:38.5}.
\end{proof}

%%%%%%%%%%%%%%%%%%%%%%%%%%%%%%%%%%%%%%%%%%%%%%
Based on Theorem~\ref{thm:apcg_conv} and Theorem~\ref{thm:subdiff_obj} above, now we are ready to prove Theorem~\ref{thm:apcg_cplx_expected}. 

By Theorem~\ref{thm:apcg_conv}, \eqref{eq:small_grad_err}, and the definition of $T$, we have 
$$\EE[F(\vx^T)] - F^* \le \frac{\bar{\vareps}}{2}+\frac{\bar{\vareps}}{2} = \bar{\vareps}, $$
where $\bar{\vareps} = \frac{\mu}{512L^2}\vareps^2$. Combining above inequality with Theorem~\ref{thm:subdiff_obj} and \eqref{eq:small_grad_err}, we have
\begin{align*}
&\EE[ \dist(\vzero, \partial F(\hat{\vx}^T)) ] \\ 
&\le 4L\sqrt{\frac{2(F(\vx^T)-F^*)}{\mu}} + 2L\sqrt{\frac{2ED}{\mu}} + E \\
&\le \frac{\vareps}{4}+\frac{\vareps}{4} = \frac{\vareps}{2}.
\end{align*}
Thus, 
$$\EE[ \dist(\vzero, \tilde{\partial}F(\hat{\vx}^T)) ] \le \EE[ \dist(\vzero, \partial F(\hat{\vx}^T)) ] + E \le \frac{3\vareps}{4},$$
and Algorithm \ref{alg:apcg} must stop within $T$ iterations.
\iffalse
Hence, 
$$\EE[ \dist(\vzero, \partial F(\hat{\vx}^T)) ] \le \EE[ \dist(\vzero, \partial F(\hat{\vx}^T)) ] + E \le \vareps.$$
\fi
This completes the proof of Theorem~\ref{thm:apcg_cplx_expected}.
%%%%%%%%%%%%%%%%%%%%%%%%%%%%%%%%%%%%%%%%%%%%%%
\subsection{Proof of Theorem~\ref{thm:exp-cplx}} % Thm 3 pf:
Observe that 
\begin{align*}
\EE[K(\vareps)] = & ~ \sum_{k=1}^{\infty}k P(K(\vareps)=k)\\
\leq & ~ t+\sum_{k=t+1}^{\infty}kP(K(\vareps)=k),\forall t \in \ZZ^+.
\end{align*}
Note 
\begin{align*}
P(K(\vareps)=k)= & ~P(q_1>\vareps, \dots, q_{k-1}>\vareps, q_k \leq \vareps) \\
\leq & ~ P(q_{k-1}>\vareps) \leq \frac{\EE[q_{k-1}]}{\vareps} \leq \frac{C \eta^{k-1}}{\vareps}.
\end{align*}
Thus, 
\begin{align}\label{eq:E1}
\EE[K(\vareps)] \leq & ~ t+\sum_{k=t}^{\infty}(k+1)\frac{C\eta^k}{\vareps}\nonumber \\ 
= & ~ t+\frac{C}{\vareps} \sum_{k=t}^{\infty} (k+1) \eta^k \nonumber \\ 
= & ~ t+\frac{C}{\vareps}(\frac{\eta^t}{1-\eta} + \sum_{k=t}^{\infty}k\eta^k).
\end{align}
Let $S_t = \sum_{k=t}^{\infty} k \eta^k$. So $\eta S_t = \sum_{k=t}^{\infty} k\eta^{k+1}$, and
\begin{align*}
 S_t - \eta S_t &= \sum_{k=t}^{\infty} k\eta^k - \sum_{k=t}^{\infty} k\eta^{k+1} = t\eta^t + \sum_{k=t+1}^{\infty}\eta^k \\
 &= t\eta^t + \frac{\eta^{t+1}}{1-\eta}. 
\end{align*}
Thus $S_t = \frac{1}{1-\eta} (t\eta^t + \frac{\eta^{t+1}}{1-\eta})$.
Combining the above equation with \eqref{eq:E1}, we have $\forall t \in \ZZ^+$,
\begin{align*}
 \EE[K(\vareps)] &\leq t+\frac{C}{\vareps}(\frac{\eta^t}{1-\eta}+\frac{1}{1-\eta}(t\eta^t+\frac{\eta^{t+1}}{1-\eta})) \\ 
 &= t+\frac{C}{\vareps(1-\eta)}(t+\frac{1}{1-\eta})\eta^t. 
\end{align*}
Let $\psi(t) = t+\frac{C}{\vareps(1-\eta)}(t+\frac{1}{1-\eta})\eta^t$. 
Now we want to choose some $t \in \ZZ^+$ to bound $\psi(t)$ well. %In our applications, $\eta \in (0,1)$ would be the rate of linear convergence and is usually close to 1.\\ 
Here we choose $t = \lceil s \rceil$, where $s = \log_{\frac{1}{\eta}} [\frac{C}{\vareps (1-\eta)^2}]$. So we have $t \in [ s,s+1 )$ and $\eta^t \in (\eta^{s+1}, \eta^s] = ( \frac{\eta \vareps (1-\eta)^2}{C}, \frac{\vareps (1-\eta)^2}{C} ]$. 
Hence,
\begin{align*}
\EE[K(\vareps)] &\leq \psi(t)\\ 
&\leq s+1+\frac{C}{\vareps(1-\eta)} (s+1+\frac{1}{1-\eta}) \frac{\vareps(1-\eta)^2}{C}\\
&= (2-\eta)(s+1)+1\\
&= \frac{2-\eta}{\log \frac{1}{\eta}}\log\frac{C}{\vareps(1-\eta)^2} +3-\eta\\
&\le \frac{2-\eta}{1-\eta}\log\frac{C}{\vareps(1-\eta)^2} +3-\eta.
\end{align*}
%%%%%%%%%%%%%%%%%%%%%%%%%%%%%%%%%%%%%%%%%%%%%%
\subsection{Proof of Theorem~\ref{thm:ippm-compl}} % Thm 4 pf: AISTATS Thm 1
	Let $\Phi_t(\vx) := \Phi(\vx)+\rho \Vert \vx-\vx^t \Vert^2$ and $\Phi_t^*=\min_\vx\Phi_t(\vx)$ for each $t\ge0$. 
	Note we have $\dist(\vzero, \partial \Phi_t(\vx^{t+1})) \le \delta=\frac{\vareps}{4}$, and also $\Phi_t$ is $\rho$-strongly convex. Hence $\Phi_t(\vx^{t+1}) - \Phi_t^* \le \frac{\delta^2}{2\rho}$, and $\Phi(\vx^{t+1})+\rho\Vert\vx^{t+1}-\vx^t\Vert^2-\Phi(\vx^t) \le \frac{\delta^2}{2\rho}$. Thus,
	\begin{align}
	&\Phi(\vx^T)-\Phi(\vx^0)+\rho\sum_{t=0}^{T-1}\Vert \vx^{t+1}-\vx^t \Vert^2 \le \frac{T\delta^2}{2\rho} \nonumber\\
	&T\min_{0 \le t \le T-1} \Vert \vx^{t+1}-\vx^t \Vert^2 \le \frac{1}{\rho} \left(\frac{T \delta^2}{2\rho}+[\Phi(\vx^0)-\Phi(\vx^T)]\right) \nonumber\\
	&2\rho \min_{0 \le t \le T-1} \Vert \vx^{t+1}-\vx^t \Vert \le 2\sqrt{\frac{\delta^2}{2}+\frac{\rho [\Phi(\vx^0)-\Phi^*] }{T}}. \label{eq:0}
	\end{align}
	Since $T \ge \frac{32\rho}{\vareps^2}[\Phi(\vx^0)-\Phi^*]$ and $\delta = \frac{\vareps}{4}$, we have
	\begin{equation} \label{eq:2}
	\frac{\rho}{T}[\Phi(\vx^0)-\Phi^*] \le \frac{\vareps^2}{32},
	\end{equation}
	and thus \eqref{eq:0} implies
	\begin{equation}\label{2}
	2\rho \min_{0 \le t \le T-1} \Vert \vx^{t+1}-\vx^t \Vert \le \frac{\vareps}{2}.
	\end{equation}
	Therefore, the ZO-iPPM subroutine in Algorithm~\ref{alg:ialm} must stop within $T$ iterations, from its stopping condition, and when it stops, the output $\vx^S$ satisfies $2\rho \Vert \vx^{S}-\vx^{S-1} \Vert \le \frac{\vareps}{2}$.
	
	Now recall $\dist(0, \partial \Phi_t(\vx^{t+1})) \le \delta = \frac{\vareps}{4}$, i.e.,
	\begin{equation}\label{1}
	\dist(0, \partial \Phi(\vx^{t+1}) + 2\rho(\vx^{t+1}-\vx^t)) \le \frac{\vareps}{4}, \forall t \ge 0.
	\end{equation}
	The above inequality together with $2\rho \Vert \vx^{S}-\vx^{S-1} \Vert \le \frac{\vareps}{2}$ gives
	\begin{equation*}
	\dist(\vzero, \partial \Phi(\vx^{S})) \le \vareps,
	\end{equation*}
	which implies that $\vx^{S}$ is an $\vareps$-stationary point to \eqref{eq:nc_prob}.
	
	Finally, we apply Corollary~\ref{cor:apcg_cplx} to obtain the expected overall complexity and complete the proof.

%%%%%%%%%%%%%%%%%%%%%%%%%%%%%%%%%%%%%%%%%%%%%%
\subsection{Proof of Theorem~\ref{thm:ialm-cplx-2}} % Thm 5 pf: AISTATS Thm 3
To prove Theorem~\ref{thm:ialm-cplx-2}, first we bound the dual variable $\Vert \vy^k \Vert$, next we establish upper and lower bounds of the AL objective value inside every outer iteration, then we combine above results with Theorem \ref{thm:ippm-compl} and Corollary \ref{cor:apcg_cplx} to show the total query complexity to reach a near-KKT point, finally we establish the improved query complexity in the special case when the constraints are convex. Below, we present the detailed proof of Theorem~\ref{thm:ialm-cplx-2}.\\

	First, by \eqref{eq:alm-y}, $\vy^0 = \vzero$, and the definition of $w_k$ in Theorem~\ref{thm:ialm-cplx-2}, we have
	\begin{align} \label{yk-bound-general}
	\Vert \vy^{k} \Vert &\le \sum_{t=0}^{k-1} w_t \Vert \vc(\vx^{t+1}) \Vert = \sum_{t=0}^{k-1} M(t+1)^q =: y_k \nonumber\\ 
	&= O(k^{q+1}), \forall k \ge 0. 
	\end{align}
	Following the first part of the proof of Theorem 2 in \cite{li2021rate}, we can easily show that at most $K=O(\log \vareps^{-1})$ outer iALM iterations are needed to guarantee $\vx^{K}$ to be an $\vareps$-KKT point of \eqref{eq:ncp}. Hence, $\beta_k = O(\vareps^{-1}), \forall\, 0 \le k \le K$.
	
	Combining the above bound on $K$ with \eqref{yk-bound-general}, we have 
	\begin{align*}
	\Vert \vy^k \Vert &\le y_{K} := \sum_{t=0}^{K-1} M(K+1)^q = O(K^{q+1})\\ 
	&= O\big((\log \vareps^{-1})^{q+1}\big),\, \forall 1 \le k \le K.
	\end{align*}
	Hence from \eqref{eq:hat_rho_k}, we have $\hat{\rho}_k = O(\beta_k) = O(\vareps^{-1}), \hat{L}_k = O(\beta_k) = O(\vareps^{-1}),\, \forall\, 0 \le k \le K$. 
	
	Notice that equations (41) and (42) in \cite{li2021rate} still hold with $y_{\max}$ replaced by $y_k$. Hence,  $\forall k \le K, \forall \vx \in \dom(h),$
	\begin{equation*}
	\cL_{\beta_k}(\vx^k,\vy^k) - \cL_{\beta_k}(\vx,\vy^k) = O\left(y_k\left(1+\frac{y_k}{\beta_k}\right)\right).
	\end{equation*}
	The above equation together with Theorem \ref{thm:ippm-compl} gives that for any $k \le K$, at most $T_k^{\mathrm{PPM}}$ iPPM iterations are needed to terminate the ZO-iPPM subroutine in Algorithm~\ref{alg:ialm} at the $k$-th outer iALM iteration, where
	\begin{align*}
	T_k^{\mathrm{PPM}} &= \left\lceil \frac{32\hat{\rho}_k}{\vareps^2}\big(\cL_{\beta_k}(\vx^k,\vy^k)-\min_{\vx} \cL_{\beta_k}(\vx,\vy^k)\big) \right\rceil\\ 
	&= O \left(\frac{\hat{\rho}_k y_k\left(1+\frac{y_k}{\beta_k}\right)}{\vareps^2} \right).
	\end{align*}
	Also, by Corollary \ref{cor:apcg_cplx}, at most $T_k^{\mathrm{APCU}}$ function value queries are needed to terminate Algorithm \ref{alg:apcg}, where
	\begin{equation*}
	\EE[ T_k^{\mathrm{APCU}} ] = \tilde{O}\left(d\sqrt{\frac{\hat{L}_k}{\hat{\rho_k}}} \right), \forall k \ge 0.
	\end{equation*}
	
	Therefore, for all $k \le K$, 
	\begin{align*}
	\EE[ T_k^{\mathrm{PPM}} T_k^{\mathrm{APG}} ] &= \tilde{O}\left( \frac{d\sqrt{\hat{L}_k \hat{\rho}_k}}{\vareps^2} y_k\left(1+\frac{y_k}{\beta_k}\right) \right) \\
	&= \tilde{O}\left( \frac{d y_k }{\vareps^2} (\beta_k + y_k) \right) \\
	&= \tilde{O}\left( \frac{d k^{q+1} }{\vareps^2} (\sigma^k + k^{q+1}) \right)\\
	& = \tilde{O}\left( \frac{d K^{q+1} }{\vareps^2} (\sigma^K + K^{q+1}) \right)\\
	%&= \tilde{O}\left( \frac{d (\log \vareps^{-1})^{q+1}}{\vareps^2} \left(\frac{1}{\vareps} + (\log \vareps^{-1})^{q+1}\right) \right)\\
	&= O\left( \frac{d (\log \vareps^{-1})^{q+2}}{\vareps^3} \right)\\
	&= \tilde{O}\left( \frac{d }{\vareps^3} \right),
	\end{align*}
	where the second equation is from $\hat{L}_k = O(\beta_k)$ and $\hat{\rho}_k = O(\beta_k)$, and the fifth one is obtained by $K = O(\log \vareps^{-1})$.
	
	Consequently, for a general nonlinear $\vc(\cdot)$, at most $T$ function value queries in total are needed to find the $\vareps$-KKT point $\vx^K$, where
	\begin{align*}
	\EE[T] &= \sum_{k=0}^{K-1} \EE[ T_k^{\mathrm{PPM}} T_k^{\mathrm{APG}} ] \\ 
	&= \tilde{O}\left(dK \vareps^{-3} (\log \vareps^{-1})^{q+2} \right)  =\tilde{O}\left(d\vareps^{-3} \right).
	\end{align*}
	
	In the special case when $\vc(\vx) = \vA \vx-\vb$, the term $\Vert \vc(\vx) \Vert^2 = \Vert \vA \vx-\vb \Vert^2$ is convex, so we have $\rho_c = 0$. Hence, by \eqref{eq:hat_rho_k}, $\hat{\rho}_k = \tilde{O}(1), \forall k \ge 0 $. Then following the same arguments as above, we obtain that for any $k \le K$,
	\begin{align*}
	\EE [ T_k^{\mathrm{PPM}} T_k^{\mathrm{APG}} ]&= O\left(\frac{\sqrt{\hat{L}_k \hat{\rho}_k}}{\vareps^{2}} (\log \vareps^{-1})^{q+2}\right)\\ 
	&= \tilde{O}\left(\vareps^{-\frac{5}{2}} \right).
	\end{align*}
	Therefore, at most $T$ total function value queries are needed to find the $\vareps$-KKT point $\vx^K$, where
	\begin{equation*}
	\EE[T] = \sum_{k=0}^{K-1} \EE[ T_k^{\mathrm{PPM}} T_k^{\mathrm{APG}} ]= \tilde{O}\left(\vareps^{-\frac{5}{2}} \right),
	\end{equation*}
	which completes the proof.

%%%%%%%%%%%%%%%%%%%%%%%%%%%%%%%%%%%%%%%%%%%%%%
\section{Efficient Implementation of Algorithm \ref{alg:apcg}}
In this section, we provide Algorithm \ref{alg:apcg_eff}, which is a practical efficient implementation of the equivalent Algorithm \ref{alg:apcg}. %\textcolor{red}{(argue the equivalence and efficiency)} 
Algorithm \ref{alg:apcg_eff} is efficient in the sense that it avoids the full-dimensional vector operations which exist in Algorithm \ref{alg:apcg}. Algorithm \ref{alg:apcg_eff} is equivalent to Algorithm \ref{alg:apcg} because their iterates satisfy the relations
\begin{align*}
    \vx^k &= \rho^k \vu^k + \vv^k, \\
    \vy^k &= \rho^{k+1} \vu^k + \vv^k, \\
    \vz^k &= -\rho^k \vu^k + \vv^k,
\end{align*}
which are proven in Proposition 1 of \cite{lin2014accelerated}.

\begin{algorithm}[h] % efficient implementation of APCG, alg 4%
	\caption{Efficient implementation of ZO-APCG for \eqref{eq:comp-prob}}\label{alg:apcg_eff}
	\DontPrintSemicolon
	\textbf{Input:} $\vx^{-1} \in \dom(\psi)$, tolerance $\vareps$, smoothness $L$, strong convexity $\mu$, and epoch length $l$. \;
	\textbf{Initialization:} $\vu^0 = \vzero, \vv^0 = \vx^0, \alpha = \frac{1}{d}\sqrt{\frac{\mu}{L}}, \rho = \frac{1-\alpha}{1+\alpha}$ \;
	\For{$k=0,1,\ldots, K-1$}{
		Sample $i_k \in [d]$ uniformly and compute
		$\tilde{\nabla}_{i_k} G(\vy^k), \st \Vert \tilde{\nabla}_{i_k} G(\vy^k) - \nabla_{i_k} G(\vy^k) \Vert \le E_{i_k}.$\;
		Compute
		$\vh^k_{i_k} = \argmin_{\vh \in \RR^{d_{i_k}}} \{ \frac{d\alpha L}{2} \Vert \vh \Vert^2 + \langle \tilde{\nabla}_{i_k} G(\rho^{k+1}\vu^k+\vv^k),\vh \rangle + H_{i_k}(-\rho^{k+1} \vu_{i_k}^k+\vv_{i_k}^k+\vh) \}$. \;
		$\vu^{k+1} = \vu^k, \vv^{k+1} = \vv^k$,
		$\vu^{k+1}_{i_k} = \vu^k_{i_k} - \frac{1-d\alpha}{2\rho^{k+1}}\vh^k_{i_k}, \vv^{k+1}_{i_k} = \vv^k_{i_k} + \frac{1+d\alpha}{2} \vh^k_{i_k}$.\;
		$\vx^{k+1} = \rho^{k+1}\vu^{k+1}+\vv^{k+1}$.\;
		\If{$k+1 \equiv 0 \pmod l$}{ %then go to step 5a. Else go to step 1 of the next iteration.
		 Compute $\tilde{\nabla} G(\vx^{k+1}), \st \Vert \tilde{\nabla} G(\vx^{k+1}) - \nabla G(\vx^{k+1}) \Vert \le E$\; 
		 $\hat{\vx}^{k+1} = \argmin_{\vx \in \RR^d} \{ \langle \tilde{\nabla} G(\vx^{k+1}), \vx-\vx^{k+1} \rangle + \frac{L}{2} \Vert \vx-\vx^{k+1} \Vert^2 + H(\vx) \}$.\;
		 \textbf{Return} $\hat{\vx}^{k+1}$ and \textbf{stop} if $\dist (\vzero, \tilde{\partial} F(\hat{\vx}^{k+1})) \le \frac{3\vareps}{4}$.
		 }
		%5c. Return $\hat{\vx}^{k+1}$ and stop if $\dist (\vzero, \tilde{\partial} F(\hat{\vx}^{k+1})) \le \vareps - E$. % implied by above.
		%5c. Return $\hat{\vx}^{k+1}$ and stop if $\dist (\vzero, \tilde{\partial} F(\hat{\vx}^{k+1})) \le \vareps + 2L\sqrt{\frac{2ED}{\mu}}+2E$.
	}
	
\end{algorithm}
%%%%%%%%%%%%%%%%%%%%%%%%%%%%%%%%%%%%%%%%%%%%%%
\section{Additional Numerical Experiments}
%In this section, we provide additional numerical experiments including the adversarial example generation problem, nonconvex quadratically constrained quadratic programs (QCQP), and nonconvex linearly constrained quadratic programs (LCQP) to demonstrate the empirical performance of the proposed ZO-iALM. All the tests were performed in MATLAB 2019b on a Macbook Pro with 4 cores and 16GB memory.
In this section, we provide additional numerical experiments to demonstrate the empirical performance of the proposed ZO-iALM. All the tests were performed in MATLAB 2019b on a Macbook Pro with 4 cores and 16GB memory.

%%%%%%%%%%%%%%%%%%%%%%%%%%%%%%%%%%%%%%%%%%%%%%
\subsection{Nonconvex Linearly Constrained Quadratic Programs (LCQP)}\label{subsec:qp}
In this subsection, we test the proposed method on solving nonconvex LCQP:
\begin{equation}\label{eq:ncQP} %\small
\begin{aligned}
&\textstyle \min_{\vx \in \mathbb{R}^n}  \frac{1}{2} \vx^\top \vQ \vx + \vc^\top  \vx,\\ 
&\text{s.t. } \vA\vx=\vb,\ x_i \in [l_i,u_i],\,\forall\, i\in [n],
\end{aligned}
\end{equation} \normalsize
where $\vA \in \mathbb{R}^{m\times n}$, and $\vQ\in\RR^{n\times n}$ is symmetric and indefinite (thus the objective is nonconvex). In the test, we generated all data randomly. The smallest eigenvalue of $\vQ$ is $-\rho < 0$, and thus the problem is $\rho$-weakly convex. For all tested instances, we set $l_i=-5$ and $u_i=5$ for each $i\in [n]$.

We generated an LCQP instance with $m = 10$, $n = 100$, and $\rho = 1$.
%In each group, we generated $10$ instances of LCQP with $\rho = 1$. 
Since no other existing ZOMs are able to handle nonconvex constrained problems, we compare our proposed ZO-iALM to two other methods that replace our ZO-iPPM subroutine with ZO-AdaMM \cite{liu2018zeroth} and ZO-ProxSGD \cite{ghadimi2016mini} respectively. 
We set $\beta_k=\sigma^k\beta_0$ with $\sigma=3$ and $\beta_0=0.01$ for all iALM outer loops. In each method, we set $a = 10^{-4}$ to be the sampling radius. In ZO-AdaMM, we set $\alpha = 1, \beta_1 = 0.75, \beta_2 = 1$. In ZO-ProxSGD, we fix the step size to be $\frac{1}{nL}$, where $L$ is the smoothness constant of each subproblem.%For the subsolver of the iALM by \cite{sahin2019inexact}, we set its step size to $\frac{1}{2 \Vert \vQ+\beta_k \vA^{\top}\vA \Vert}$ for the $k$-th outer iteration, as specified by~\cite{ghadimi2016accelerated}. %target error 
The tolerance was set to $\vareps = 10^{-3}$ for the proposed ZO-iALM and $\vareps = 0.5$ for all compared methods since they could not converge with a tolerance as low as $0.001$. We also conducted experiments replacing our ZO-APCU inner solver with ZO-ARS in \cite{nesterov2017random}, but ZO-ARS in \cite{nesterov2017random} failed to converge.

In Table \ref{table:qp}, we report, for each method, the primal residual, dual residual, running time (in seconds), and the number of queries, shortened as \verb|pres|, \verb|dres|, \verb|time|, and \verb|#Obj|. 

From the results, we conclude that, to reach an $\vareps$-KKT point to the black-box LCQP problem, the proposed ZO-iALM needs significantly fewer queries to reach a significantly higher accuracy than all other compared methods.

\begin{table}[h]\caption{Results by the proposed ZO-iALM with ZO-iPPM, the ZO-AdaMM in \cite{chen2019zo}, and the ZO-ProxSGD in \cite{ghadimi2016mini} on solving a black-box $1$-weakly convex LCQP \eqref{eq:ncQP} of size $m=10$ and $n=100$. }\label{table:qp} 
\begin{center}
\resizebox{0.48 \textwidth}{!}{ % < 1.
%\begin{tabular}{|c||cccc|cccc|cccc|cccc|} 
\begin{tabular}{|c||cccc|cccc|cccc|} 
\hline
method & pres & dres & time & \#Obj
\\\hline\hline 
ZO-iALM & 9.61e-4 & 6.83e-4 & 11.42 & 2344400 \\ ZO-AdaMM & 0.42 & 0.47 & 61.86 & 8060000 \\
ZO-ProxSGD & 0.24 & 0.49 & 86.98 & 16520000 \\\hline %\\ ZP-ARS in \cite{nesterov2017random} & 3.52e-4 & 8.20e-4 & 6.08 & 493948 \\\hline
\end{tabular}
}
\end{center}
\end{table}

\iffalse
\begin{table}\caption{Results by the proposed ZO-iALM with ZO-iPPM, the ZO-AdaMM in \cite{chen2019zo}, and the ZO-ProxSGD in \cite{ghadimi2016mini} on solving a black-box $1$-weakly convex LCQP \eqref{eq:ncQP} of size $m=10$ and $n=100$. }\label{table:qp} 
\begin{center}
\resizebox{0.48 \textwidth}{!}{ % < 1.
%\begin{tabular}{|c||cccc|cccc|cccc|cccc|} 
\begin{tabular}{|c||cccc|cccc|cccc|} 
\hline
method & pres & dres & time & \#Obj
\\\hline\hline 
ZO-iALM & 0.17 & 0.41 & 1.70 & 217000 \\ ZO-AdaMM in \cite{chen2019zo} & 0.42 & 0.47 & 61.86 & 8060000 \\
ZO-ProxSGD in \cite{ghadimi2016mini} & 0.24 & 0.49 & 86.98 & 16520000 \\\hline %\\ ZP-ARS in \cite{nesterov2017random} & 3.52e-4 & 8.20e-4 & 6.08 & 493948 \\\hline
\end{tabular}
}
\end{center}
\end{table}
\fi

%%%%%%%%%%%%%%%%%%%%%%%%%%%%%%%%%%%%%%%%%%%%%%
\subsection{Unconstrained Strongly-convex Quadratic Programs (USCQP)}\label{subsec:uscqp}
In this subsection, we test the proposed core subsolver ZO-APCU on solving USCQP:
\begin{equation}\label{eq:uscqp} %\small
\begin{aligned}
&\textstyle \min_{\vx \in \mathbb{R}^n}  \frac{1}{2} \vx^\top \vQ \vx + \vc^\top  \vx,\\ 
\end{aligned}
\end{equation} \normalsize
where $\vQ\in\RR^{n\times n}$ is symmetric with $\mu > 0$ as its smallest eigenvalue (thus the objective is $\mu$ strongly-convex). In the test, we generated all data randomly. 

We generated an USCQP instance with $n = 100$, and $\mu = 1$. We compare our proposed ZO-APCU to ZO-AdaMM \cite{chen2019zo} and ZO-ARS \cite{nesterov2017random} respectively. In each method, we set $\vareps = 10^{-3}$ to be the error tolerance and $a = 10^{-5}$ to be the sampling radius. In ZO-AdaMM, we set $\alpha = 1, \beta_1 = 0.75, \beta_2 = 1$. In ZO-ProxSGD, we fix the step size to be $\frac{1}{nL}$, where $L$ is the smoothness constant of each subproblem. In ZO-ARS, we set $\theta = \frac{1}{16L(n+4)^2}, h = \frac{1}{4L(n+4)}, \alpha = \sqrt{L\theta}$, where $L$ is the smoothness constant of the objective.

In Figure \ref{fig:uscqp}, we compare the objective error trajectories of our method, ZO-AdaMM in \cite{chen2019zo}, and ZO-ARS in \cite{nesterov2017random}. For each method, we also report the objective error, gradient norm, running time (in seconds), and the query count, shortened as \verb|objErr|, \verb|normGrad|, \verb|time|, and \verb|#Obj| in Table \ref{table:uscqp}. From the results, we conclude that the proposed ZO-APCU reaches an $\vareps$-stationary point to the USCQP problem \eqref{eq:uscqp} with at least $3$ times fewer number of queries compared to all other methods. 

\begin{figure}[h]
	\begin{center}
			Unconstrained Strongly-convex Quadratic Programs \eqref{eq:uscqp} \\
	\includegraphics[width=0.3\textwidth]{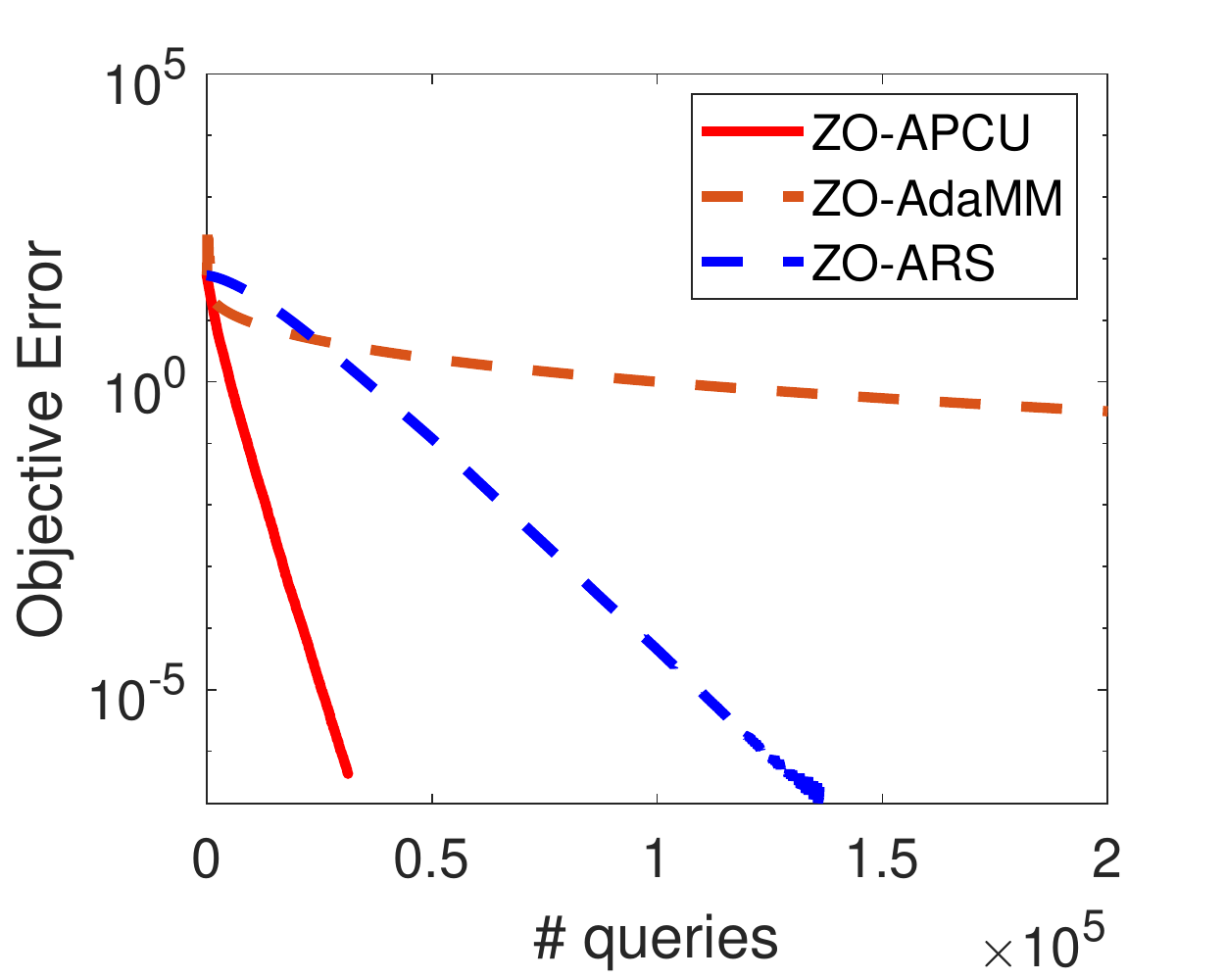}
	\end{center}
		\caption{Comparison of our proposed ZO-APCU, ZO-AdaMM in \cite{chen2019zo}, and ZO-ARS in \cite{nesterov2017random}. The plot shows the objective error.}\label{fig:uscqp}
\end{figure}

\begin{table}[h]\caption{Results by the proposed ZO-APCU, the ZO-AdaMM in \cite{chen2019zo}, and the ZO-ProxSGD in \cite{ghadimi2016mini} on solving the unconstrained strongly-convex quadratic programs \eqref{eq:uscqp}. }\label{table:uscqp} 
\begin{center}
\resizebox{0.48 \textwidth}{!}{ % < 1.
%\begin{tabular}{|c||cccc|cccc|cccc|cccc|} 
\begin{tabular}{|c||cccc|cccc|cccc|} 
\hline
method & objErr & normGrad & time & \#Obj
\\\hline\hline 
ZO-APCU & 4.29e-7 & 1.00e-3 & 0.50 & 31400 \\ ZO-AdaMM & 4.93e-7 & 9.99e-4 & 52.17 & 11576470 \\
ZO-ARS & 1.80e-7 & 4.96e-4 & 3.68 & 136200 \\\hline 
\end{tabular}
}
\end{center}
\end{table}

%%%%%%%%%%%%%%%%%%%%%%%%%%%%%%%%%%%%%%%%%%%%%%
\subsection{Logistic Regression (LR)}\label{subsec:num_multi_pt}
In this subsection, we compare different multi-point coordinate gradient estimators proposed in Section \ref{sec:multi_pt} in the high accuracy setting. We use the proposed subsolver ZO-APCU on solving the logistic regression problem:
\begin{equation}\label{eq:LR} %\small
\min_{\vw,b} \frac{1}{N}\sum_{i=1}^N \log\big(1+\exp[-y_i(\vw^\top\vx_i+b)]\big) + \frac{\lambda}{2}\|\vw\|_2^2 + \frac{\lambda}{2}b^2, \\
\end{equation} \normalsize
where we are given the training data $\{(\vx_i,y_i)\}_{i=1}^N$ with $y_i\in\{+1,-1\}$ for each $i=1,\ldots,N$. Note that $\lambda$ is the strong convexity constant of the objective function. In the test, we use the \textit{spamdata} data set \cite{Dua:2019} with $N = 100$ as the number of randomly chosen data points and $n = 57$ as the variable dimension. In this subsection, we run two independent tests.

In the first test, we compare the final accuracy using our proposed ZO-APCU with 2-point and 4-point coordinate gradient estimators respectively. In each method, we set $\lambda = 1$ as the strong convexity constant, $\vareps = 10^{-11}$ as the error tolerance, $a = 10^{-5}$ as the sampling radius, and $K = 114000$ to be the maximum number of queries.

In Figures \ref{fig:LR_4pt_query} and \ref{fig:LR_4pt_iter}, we compare the gradient norm trajectories of the proposed ZO-APCU under 2-point and 4-point settings. For each setting, we also report the gradient norm, running time (in seconds), and the query count, shortened as \verb|normGrad|, \verb|time|, and \verb|#Obj| in Table \ref{table:LR_4pt}. From the results, we conclude that using the 4-point gradient estimator enables ZO-APCU to reach a more accurate solution to the LR problem \eqref{eq:LR} than using the 2-point gradient estimator.

\begin{figure}[h]
	\begin{center}
			Logistic Regression \eqref{eq:LR} \\
	\includegraphics[width=0.3\textwidth]{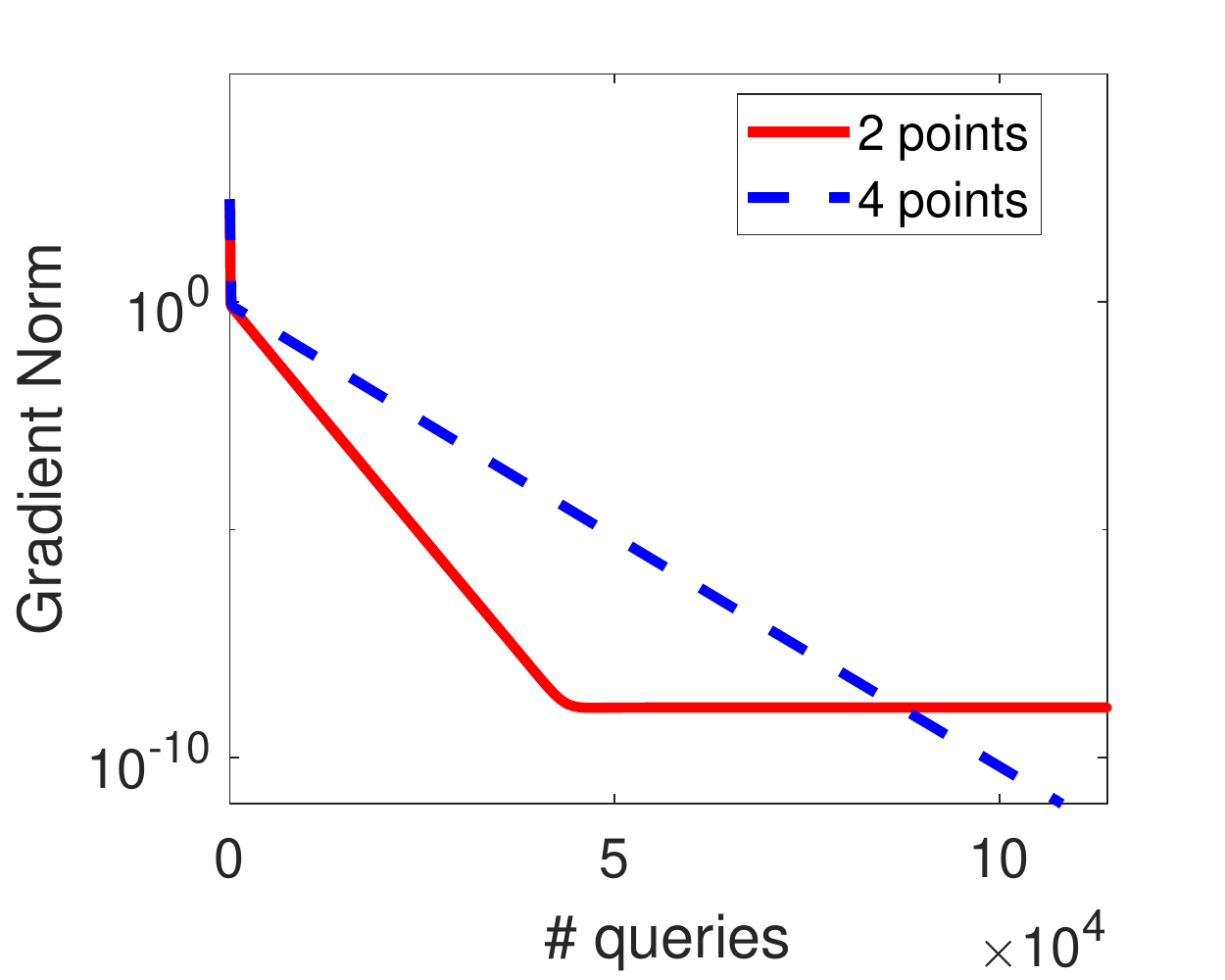}
	\end{center}
		\caption{Comparison of ZO-APCU with 2-point and 4-point gradient estimators. The plot shows the gradient norm versus the query count. The sampling radius is $a = 10^{-5}$.}\label{fig:LR_4pt_query}
\end{figure}
\begin{figure}[h]
	\begin{center}
			Logistic Regression \eqref{eq:LR} \\
	\includegraphics[width=0.3\textwidth]{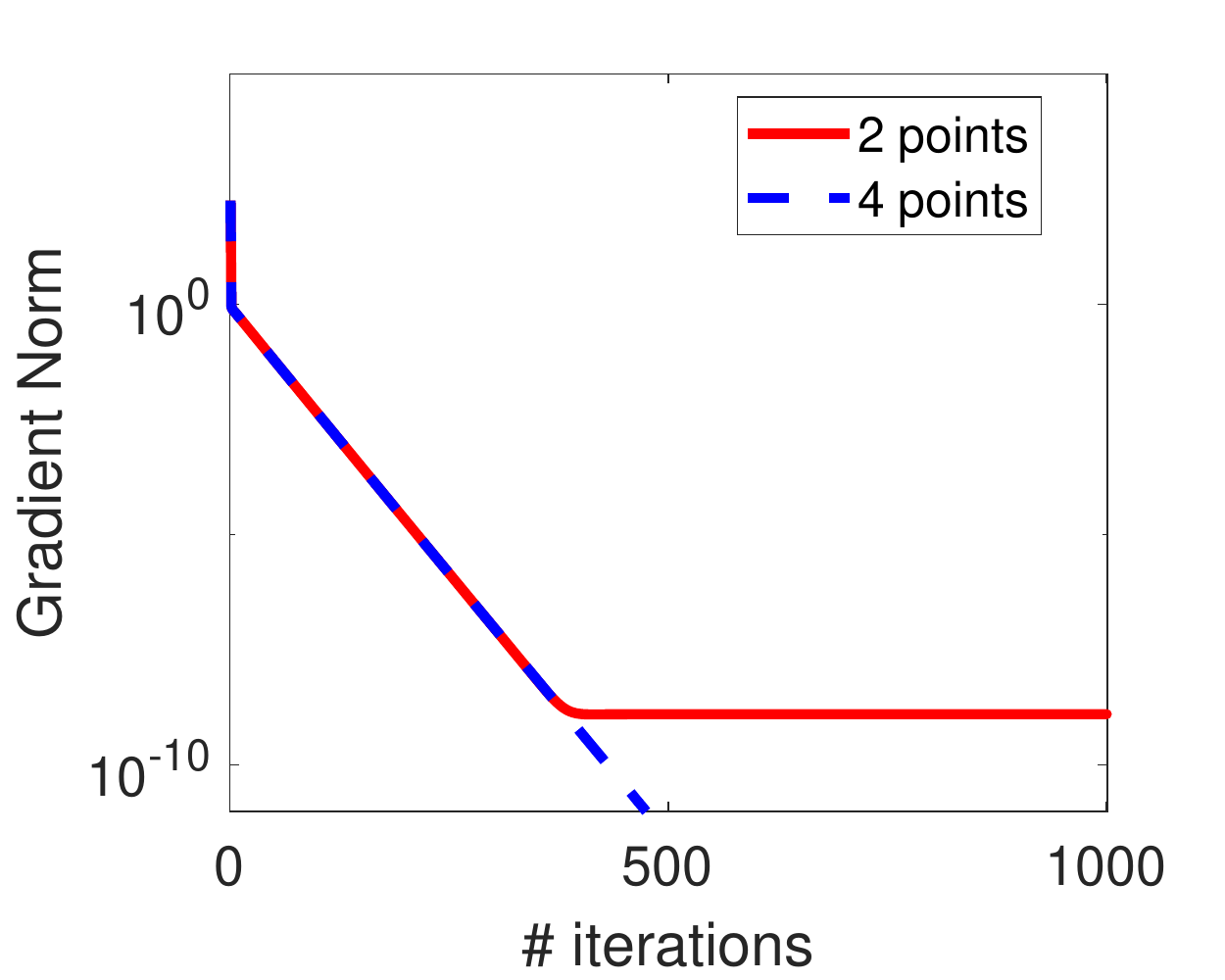}
	\end{center}
		\caption{Comparison of ZO-APCU with 2-point and 4-point gradient estimators. The plot shows the gradient norm versus the iteration count. The sampling radius is $a = 10^{-5}$.}\label{fig:LR_4pt_iter}
\end{figure}

\begin{table}[h]\caption{Results by the proposed ZO-APCU with 2-point and 4-point gradient estimators respectively on solving the logistic regression problem \eqref{eq:LR}. }\label{table:LR_4pt} 
\begin{center}
%\resizebox{0.4 \textwidth}{!}{ % < 1.
\begin{tabular}{|c||ccc|ccc|} 
\hline
\#Points & normGrad & time & \#Obj
\\\hline\hline 
2-pt & 1.26e-9 & 3.12 & 114000 \\ 4-pt & 9.68e-12 & 1.38 & 108072 \\\hline 
\end{tabular}
%}
\end{center}
\end{table}

In the second test, we compare the final accuracy using our proposed ZO-APCU with 2-point, 4-point, and 6-point gradient estimators respectively, by a larger sampling radius $a = 10^{-2}$. In each method, we set $\lambda = 1$ as the strong convexity constant, $\vareps = 10^{-7}$ as the error tolerance, and $K = 114000$ to be the maximum number of queries.

In Figures \ref{fig:LR_6pt_query} and \ref{fig:LR_6pt_iter}, we compare the gradient norm trajectories of the proposed ZO-APCU under three settings. For each setting, we also report the gradient norm, running time (in seconds), and the query count, shortened as \verb|normGrad|, \verb|time|, and \verb|#Obj| in Table \ref{table:LR_6pt}. From the results, we conclude that when the sampling radius is large, to reach a decent accuracy to the LR problem \eqref{eq:LR}, it is beneficial to use more points in the gradient estimators.

\begin{figure}[t]
	\begin{center}
			Logistic Regression \eqref{eq:LR} \\
	\includegraphics[width=0.3\textwidth]{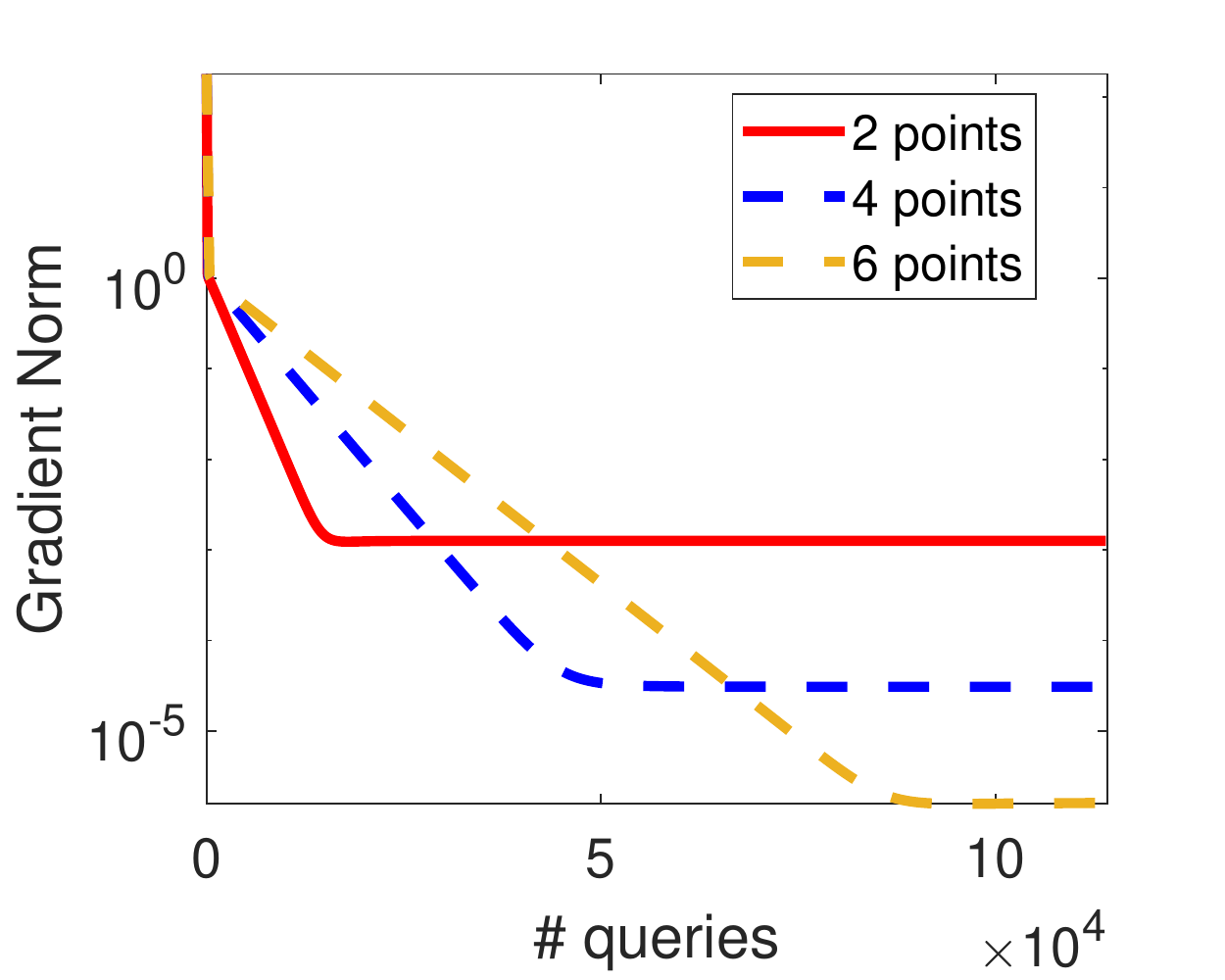}
	\end{center}
		\caption{Comparison of ZO-APCU with $(2,4,6)$-point gradient estimators. The plot shows the gradient norm versus the query count. The sampling radius is $a = 10^{-2}$. }\label{fig:LR_6pt_query}
\end{figure}
\begin{figure}[t]
	\begin{center}
			Logistic Regression \eqref{eq:LR} \\
	\includegraphics[width=0.3\textwidth]{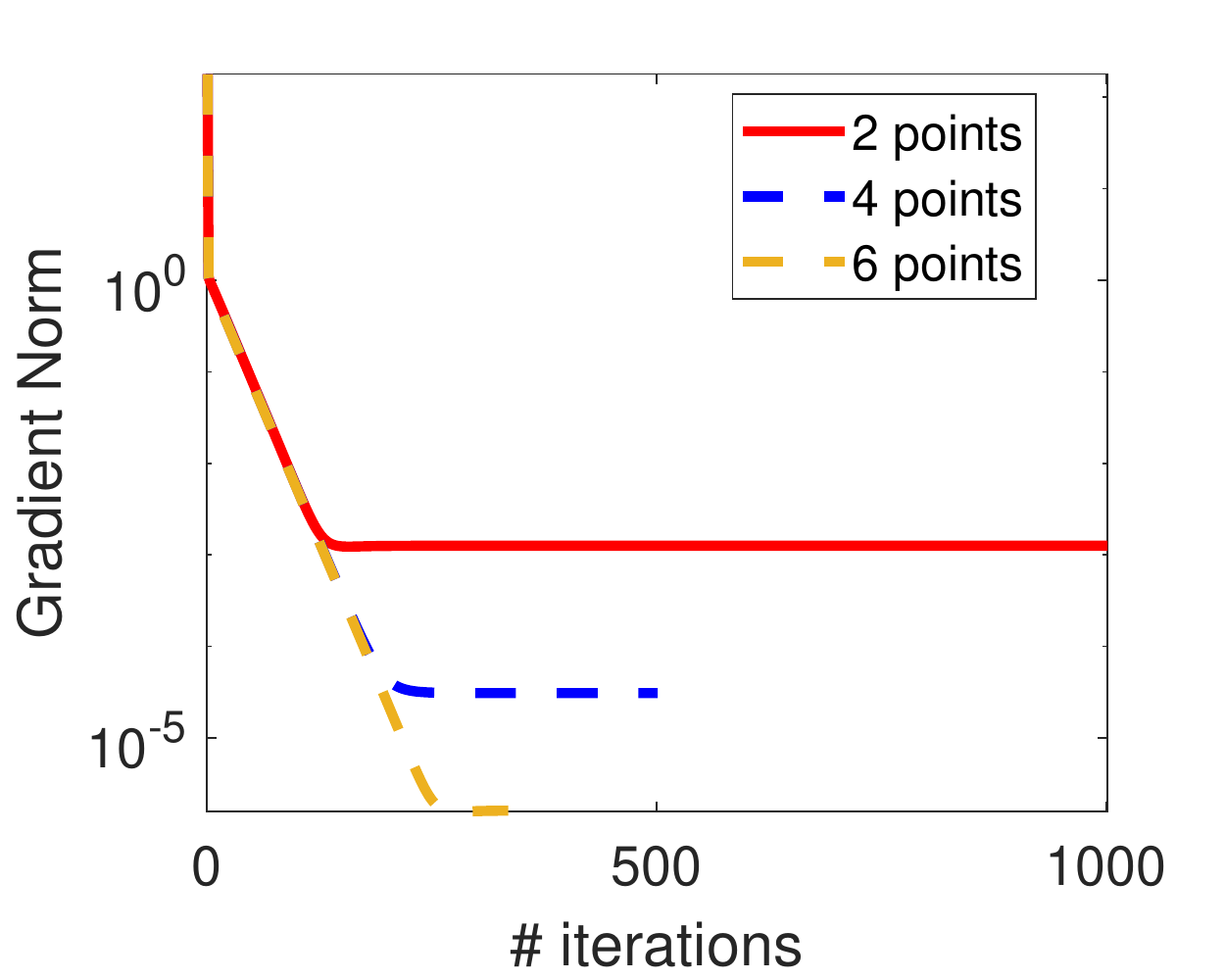}
	\end{center}
		\caption{Comparison of ZO-APCU with $(2,4,6)$-point gradient estimators. The plot shows the gradient norm versus the iteration count. The sampling radius is $a = 10^{-2}$.}\label{fig:LR_6pt_iter}
\end{figure}

\begin{table}[h]\caption{Results by the proposed ZO-APCU with $(2,4,6)$-point gradient estimators respectively on solving the logistic regression problem \eqref{eq:LR}. }\label{table:LR_6pt} 
\begin{center}
%\resizebox{0.4 \textwidth}{!}{ % < 1.
\begin{tabular}{|c||ccc|ccc|ccc|} 
\hline
\#Points & normGrad & time & \#Obj
\\\hline\hline 
2-pt & 1.3e-3 & 2.71 & 114000 \\ 4-pt & 3.08e-5 & 1.38 & 114000 \\ 6-pt & 1.60e-6 & 1.12 & 114000 \\\hline 
\end{tabular}
%}
\end{center}
\end{table}

%%%%%%%%%%%%%%%%%%%%%%%%%%%%%%%%%%%%%%%%%%%%%%
\section{Additional Table}
In Table~\ref{table:sensor}, for the resource allocation problem in sensor networks \eqref{eq:sensor_sel}, we report the primal residual, dual residual, running time (in seconds), and the query count, shortened as \verb|pres|, \verb|dres|, \verb|time|, and \verb|#Obj|.

% eps = 0.1:
\begin{table}[h]\caption{Results by the proposed ZO-iALM with ZO-iPPM, the ZO-AdaMM in \cite{chen2019zo}, and the ZO-ProxSGD in \cite{ghadimi2016mini} on solving the resource allocation problem in sensor networks \eqref{eq:sensor_sel}. }\label{table:sensor} % NOT UPDATED!
\begin{center}
%\resizebox{0.48 \textwidth}{!}{ % < 1.
%\begin{tabular}{|c||cccc|cccc|cccc|cccc|} 
\begin{tabular}{|c||cccc|cccc|cccc|} 
\hline
method & pres & dres & time & \#Obj
\\\hline\hline 
ZO-iALM & 4.86e-2 & 7.01e-2 & 53.21 & 303790 \\ ZO-AdaMM & 2.11e-2 & 5.24e-2 & 80.09 & 659340 \\
ZO-ProxSGD & 4.97e-2 & 5.14e-2 & 427.38 & 3590277 \\\hline 
\end{tabular}
%}
\end{center}
\end{table}

% eps = 0.5:
\iffalse
\begin{table}\caption{Results by the proposed ZO-iALM with ZO-iPPM, the ZO-AdaMM in \cite{chen2019zo}, and the ZO-ProxSGD in \cite{ghadimi2016mini} on solving the resource allocation problem in sensor networks \eqref{eq:sensor_sel}. }\label{table:sensor_05} % NOT UPDATED!
\begin{center}
%\resizebox{0.48 \textwidth}{!}{ % < 1.
%\begin{tabular}{|c||cccc|cccc|cccc|cccc|} 
\begin{tabular}{|c||cccc|cccc|cccc|} 
\hline
method & pres & dres & time & \#Obj
\\\hline\hline 
ZO-iALM & 0.24 & 0.31 & 20.37 & 103690 \\ ZO-AdaMM & 0.38 & 0.22 & 30.59 & 338520 \\
ZO-ProxSGD & 0.23 & 1.28e-3 & 32.86 & 341884 \\\hline 
\end{tabular}
%}
\end{center}
\end{table}
\fi % delete in main paper submission
	%%%%%%%%%%%%%%%%%%%%%%%%%%%%%%%%%%%
	
\end{document}